\theoremstyle{plain}
\newtheorem{theorem}{Theorem}[section]
\newtheorem{proposition}[theorem]{Proposition}
\newtheorem{lemma}[theorem]{Lemma}
\theoremstyle{definition}
\theoremstyle{remark}
\newtheorem{remark}{Remark}[section]
\numberwithin{equation}{section}
\numberwithin{theorem}{section}
\numberwithin{figure}{section}
\newcommand{\mc}[1]{{\mathcal #1}}
\newcommand{\bs}[1]{{\boldsymbol #1}}
\newcommand{\bb}[1]{{\mathbb #1}}
\newcommand{\rme}{\mathrm{e}}
\newcommand{\rmd}{\mathrm{d}}
\newcommand{\eps}{\varepsilon}
\newcommand{\id}{{1 \mskip -5mu {\rm I}}}
\newcommand{\supp}{\mathop{\rm supp}\nolimits}
\title[Leapfrogging vortex rings as scaling limit of Euler Equations]{Leapfrogging vortex rings as scaling limit of Euler Equations}
\author[P.\ Butt\`a]{Paolo Butt\`a \orcidlink{0000-0002-3193-4282}}
\address{Dipartimento di Matematica\\
Sapienza Universit\`a di Roma\\
P.le Aldo Moro 5, 00185 Roma\\
Italy}
\email{butta@mat.uniroma1.it}
\author[G.\ Cavallaro]{Guido Cavallaro \orcidlink{0000-0001-5668-370X}}
\address{Dipartimento di Matematica\\
Sapienza Universit\`a di Roma\\
P.le Aldo Moro 5, 00185 Roma\\
Italy}
\email{cavallar@mat.uniroma1.it}
\author[C.\ Marchioro]{Carlo Marchioro}
\address{Dipartimento di Matematica\\
Sapienza Universit\`a di Roma\\
P.le Aldo Moro 5, 00185 Roma\\
Italy\\
and\\
International Research Center M\&MOCS\\ 
Universit\`a di L'Aquila\\
Palazzo Caetani\\
04012 Cisterna di Latina (LT)\\
Italy}
\email{marchior@mat.uniroma1.it}
\keywords{Incompressible Euler flow, vortex rings, leapfrogging.}
\subjclass{
76B47, 
37N10. 
}
\date{}
\begin{document}

\begin{abstract}
We consider an incompressible fluid with axial symmetry without swirl, assuming initial data such that the initial vorticity is very concentrated inside $N$ small disjoint rings of thickness $\eps$, each one of vorticity mass and main radius of order $|\log\eps|$. When $\eps \to 0$, we show that, at least for small but positive times, the motion of the rings converges to a dynamical system firstly introduced in \cite{MN}. In the special case of two vortex rings with large enough main radius, the result is improved reaching longer times, in such a way to cover the case of several overtakings between the rings, thus providing a mathematical rigorous derivation of the leapfrogging dynamics.
\end{abstract}

\maketitle

\section{Introduction}
\label{sec:1}

We study the time evolution of an incompressible non viscous fluid in the whole space $\bb R^3$, in case of axial symmetry without swirl, when the initial vorticity is supported and sharply concentrated in $N$ annulii of large radius (i.e., distance from the symmetry axis) of leading term $\alpha|\log\eps|$ ($\alpha>0$ fixed), thickness of order $\eps$, vorticity mass of order $|\log\eps|$, and finite distance from each other. We are interested in considering the time evolution of such configuration in the limit $\eps \to 0$. In a previous paper of some years ago, \cite{MN}, the same problem was investigated, showing that for $N=1$ the vorticity remains concentrated for $t>0$ in an annulus with the same distance from the symmetry axis and thickness $\rho(\eps)$ (with $\rho(\eps)\to 0$ as $\eps\to 0$), moving with a constant speed along the symmetry axis. The case in which many coaxial vortex rings interact each other remained an open problem, and it was conjectured in \cite{MN} that in the limit $\eps\to 0$ the motion of the rings (parameterized throughout  suitable cylindrical coordinates) converges to the following dynamical system, which is the composition of the well-known point vortex system with a drift term along the symmetry axis,
\begin{equation}
\label{ode-intro}
\dot \zeta^i = -\frac{1}{2\pi} \sum_{j\ne i} a_j \frac{(\zeta^i-\zeta^j)^\perp}{|\zeta^i-\zeta^j|^2} + \frac{a_i}{4\pi\alpha} \begin{pmatrix} 1 \\ 0 \end{pmatrix}, \quad i=1,\ldots,N,
\end{equation}
where $\zeta^i= (\zeta^i_1, \zeta^i_2)\in \bb R^2$  ($(v_1,v_2)^\perp = (v_2,-v_1)$) and the real quantity $a_i$ is related to the vorticity mass of the $i$-th ring. This dynamical system accounts for an old observation regarding the so-called \textit{leapfrogging} phenomenon, which goes back to the work of Helmholtz \cite{H,H1}, who describes such configuration, in the case of two rings solely, with the following words \cite[p. 510]{H1}:

\textit{We can now see generally how two ring-formed vortex-filaments having the same axis would mutually affect each other, since each, in addition to its proper motion, has that of its elements of fluid as produced by the other. If they have the same direction of rotation, they travel in the same direction; the foremost widens and travels more slowly, the pursuer shrinks and travels faster till finally, if their velocities are not too different, it overtakes the first and penetrates it. Then the same game goes on in the opposite order, so that the rings pass through each other alternately.}

Indeed, as discussed in Section \ref{sec:7}, Eq.~\eqref{ode-intro} admits solutions such that the relative position $\zeta^1-\zeta^2$ performs a periodic motion, which corresponds to the leapfrogging motion of the rings.

Even if this phenomenon has been known since Helmholtz, addressed in many papers, such as \cite{aiki, bori, buffoni, da, dyson, dyson1, hicks, klein, lamb}, and studied also from a numerical point of view \cite{chen, lim, naka, riley}, its mathematical justification, as a rigorous derivation from Euler equation, has received only recently a positive answer in \cite{DDMW}, in which it is constructed a special solution exhibiting this feature (see also \cite{JS, js2021} in the context of the Gross-Pitaevskii equation).

The dynamics of several coaxial vortex rings at distance of order $|\log\eps|$ from the symmetry axis appears to represent a critical regime, in which the two terms on the right-hand side of Eq.~\eqref{ode-intro} are of the same order (the first one is the interaction with the other vortex rings, the second one is the self-induced field). The motion exhibits features different from the cases in which the distance is of order $|\log\eps|^k$, $0\le k<1$, or the distance is of order $|\log\eps|^k$, $k>1$ (or larger). Note that when $0\le k<1$, in order to have the self-induced field not diverging, the vorticity mass of each ring has to be chosen of order $|\log\eps|^{2k-1}$. In this case, for $k=0$, the self-induced field acting on each ring is dominant with respect to its interaction with the others, and the rings perform rectilinear motions with constant speed, see \cite{BCM, BuM2}, while for $0<k<1$ the dynamics has not been studied explicitly but we believe that the behavior is analogous to the case $k=0$. When the distance is of order $|\log\eps|^k$, $k>1$ (or larger), the interaction of each vortex ring with the others is dominant with respect to the self-induced field and the motion is described by the point vortex system \cite{CS, CavMarJMP,Mar99} (explicitly studied for $k>2$ in \cite{CavMarJMP}, while for $1<k\le2$ we believe to get the same behavior). We remark that for $k>1$ the vorticity mass of each ring has to be chosen of order $|\log\eps|^k$ to have a not trivial behavior.

In the present paper we analyze the critical regime and prove that the aforementioned conjecture of \cite{MN} holds true. More precisely, we show that in the limit $\eps\to 0$ (when the vorticity becomes very large) and for quite general initial data the motion of the rings is governed by Eq.~\eqref{ode-intro}, at least for short but positive times. This is indeed the main difference with respect to the result obtained in \cite{DDMW}, since while we study the Cauchy problem, with arbitrary initial data (we require that the initial vorticity is bounded and supported on separated rings), in \cite{DDMW} a special solution is constructed. Moreover in \cite{DDMW} the authors adopt a different scaling with respect to the present one, that is  a distance $O(|\log\eps|^{-\frac12})$ between the rings and $O(1)$ from the symmetry axis  (they need to scale also the time).  The two scalings are not in contradiction, since they both give rise to almost the same limiting dynamical system, where the competitor terms are of the same order (the interaction between the rings and the drift term deriving from the axial symmetry).  Our scaling is preferable for our techniques, since it allows us to control the interaction between the rings. Lastly, the methods used in  \cite{DDMW} are very different from ours, in \cite{DDMW} the authors use a  \textit{gluing} scheme for PDE, while our approach is based on an accurate use of conserved quantities. Our proof indeed relies on some new ideas, merged with methods of previous papers \cite{MN, BCM00, BuM2}, which allow to overcome delicate technical points that seemed hard to be solved (since the problem was stated, in the present framework, in \cite{MN}). Going into specifics, we detail below the main points.

\noindent
($i$) The fact that the energy of each vortex ring almost conserves its initial value, at the leading term, allows to state that the vorticity mass of each vortex ring is concentrated inside a torus (whose cross section has a diameter vanishing with $\eps$) during the time evolution. This result could at first sight seem not obvious, considering that a plain estimate of the time derivative of the energy of each vortex ring implies \textit{a priori} a variation of the same order of the initial energy. This is the content of Section \ref{sec:3}.

\noindent
($ii$) The iterative method, used to prove that each vortex ring has compact support at positive times (which is an essential tool to control the interaction among the vortex rings), requires to be splitted into two separated procedures, due to the fact that the a priori estimate of a fundamental quantity, the moment of inertia, is not good enough to make work the iterative method in its standard form (as used, for example, in \cite{BuM2}). This is done in Section \ref{sec:4}, while the subsequent support property is proved in Section \ref{sec:5}.

Once concentration and localization properties of the vortex rings are guaranteed, the proof of the main theorem on the convergence to the system Eq.~\eqref{ode-intro} can be easily concluded. This is the content of Section \ref{sec:6}.

The last section of the paper, Section \ref{sec:7}, concerns the leapfrogging phenomenon, treated in the special case of two rings discussed by Helmholtz. The convergence result, as stated in general and applied in this context for an appropriate choice of the initial data, guarantees at most one overtaking between the rings within the time interval of convergence. This is not completely satisfactory since, in accordance with experimental and numerical observations, several overtakings can take place before the rings dissolve and lose their shape \cite{YM,AN}.

Fortunately, in the special case of two vortex rings with large enough main radii, we can extend the time of convergence in order to cover several crossings between the rings. More precisely, it is possible to repeatedly apply the construction of item (i)-(ii) by suitably increasing the parameter $\alpha$ (i.e., the distance from the symmetry axis), thus reaching any arbitrarily fixed time. This is not really surprising, since as $\alpha$ increases the system approaches (formally) a planar fluid,  Eq.~\eqref{ode-intro} gets closer to the standard point vortex model, and, in the planar case, convergence to the point vortex model occurs globally in time (even up to times diverging with $\eps$, see \cite{BuM1}). 

\section{Notation and main result}
\label{sec:2}

The Euler equations governing the time evolution in three dimension of an incompressible inviscid fluid of unitary density with velocity $\bs u = \bs u(\bs\xi,t)$ decaying at infinity take the form,
\begin{gather}
\label{vorteq}
\partial_t \bs\omega + (\bs u\cdot \nabla) \bs\omega  = (\bs \omega\cdot \nabla) \bs u\,,  
\\ \label{u-vort} 
\bs u(\bs\xi,t) = - \frac{1}{4\pi} \int_{\bb R^3}\! \rmd \bs\eta \, \frac{(\bs\xi-\bs\eta) \wedge \bs\omega(\bs\eta,t)}{|\bs\xi-\bs\eta|^3} \,,
\end{gather}
where $\bs\omega = \bs \omega(\bs\xi,t) = \nabla \wedge \bs u(\bs\xi,t)$ is the vorticity, $\bs\xi = (\xi_1,\xi_2,\xi_3)$ denotes a point in $\bb R^3$, and $t\in \bb R_+$ is the time. Note that Eq.~\eqref{u-vort} clearly implies the incompressibility condition $\nabla\cdot \bs u=0$.

Our analysis is restricted to the special class of axisymmetric (without swirl) solutions to Eqs.~\eqref{vorteq}, \eqref{u-vort}. We recall that a vector field $\bs F$ is called axisymmetric without swirl if, denoting by the $(z,r,\theta)$ the cylindrical coordinates in a suitable frame, the cylindrical components $(F_z, F_r, F_\theta)$ of $\bs F$ are such that $F_\theta=0$ and both $F_z$ and $F_r$ are independent of $\theta$. 

The axisymmetry is preserved by the time evolution. Furthermore, when restricted to axisymmetric velocity fields $\bs u(\bs\xi,t) = (u_z(z,r,t), u_r(z,r,t), 0)$, the vorticity is
\begin{equation}
\label{omega}
\bs\omega = (0,0,\omega_\theta) = (0,0,\partial_z u_r - \partial_r u_z)
\end{equation}
and, denoting henceforth $\omega_\theta$ by $\Omega$, Eq.~\eqref{vorteq} reduces to
\begin{equation}
\label{omeq}
\partial_t \Omega + (u_z\partial_z + u_r\partial_r) \Omega - \frac{u_r\Omega}r = 0 \,,
\end{equation}
Finally, from Eq.~\eqref{u-vort}, $u_z = u_z(z,r,t)$ and $u_r=u_r(z,r,t)$ are given by
\begin{align}
\label{uz}
u_z & = - \frac{1}{2\pi} \int\! \rmd z' \!\int_0^\infty\! r' \rmd r' \! \int_0^\pi\!\rmd \theta \, \frac{\Omega(z',r',t) (r\cos\theta - r')}{[(z-z')^2 + (r-r')^2 + 2rr'(1-\cos\theta)]^{3/2}} \,,
\\ \label{ur}
u_r & = \frac{1}{2\pi} \int\! \rmd z' \!\int_0^\infty\! r' \rmd r' \! \int_0^\pi\!\rmd \theta \, \frac{\Omega(z',r',t) (z - z')\cos\theta}{[(z-z')^2 + (r-r')^2 + 2rr'(1-\cos\theta)]^{3/2}} \,.
\end{align}
Otherwise stated, the axisymmetric solutions to the Euler equations are given by the solutions to Eqs.\ \eqref{omeq}, \eqref{uz}, and \eqref{ur}. We also notice that the incompressibility condition reduces to
\begin{equation}
\label{divu0}
\partial_z(ru_z) + \partial_r(ru_r) = 0\,.
\end{equation}

Since we are interested also to non-smooth initial data, we shall consider weak formulations of the equations of motion. To this end, we notice that Eq.~\eqref{omeq} expresses that the quantity $\Omega/r$ is conserved along the flow generated by the velocity field, i.e., 
\begin{equation}
\label{cons-omr}
\frac{\Omega(z(t),r(t),t)}{r(t)} = \frac{\Omega(z(0),r(0),0)}{r(0)} \,,
\end{equation}
with $(z(t),r(t))$ solution to
\begin{equation}
\label{eqchar}
\dot z(t) = u_z(z(t),r(t),t) \,, \qquad \dot r(t) = u_r(z(t),r(t),t) \,.
\end{equation}
Eqs.~\eqref{uz}, \eqref{ur}, \eqref{cons-omr}, and \eqref{eqchar} can be assumed as a weak formulation of the Euler equations in the framework of axisymmetric solutions. An equivalent weak formulation is still obtained from Eq.~\eqref{omeq} by a formal integration by parts,
\begin{equation}
\label{weqO}
\frac{\rmd}{\rmd t} \Omega_t[f] = \Omega_t[u_z\partial_z f + u_r\partial_r f + \partial_t f] \,,
\end{equation}
where $f = f(z,r,t)$ is any bounded smooth test function and 
\[
\Omega_t[f] := \int\! \rmd z \!\int_0^\infty\! \rmd r \, \Omega(z,r,t) f(z,r,t) \,.
\]

The existence of a global solution both for the Euler and Navier-Stokes equations has been established many years ago \cite{Lad,UY}, see also \cite{Fe-Sv,Gal13,Gal12} for more recent results. Global in time existence and uniqueness of a weak solution to the related Cauchy problem holds when the initial vorticity is a bounded function with compact support contained in the open half-plane $\Pi:=\lbrace(z,r):r>0\rbrace$, see, e.g.,  \cite[Page 91]{MaP94} or \cite[Appendix]{CS}. In particular, the support of the vorticity remains in the open half-plane $\Pi$ at any time. 

We choose initial data representing a system of $N$ concentrated vortex rings, each one with cross-section of radius not larger than $\eps$ and main radius (i.e., distance from the symmetry axis) of order $|\log\eps|$, where $\eps\in (0,1)$ is a small parameter. More precisely, denoting by $\Sigma(\zeta|\rho)$ the disk of center $\zeta$ and radius $\rho$, we fix $\alpha>0$, $N$ distinct points $\zeta^i \in \bb R^2$, $i=1,\ldots,N$, and $\eps_0$ small enough to have
\begin{gather*}
\overline{\Sigma((0,r_\eps) + \zeta^i|\eps)} \subset\Pi\quad \forall\, i  \quad  \forall\, \eps\in (0,\eps_0)\,, \\ \Sigma((0,r_\eps)  + \zeta^i|\eps)\cap \Sigma((0,r_\eps)  + \zeta^j|\eps)=\emptyset\quad \forall\, i\ne j  \quad  \forall\, \eps\in (0,\eps_0)\,,
\end{gather*}
where
\begin{equation}
\label{reps}
r_\eps = \alpha|\log\eps|\,.
\end{equation}
We then choose
\begin{equation}
\label{inO}
\Omega_\eps(z,r,0) = \sum_i \Omega_{i,\eps}(z,r,0) \quad \forall\,\eps\in (0,\eps_0)\,,
\end{equation}
where each $\Omega_{i,\eps}(z,r,0)$ is a non-negative or non-positive function such that
\[
\supp\, \Omega_{i,\eps}(\cdot,0) \subset \Sigma((r_\eps,0) + \zeta^i|\eps)\quad \forall\,\eps\in (0,\eps_0)\,.
\]

We also assume that there are $N$ real parameters $a_1,\ldots, a_N$, called the \textit{vortex intensities}, such that
\[
\int\!\rmd z \!\int_0^\infty\!\rmd r\, \Omega_{i,\eps}(z,r,0) = a_i \quad \forall\, \eps\in (0,\eps_0)\,,
\]
which means that the vorticity mass of each ring is proportional to its mean radius, i.e,  order $r_\eps$. Finally, to avoid too large vorticity concentrations, our last assumption is the existence of a constant $M>0$ such that 
\[
|\Omega_{i,\eps}(z,r,0)| \le \frac{M}{\eps^2} \quad \forall\, \eps\in (0,\eps_0)\,.
\]

In view of Eq.~\eqref{cons-omr}, the decomposition Eq.~\eqref{inO} extends to positive time setting
\[
\Omega_\eps(z,r,t) = \sum_i \Omega_{i,\eps}(z,r,t) \quad \forall\,\eps\in (0,\eps_0)\,,
\]
with $\Omega_{i,\eps}(x,t)$ the time evolution of the $i$-th vortex ring,
\[
\Omega_{i,\eps}(z(t),r(t),t) := \frac{r(t)}{r(0)} \Omega_{i,\eps}(z(0),r(0),0)\,.
\]

Since the parameter $\eps$ will eventually go to zero, it is convenient to introduce the new variables $x = (x_1,x_2)$ defined by
\[
z = x_1\,, \quad r = r_\eps + x_2\,.
\]
It is also useful to extend the vorticity expressed in these new variables to a function on the whole plane. More precisely, we define 
\begin{equation}
\label{Oo}
\omega_\eps(x,t) = \begin{cases} \Omega_\eps(x_1,r_\eps+x_2,t) & \text{if } x_2 > -r_\eps\,, \\ 0 &\text{if } x_2 \le -r_\eps\,, \end{cases}
\end{equation}
and the same position defines $\omega_{i,\eps}(x,t)$ provided $\Omega_\eps$ is replaced by $\Omega_{i,\eps}$ in the right-hand side. In particular, with a slight abuse of notation, we shall write 
\begin{gather*}
\int\! \rmd z \!\int_0^\infty\! \rmd r \, \Omega_\eps(z,r,t) G(z,r) = \int\!\rmd x\, \omega_\eps(x,t) G(x_1,r_\eps + x_2)\,, \\ 
\int\! \rmd z \!\int_0^\infty\! \rmd r \, \Omega_{i,\eps}(z,r,t) G(z,r) = \int\!\rmd x\, \omega_{i,\eps}(x,t) G(x_1,r_\eps + x_2)\,,
\end{gather*}
despite a function $x\mapsto G(x_1,r_\eps+x_2)$ is defined only if $x_2> - r_\eps$.

In this way, the equations of motion Eqs.~\eqref{uz}, \eqref{ur}, \eqref{cons-omr}, and \eqref{eqchar} take the following form,
\begin{equation}
\label{u=}
u(x,t) = \int\!\rmd y\, H(x,y)\, \omega_\eps(y,t)\,,
\end{equation}
\begin{equation}
\label{cons-omr_n}
\omega_\eps(x(t),t) = \frac{r_\eps + x_2(t)}{r_\eps + x_2(0)} \omega_\eps(x(0),0) \,, 
\end{equation}
\begin{equation}
\label{eqchar_n}
\dot x(t) = u(x(t),t) \,,
\end{equation}
where $u(x,t) = (u_1(x,t), u_2(x,t))$ and the kernel $H(x,y) = (H_1(x,y),H_2(x,y))$ is given by
\begin{align}
\label{H1}
H_1(x,y) & = \frac{1}{2\pi} \int_0^\pi\!\rmd \theta \, \frac{(r_\eps + y_2)(r_\eps + y_2 - (r_\eps + x_2)\cos\theta)}{\big[|x-y|^2 + 2(r_\eps + x_2) (r_\eps + y_2) (1-\cos\theta)\big]^{3/2}} \,,
\\ \label{H2}
H_2(x,y) & = \frac{1}{2\pi} \int_0^\pi\!\rmd \theta \, \frac{(r_\eps +y_2) (x_1-y_1) \cos\theta}{\big[|x-y|^2 + 2(r_\eps + x_2)(r_\eps + y_2)(1-\cos\theta)\big]^{3/2}} \,.
\end{align}
(we omit the explicit dependence of $u$ and $H$ on $\eps$). Moreover, the initial data Eq.~\eqref{inO} now reads,
\begin{equation}
\label{in}
\omega_\eps(x,0) = \sum_i  \omega_{i,\eps}(x,0) \quad \forall\,\eps\in (0,\eps_0)\,,
\end{equation}
with $\omega_{i,\eps}(x(0),0)$ satisfying
\begin{gather}
\label{initial}
\Lambda_{i,\eps}(0) := \supp\, \omega_{i,\eps}(\cdot,0) \subset \Sigma(\zeta^i|\eps) \quad \forall\,\eps\in (0,\eps_0)\,, 
\\ \label{ai} 
\int\!\rmd x \, \omega_{i,\eps}(x,0) = a_i \quad\forall\, \eps\in (0,\eps_0)\,, 
\\ \label{Mgamma}
|\omega_{i,\eps}(x,0)| \le \frac{M}{\eps^2} \quad\forall\, \eps\in (0,\eps_0)\,.
\end{gather}
Finally,
\begin{equation}
\label{in-t}
\omega_\eps(x,t) = \sum_i \omega_{i,\eps}(x,t) \quad \forall\,\eps\in (0,\eps_0)\,,
\end{equation}
with
\begin{equation}
\label{cons-omr_ni}
\omega_{i,\eps}(x(t),t) =  \frac{r_\eps + x_2(t)}{r_\eps + x_2(0)} \omega_{i,\eps}(x(0),0)\,,
\end{equation}
where $x(t)$ solves Eq.~\eqref{eqchar_n}. It follows that each $\omega_{i,\eps}(x,t)$ remains non-negative or non-positive also for $t>0$. Moreover, the weak formulation Eq.~\eqref{weqO} holds also separately for each $\omega_{i,\eps}(x,t)$, and reads
\begin{equation}
\label{weq}
\frac{\rmd}{\rmd t} \int\!\rmd x\, \omega_{i,\eps}(x,t) f(x,t) = \int\!\rmd x\, \omega_{i,\eps}(x,t) \big[ u \cdot \nabla f + \partial_t f \big](x,t) \,.
\end{equation}
In particular, the vortex intensities are conserved during  the time evolution,
\begin{equation}
\label{ait}
M_0^i(t) := \int\!\rmd x \, \omega_{i,\eps}(x,t) = a_i \quad \forall\, t \ge 0 \quad\forall\, \eps\in (0,\eps_0)\,.
\end{equation}

Sometimes, in the sequel, we will improperly call vorticity mass of a vortex ring its intensity. More generally, the quantity $\int_D\!\rmd x\, \omega_{i,\eps}(x,t)$ will be indicated as the amount of vorticity mass of the $i$-th vortex ring contained in the region $D\subseteq \bb R^2$.

We now denote by $(\zeta^1(t), \ldots,\zeta^N(t))$, $t\in [0,T^*)$, the maximal solution to the Cauchy problem,
\begin{equation}
\label{ode}
\begin{cases} \dot \zeta^i(t) = {\displaystyle  \sum_{j\ne i} a_j K(\zeta^i(t)-\zeta^j(t)) +\frac{a_i}{4\pi\alpha} \begin{pmatrix} 1 \\ 0 \end{pmatrix}} \\   \zeta^i(0) = \zeta^i \end{cases} \quad  \forall\, i=1, \ldots,N\,,
\end{equation}
with $\{\zeta^i\}_{i=1}^N$ as in Eq.~\eqref{initial} and
\begin{equation}
\label{Kern}
K(x) := - \frac{1}{2\pi} \nabla^\perp \log|x|
\end{equation}
(here, if $v = (v_1,v_2)$ then $v^\perp = (v_2,-v_1)$).

We can now state the main result of the paper.

\begin{theorem}
\label{thm:1}
Assume the initial condition $\omega_\eps(x,0)$ verifies Eqs.~\eqref{in}, \eqref{initial}, \eqref{ai}, and \eqref{Mgamma}. Then, for any fixed (independent of $\eps$) $\varrho>0$  such that the closed disks $\overline{\Sigma(\zeta^i|2\varrho)}$ are mutually disjointed, there exists $T_\varrho \in (0,T^*)$ such that for any $\eps$ small enough and $t\in [0,T_\varrho]$ the following holds true.

\begin{itemize}
\item[(1)] $\Lambda_{i,\eps}(t) := \supp\, \omega_{i,\eps}(\cdot,t) \subseteq \Sigma(\zeta^i(t)|\varrho)$ and the disks $\Sigma(\zeta^i(t)|2\varrho)$ are mutually disjointed.
\item[(2)] There exist $(\zeta^{1,\eps}(t), \ldots,\zeta^{N,\eps}(t))$ and $\varrho_\eps>0$ such that
\[
\lim_{\eps\to 0} \int_{\Sigma(\zeta^{i,\eps}(t)|\varrho_\eps)}\!\rmd x\, \omega_{i,\eps}(x,t) = a_i\quad \forall\, i=1,\ldots,N,
\]
with $\displaystyle \lim_{\eps\to 0} \varrho_\eps = 0$, and
\[
\lim_{\eps\to 0} \zeta^{i,\eps}(t) = \zeta^i(t) \quad  \forall\, i=1,\ldots,N.
\]
\end{itemize}
\end{theorem}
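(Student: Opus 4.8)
The plan is to track, for each ring, the \emph{barycenter}
\[
B_i(t) = \frac{1}{a_i}\int\!\rmd x\, x\,\omega_{i,\eps}(x,t),
\]
and to show it obeys the system \eqref{ode} up to vanishing errors, while proving simultaneously that the vorticity stays sharply concentrated and localized near $B_i(t)$. Applying the weak formulation \eqref{weq} with $f(x)=x$ gives $\dot B_i(t) = \frac{1}{a_i}\int\!\rmd x\, u(x,t)\,\omega_{i,\eps}(x,t)$, and I would split $u$ through \eqref{u=} into the self-induced field of ring $i$ and the fields generated by the rings $j\ne i$. The first input is an asymptotic analysis of the kernel \eqref{H1}--\eqref{H2} in the regime $r_\eps\to\infty$: for $x,y$ at mutual distance of order one the dominant contribution to the $\theta$-integral localizes at $\theta=0$, and after the rescaling $\theta=s/r_\eps$ one finds $H(x,y)\to K(x-y)$, so the mutual interaction reduces to the planar point-vortex kernel \eqref{Kern}. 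For the self-interaction one instead recovers the classical thin-ring self-propulsion: with core size $\eps$, radius $r_\eps=\alpha|\log\eps|$ and circulation $a_i$, the logarithmic factor $\log(r_\eps/\eps)\sim|\log\eps|$ compensates the prefactor $1/r_\eps$ and produces precisely the axial drift $\frac{a_i}{4\pi\alpha}$ in the $x_1$-direction.

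Granting enough concentration, these two computations show $B_i(t)$ satisfies \eqref{ode} up to an $o(1)$ error, so that $B_i(t)\to\zeta^i(t)$ by a Gronwall comparison with the ODE. The argument is therefore conditional on two concentration statements to be established in parallel on a suitable time interval. First, that the vorticity of each ring stays confined to a region of diameter $\varrho_\eps\to0$; I would obtain this from the near-conservation of the renormalized self-energy
\[
E_i(t) = -\frac{1}{4\pi}\int\!\rmd x\int\!\rmd y\,\log|x-y|\,\omega_{i,\eps}(x,t)\,\omega_{i,\eps}(y,t),
\]
the leading planar part of the true kinetic energy, since for a fixed-sign, fixed-mass vorticity a loss of concentration lowers this functional, so controlling its variation prevents the support from spreading. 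Second, that the entire support remains inside $\Sigma(\zeta^i(t)|\varrho)$; for this I would control the moment of inertia $I_i(t)=\int\!\rmd x\,|x-B_i(t)|^2\omega_{i,\eps}(x,t)$ and then bootstrap on the mass $\mu_i(R,t)=\int_{|x-B_i(t)|>R}\rmd x\,\omega_{i,\eps}(x,t)$ lying outside disks of decreasing radius, showing that the velocity available to transport mass outward is too small for it to escape the disk of radius $\varrho$.

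I would close the whole argument with a continuation (stopping-time) scheme: let $T$ be the first time at which either the disks $\Sigma(\zeta^i(t)|2\varrho)$ cease to be disjoint or a support estimate is saturated. On $[0,T]$ the rings are separated by order one, so the fields generated by the rings $j\ne i$ are smooth and bounded on the support of ring $i$ (the singularity of $K$ being harmless there) and the kernel asymptotics apply uniformly; this yields the $o(1)$ closeness of $B_i$ to $\zeta^i$ and, together with the localization estimate, shows the support stays \emph{strictly} inside $\Sigma(\zeta^i(t)|\varrho)$, so that $T$ may be taken equal to any fixed $T_\varrho<T^*$ for $\eps$ small. Conclusion (1) is then the localization estimate at time $t$, while conclusion (2) follows by choosing $\zeta^{i,\eps}(t)=B_i(t)$ and invoking the concentration of mass into $\Sigma(B_i(t)|\varrho_\eps)$.

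I expect the main obstacle to be the energy step. As the authors remark, a direct bound on $\dot E_i$ only shows a variation \emph{of the same order} as $E_i$ itself, which is useless; the key is to exploit a cancellation in the time derivative so that the genuine variation is of strictly lower order. A second, subtler difficulty is that the a priori bound on $I_i$ available from these estimates is not sharp enough to feed the standard single-pass iteration, so the localization must be split into two stages—a coarse confinement followed by a refinement—which is the delicate point in controlling the support of interacting rings in this critical regime.
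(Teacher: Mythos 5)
Your proposal follows essentially the same route as the paper: the stopping-time scheme, the energy-based concentration estimates, the moment-of-inertia bound feeding a two-stage iteration on the exterior mass, the radial-velocity estimate confining the supports, and the Gronwall comparison of the barycenters with Eq.~\eqref{ode} (with the drift $a_i/(4\pi\alpha)$ extracted from the logarithmic part of the kernel) all match Sections \ref{sec:3}--\ref{sec:6}, and you correctly anticipate the two delicate points the authors themselves single out. The one place your mechanism differs slightly is the energy step: the paper never differentiates the self-energies nor invokes a cancellation in $\dot E_i$, but instead uses exact conservation of the \emph{total} kinetic energy together with the static bound $|E_{i,j}(t)| = O\big((\log|\log\eps|)\,|\log\eps|\big)$ on the interaction energies for $t \le T_\eps$, which is of lower order than the $|\log\eps|^2$ self-energies.
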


The time interval of convergence can be enlarged in the case of two vortex rings with initial data such that the relative position $\zeta^1-\zeta^2$ performs a periodic motion (with respect to the evolution Eq.~\eqref{ode-intro} with $N=2$) and $\alpha$ is chosen sufficiently large. For brevity in the exposition, we do not detail the result here and address the reader to Section \ref{sec:7}.

\section{Concentration estimates}
\label{sec:3}

Given $\varrho$ as in the statement of Theorem \ref{thm:1}, since $|\zeta^i-\zeta^j| > 4\varrho$ for any $i\ne j$, we can find $T\in (0,T^*)$ such that
\begin{equation}
\label{T}
\min_{i\ne j} \min_{t\in [0,T]} |\zeta^i(t)-\zeta^j(t)| \ge 4\varrho\,,
\end{equation}
and let also
\begin{equation}
\label{dbarra}
\bar d:= \max_i\max_{t\in [0,T]} |\zeta^i(t)|\,.
\end{equation}
We then define
\begin{equation}
\label{Teps}
T_\eps := \max\left\{t\in [0,T] \colon \Lambda_{i,\eps}(s) \subset \Sigma(\zeta^i(s)|\varrho) \; \forall\, s\in [0,t]\; \forall\, i \right\}.
\end{equation}
Without loss of generality, hereafter we assume $\eps_0 < \varrho$ so that $T_\eps>0$ for any $\eps\in (0,\eps_0)$ by continuity. Clearly,
\begin{equation}
\label{sep-disks}
\begin{split}
& |x| \le \bar d + \varrho \quad  \forall\, x\in \Sigma(\zeta^i(t)|\varrho)  \quad  \forall\, t \in [0,T] \quad \forall\, i \,, \\ & |x-y| \ge 2\varrho \quad \forall\, x\in\Sigma(\zeta^i(t)|\varrho) \quad\forall\, y \in\Sigma(\zeta^j(t)|\varrho) \quad \forall\, t \in [0,T] \quad \forall\, i\ne j\,,
\end{split}
\end{equation}
and therefore, up to time $T_\eps$, also the supports of the vortex rings are uniformly bounded and separated,
\begin{equation}
\label{sep-supp}
\begin{split}
& |x| \le \bar d + \varrho \quad  \forall\, x\in \Lambda_{i,\eps}(t) \quad  \forall\, t \in [0,T_\eps] \quad \forall\, i \,, \\ & |x-y| \ge 2\varrho \quad \forall\, x\in \Lambda_{i,\eps}(t)\quad\forall\, y \in \Lambda_{j,\eps}(t) \quad \forall\, t \in [0,T_\eps] \quad \forall\, i\ne j\,.
\end{split}
\end{equation}
Clearly, $T_\eps$ could vanish as $\eps\to 0$, the key point in proving Theorem \ref{thm:1} will be a bootstrap argument based on the analysis of the motion in the time interval $[0,T_\eps]$ which shows that in fact this is not the case. The first ingredient for such analysis are suitable concentration inequalities on the vorticities, which are the content of the present section.

In \cite{MN} and previous work \cite{BCM00}, concentration estimates on the vorticity mass in the case of a single vortex ring are deduced by using the conservation laws of kinetic energy, axial moment of inertia, and vortex intensity. Here, we need similar estimates for the vorticity of each vortex ring. The corresponding kinetic energies and axial moments of inertia are not conserved, but as long as the interaction among the rings is not too large, i.e., up to time $T_\eps$, it is still possible to obtain such inequalities.

\subsection{Energy estimates}
\label{sec:3.1}

The kinetic energy $E(t) =  \frac 12 \int\!\rmd\bs\xi\, |\bs u (\bs\xi,t)|^2$ associated to axisymmetric solutions described via Eqs.~\eqref{u=}, \eqref{cons-omr_n}, and \eqref{eqchar_n} takes the form
\[
E(t) = \frac 12 \int\!\rmd z \!\int_0^\infty\!\rmd r\, 2\pi r\, (u_z^2 + u_r^2)(z,r,t) =  \frac 12 \int\! \rmd x \,  2\pi (r_\eps + x_2) |u(x,t)|^2\,.
\]
It is convenient to express $E(t)$ as a quadratic form of the vorticity $\omega_\eps(x,t)$. To this end, we introduce the stream function
\[
\Psi(x,t) = \int\!\rmd y\, S(x,y)\, \omega_\eps(y,t)\,, 
\]
where the Green kernel $S(x,y)$ reads
\[
S(x,y) := \frac{(r_\eps + x_2) (r_\eps + y_2)}{2\pi} \int_0^\pi\!\rmd\theta\, \frac{\cos\theta}{\sqrt{|x-y|^2 + 2(r_\eps + x_2) (r_\eps + y_2)(1-\cos\theta)}}\,,
\]
so that $u(x,t) = (r_\eps + x_2)^{-1} \nabla^\perp\Psi(x,t)$ and the energy takes the form (see, e.g., \cite{BCM00,F})
\[
E(t) = \pi \int\! \rmd x\, \Psi(x,t) \, \omega_\eps(x,t) = \pi \int\!\rmd x \int\!\rmd y\, S(x,y)\, \omega_\eps(x,t) \, \omega_\eps(y,t)\,.
\]
In view of Eq.~\eqref{in-t}, the energy can be decomposed as the sum of the energies due to the self-interaction of each vortex ring plus those due to the interaction among the rings,
\begin{gather}
\label{E=sum Ei}
E(t) = \sum_i E_i(t) + 2 \sum_{i > j} E_{i,j}(t)\,, \\ \label{Ei=} 
E_i(t) = \pi \int\!\rmd x \int\!\rmd y\, S(x,y)\, \omega_{i,\eps}(x,t) \, \omega_{i,\eps}(y,t)\,,
\\ \label{Eij=}
E_{i,j}(t) = \pi \int\!\rmd x \int\!\rmd y\, S(x,y)\, \omega_{i,\eps}(x,t) \, \omega_{j,\eps}(y,t)\,.
\end{gather}
Hereafter, we let
\begin{equation}
\label{|a|}
|a| = \sum_i |a_i|\,.
\end{equation}

\begin{lemma}
\label{lem:E>}
There exists $C_1 = C_1(\alpha, |a|, \bar d, \varrho)>0$ such that, for any $\eps \in (0,\eps_0)$,
\begin{equation}
\label{E>}
\sum_i E_i(t) \ge \frac\alpha2 \sum_i a_i^2 |\log\eps|^2 - C_1 (\log|\log\eps|)|\log\eps| \quad \forall\, t\in [0,T_\eps]\,.
\end{equation}
\end{lemma}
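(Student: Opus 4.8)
The plan is to exploit the conservation of the total kinetic energy, $E(t)=E(0)$, together with the decomposition \eqref{E=sum Ei}, thereby transferring the strong concentration of the initial datum (on the scale $\eps$) into a lower bound valid at any later time $t\le T_\eps$. Indeed, writing
\[
\sum_i E_i(t) = E(t) - 2\sum_{i>j}E_{i,j}(t) = E(0) - 2\sum_{i>j}E_{i,j}(t) = \sum_i E_i(0) + 2\sum_{i>j}\big[E_{i,j}(0)-E_{i,j}(t)\big],
\]
the statement reduces to two facts: (a) a lower bound $\sum_i E_i(0)\ge \frac\alpha2\sum_i a_i^2|\log\eps|^2 - C(\log|\log\eps|)|\log\eps|$; and (b) the estimate $|E_{i,j}(s)|\le C(\log|\log\eps|)|\log\eps|$ for $i\ne j$ and $s\in\{0,t\}$. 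Both rest on sharp estimates for the Green kernel $S(x,y)$ in the regime $r_\eps=\alpha|\log\eps|\to\infty$.

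\emph{Kernel estimates.} Setting $b=2(r_\eps+x_2)(r_\eps+y_2)$ and $d=|x-y|$, so that $S(x,y)=\frac{(r_\eps+x_2)(r_\eps+y_2)}{2\pi}\,\mathcal I$ with $\mathcal I=\int_0^\pi \rmd\theta\,\cos\theta\,[d^2+b(1-\cos\theta)]^{-1/2}$, I would first split $\cos\theta=1-(1-\cos\theta)$ to write $\mathcal I=\mathcal I_1-\mathcal I_2$. The substitution $1-\cos\theta=2\sin^2(\theta/2)$ turns $\mathcal I_1$ into a complete–elliptic–integral expression whose leading behaviour for $d\ll\sqrt b$ is $\mathcal I_1=\frac{1}{\sqrt{(r_\eps+x_2)(r_\eps+y_2)}}\big(\log\frac1d+O(\log r_\eps)\big)$, while $\mathcal I_2=O(r_\eps^{-1})$ is a bounded remainder. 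Since $x_2,y_2$ are bounded on the supports by \eqref{sep-supp} (whence $\sqrt{(r_\eps+x_2)(r_\eps+y_2)}=r_\eps(1+O(r_\eps^{-1}))$), this yields the two inequalities I need: the \emph{sharp} lower bound for nearby points,
\[
S(x,y)\ \ge\ \frac{r_\eps}{2\pi}\log\frac1{|x-y|}\ -\ C\,r_\eps\log r_\eps,
\]
with coefficient exactly $\frac{r_\eps}{2\pi}$, and for separated points $|x-y|\ge 2\varrho$ the upper bound $|S(x,y)|\le C\,r_\eps\log r_\eps$.

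\emph{Assembly.} For (a), each $\omega_{i,\eps}(\cdot,0)$ is sign–definite with total mass $a_i$ and is supported in $\Sigma(\zeta^i|\eps)$, so $\omega_{i,\eps}(x,0)\,\omega_{i,\eps}(y,0)\ge0$ and $|x-y|\le2\eps$ there; inserting the lower kernel bound and using $\log\frac1{|x-y|}\ge|\log\eps|-\log2$ gives
\[
E_i(0)=\pi\!\int\!\!\int S(x,y)\,\omega_{i,\eps}(x,0)\,\omega_{i,\eps}(y,0)\,\rmd x\,\rmd y \ \ge\ \frac{\alpha}{2}\,a_i^2\,|\log\eps|^2 - C\,(\log|\log\eps|)\,|\log\eps|,
\]
where the error comes from $r_\eps\log r_\eps=\alpha|\log\eps|\big(\log\alpha+\log|\log\eps|\big)$. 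For (b), at $s=0$ and at $s=t\le T_\eps$ the supports $\Lambda_{i,\eps}(s),\Lambda_{j,\eps}(s)$ are separated by at least $2\varrho$ by \eqref{sep-supp}, so the upper kernel bound together with $\int|\omega_{i,\eps}|=|a_i|$ gives $|E_{i,j}(s)|\le \pi|a_i||a_j|\,C\,r_\eps\log r_\eps\le C(\log|\log\eps|)|\log\eps|$. Substituting (a) and (b) into the displayed identity yields the claim, with $C_1$ depending only on $\alpha,|a|,\bar d,\varrho$.

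\emph{Main obstacle.} The whole argument hinges on the kernel analysis; everything else is bookkeeping. The delicate point is to show that the coefficient of $\log\frac1{|x-y|}$ in $S$ is $\frac{r_\eps}{2\pi}$ up to a \emph{relative} error $O(r_\eps^{-1})$ — this is precisely what pins down the sharp constant $\frac\alpha2$ in the statement, since a relative error $O(r_\eps^{-1})$ contributes only $O(|\log\eps|)$ to $E_i(0)$ — and to make these estimates uniform in $x,y$ as $r_\eps\to\infty$. I would also verify that invoking $E(t)=E(0)$ is legitimate for the weak solutions considered here; should one wish to avoid it, the same bound can be obtained by estimating $\frac{\rmd}{\rmd t}E_{i,j}(t)$ directly and showing the self–energies stay within $O((\log|\log\eps|)|\log\eps|)$ of their initial values on $[0,T_\eps]$, but the energy–conservation route is cleaner.
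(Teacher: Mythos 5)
Your proposal is correct and takes essentially the same route as the paper: conservation of the total energy $E(t)=E(0)$ combined with the decomposition \eqref{E=sum Ei}, the bound $|E_{i,j}(s)|=O\big((\log|\log\eps|)|\log\eps|\big)$ for the cross terms via the support separation \eqref{sep-supp}, and the lower bound $E_i(0)\ge \frac{\alpha}{2}a_i^2|\log\eps|^2 - O\big((\log|\log\eps|)|\log\eps|\big)$ from the logarithmic asymptotics of the kernel $S$, which the paper imports as the estimate \eqref{I1} on $I_0$ from Appendix A of \cite{MN} and you rederive directly. The only cosmetic point is that your pointwise lower bound on $S$ should carry an extra $-C\log\frac{1}{|x-y|}$ error term (harmless after integration against the sign-definite product $\omega_{i,\eps}(x,0)\,\omega_{i,\eps}(y,0)$, as your own remark on the relative error $O(r_\eps^{-1})$ already indicates).
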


\begin{proof}
We write
\begin{equation}
\label{S=I}
S(x,y) = \frac{\sqrt{(r_\eps + x_2) (r_\eps + y_2)}}{2\pi} I_0\left(\frac{|x-y|}{\sqrt{(r_\eps + x_2) (r_\eps + y_2)}}\right),
\end{equation}
where
\[
I_0(s) := \int_0^\pi\!\rmd\theta\, \frac{\cos\theta}{[s^2 + 2(1-\cos\theta)]^{1/2}}\,, \quad s>0\,,
\]
can be easily evaluated, see, e.g., \cite[Appendix A]{MN}, getting
\begin{equation}
\label{I1}
C_0 := \sup_{s>0} \left|I_0(s) - \log \frac{2+\sqrt{s^2 + 4}}s \right| < \infty\,.
\end{equation}

From Eqs.~\eqref{Eij=}, \eqref{S=I}, \eqref{I1} and recalling Eqs.~\eqref{sep-supp}, \eqref{reps}, and \eqref{ait}, it follows that there is $C_1'=C_1'(\alpha,\bar d,\varrho)>0$ such that
\[
|E_{i,j}(t)| \le  C_1'|a_i|\, |a_j| (\log|\log\eps|)|\log\eps| \quad \forall\, t\in [0,T_\eps] \quad \forall\, \eps\in (0,\eps_0)\quad \forall\, i\ne j\,.
\]
Analogously, in view of Eq.~\eqref{initial}, there is $C_1'' = C_1''(\alpha,\bar d)>0$ such that
\[
E_i(0) \ge a_i^2\left[ \frac{\alpha}2 |\log\eps|^2 - C_1'' (\log|\log\eps|)|\log\eps| \right] \quad \forall\, \eps\in (0,\eps_0) \quad \forall\, i\,.
\]
Therefore, since the total kinetic energy is conserved along the motion, i.e., $E(t) = E(0)$, we conclude that
\begin{align*}
\sum_i E_i(t) & = \sum_i E_i(0) + 2 \sum_{i>j} [E_{i,j}(0) - E_{i,j}(t) ] \\ & \ge \sum_i E_i(0) - 2 \sum_{i>j} (|E_{i,j}(0)| + |E_{i,j}(t)|) \ \quad \forall\, t\in [0,T_\eps] \quad \forall\,\eps\in (0,\eps_0)\,,
\end{align*}
from which Eq.~\eqref{E>} follows with, e.g., $C_1 = 2(C_1'+ C_1'') |a|^2$. 
\end{proof}

Without loss of generality, in what follows we further assume $\eps_0<1/\rme^\rme$, so that $\log|\log\eps| >1$ for any $\eps\in (0,\eps_0)$.

\begin{proposition}
\label{prop:conc1}
There exists $C_2 = C_2(\alpha,|a|,\bar d, \varrho)>0$ such that, for any $\eps \in (0,\eps_0)$,
\begin{align}
\label{conc1}
\mc G_i(t) & := \int\!\rmd x \!\int\! \rmd y\, \omega_{i,\eps}(x,t) \omega_{i,\eps}(y,t) \log\Big(\frac{|x-y|}{\eps}\Big) \id(|x-y| \ge \eps) \nonumber \\ & \qquad \le C_2 \log|\log\eps| \quad \forall\, t\in [0,T_\eps] \quad \forall\, i = 1,\ldots,N \,,
\end{align}
where $\id(\cdot)$ denotes the indicator function of a subset.
\end{proposition}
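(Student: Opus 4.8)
The plan is to derive, for each $i$, a pair of matching upper and lower bounds on the self-energy $E_i(t)$ of Eq.~\eqref{Ei=}, both with leading term $\frac\alpha2 a_i^2|\log\eps|^2$, and to read off the estimate for $\mc G_i(t)$ from the gap between them. First I would insert Eq.~\eqref{S=I} together with the asymptotics Eq.~\eqref{I1} into Eq.~\eqref{Ei=}. Writing $\lambda=\lambda(x,y):=\sqrt{(r_\eps+x_2)(r_\eps+y_2)}$ and observing that on the supports one has $|x-y|\le 2(\bar d+\varrho)$ by Eq.~\eqref{sep-supp} while $\lambda\sim r_\eps=\alpha|\log\eps|\to\infty$, the argument $|x-y|/\lambda$ of $I_0$ is small, so Eq.~\eqref{I1} gives $I_0(|x-y|/\lambda)=-\log|x-y|+\log\lambda+h(x,y)$ with $h$ uniformly bounded on the supports. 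Splitting $-\log|x-y|=|\log\eps|-\log(|x-y|/\eps)$ and recalling that $\omega_{i,\eps}(\cdot,t)$ has constant sign with $\int\!\rmd x\,\omega_{i,\eps}=a_i$ by Eq.~\eqref{ait}, I would then write
\begin{equation*}
E_i(t)=T_1(t)-\frac12\int\!\rmd x\!\int\!\rmd y\,\lambda\,\log\frac{|x-y|}\eps\,\omega_{i,\eps}(x,t)\,\omega_{i,\eps}(y,t),
\end{equation*}
with $T_1(t):=\frac12\int\!\rmd x\!\int\!\rmd y\,\lambda\,[|\log\eps|+\log\lambda+h]\,\omega_{i,\eps}(x,t)\,\omega_{i,\eps}(y,t)$.

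Next, for the upper bound on $E_i(t)$ I would estimate $\lambda\le\lambda_{\max}:=r_\eps+\bar d+\varrho$ and $\log\lambda\le\log|\log\eps|+O(1)$ inside $T_1(t)$, and use $\int\!\rmd x\!\int\!\rmd y\,\omega_{i,\eps}\omega_{i,\eps}=a_i^2$, to get $T_1(t)\le\frac\alpha2 a_i^2|\log\eps|^2+O(|\log\eps|\log|\log\eps|)$; the $\log|\log\eps|$ in the final bound originates precisely here, from $\log\lambda\approx\log(\alpha|\log\eps|)$. For the remaining term I would split the domain into $\{|x-y|\ge\eps\}$ and $\{|x-y|<\eps\}$. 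On the former the integrand $\log(|x-y|/\eps)\,\omega_{i,\eps}\omega_{i,\eps}$ is non-negative, so bounding $\lambda\ge\lambda_{\min}:=r_\eps-(\bar d+\varrho)>0$ produces the term $-\frac{\lambda_{\min}}2\mc G_i(t)$. On the latter I would invoke the pointwise bound $|\omega_{i,\eps}(\cdot,t)|\le 2M\eps^{-2}$, valid on $[0,T_\eps]$ by Eqs.~\eqref{cons-omr_ni} and \eqref{Mgamma} (the density ratio $\frac{r_\eps+x_2(t)}{r_\eps+x_2(0)}$ being bounded), together with $\int\!\rmd x\,|\omega_{i,\eps}|=|a_i|$, to obtain $\int\!\rmd x\!\int\!\rmd y\,\id(|x-y|<\eps)\,|\log(|x-y|/\eps)|\,|\omega_{i,\eps}|\,|\omega_{i,\eps}|\le\pi M|a_i|$, whence this part contributes at most $\frac{\lambda_{\max}}2\pi M|a_i|=O(|\log\eps|)$. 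Collecting terms, $E_i(t)\le\frac\alpha2 a_i^2|\log\eps|^2+C|\log\eps|\log|\log\eps|-\frac{\lambda_{\min}}2\mc G_i(t)$.

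For the lower bound on $E_i(t)$ I would exploit $\mc G_j(t)\ge0$ (its integrand is non-negative): dropping that term in the above yields $E_j(t)\le\frac\alpha2 a_j^2|\log\eps|^2+C|\log\eps|\log|\log\eps|$ for every $j$. Subtracting these individual upper bounds from the lower bound on $\sum_j E_j(t)$ furnished by Lemma~\ref{lem:E>} gives $E_i(t)=\sum_j E_j(t)-\sum_{j\ne i}E_j(t)\ge\frac\alpha2 a_i^2|\log\eps|^2-C'|\log\eps|\log|\log\eps|$. Comparing this with the upper bound of the previous paragraph cancels the common leading term $\frac\alpha2 a_i^2|\log\eps|^2$ and leaves $\frac{\lambda_{\min}}2\mc G_i(t)\le C''|\log\eps|\log|\log\eps|$; dividing by $\lambda_{\min}\ge\frac\alpha2|\log\eps|$ (for $\eps$ small) yields $\mc G_i(t)\le C_2\log|\log\eps|$, uniformly in $t\in[0,T_\eps]$.

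The hard part will be the bookkeeping that keeps the bound at the sharp order $\log|\log\eps|$. Two error terms of different size appear along the way — the $O(|\log\eps|\log|\log\eps|)$ from $\log\lambda$ in $T_1$ and the $O(|\log\eps|)$ from the short-range region $\{|x-y|<\eps\}$ — and it is essential that both are divided by $\lambda_{\min}\sim\alpha|\log\eps|$ at the end, which is exactly what turns them into $O(\log|\log\eps|)$ and $O(1)$, respectively. The second delicate point, inherited from Lemma~\ref{lem:E>}, is that only the sum $\sum_j E_j(t)$ is bounded below, so the individual lower bound on $E_i(t)$ cannot be obtained directly and must instead be produced by feeding back the individual upper bounds on the remaining $E_j(t)$; this forces all the estimates to be carried out uniformly over the rings and over $t\in[0,T_\eps]$.
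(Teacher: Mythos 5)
Your proposal is correct and follows essentially the same route as the paper: expand the Green kernel $S(x,y)$ via Eqs.~\eqref{S=I}--\eqref{I1}, split the self-energy into the leading $\frac{\alpha}{2}a_i^2|\log\eps|^2$ piece, the $\mc G_i$ term, and an $O(|\log\eps|)$ short-range contribution, then play the upper bounds off against the lower bound of Lemma~\ref{lem:E>}, using $\mc G_j\ge 0$ to isolate each ring. The only (immaterial) differences are bookkeeping — the paper bounds $\sum_i G_i^{(2)}$ directly rather than extracting individual lower bounds on each $E_i$, and estimates the short-range term by a symmetric rearrangement where your crude $\|\omega_{i,\eps}\|_\infty$ bound over the disk $|x-y|<\eps$ works just as well.
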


\begin{proof}
Letting
\begin{equation}
\label{Ab}
A := (r_\eps+x_2)(r_\eps+y_2)\,,
\end{equation}
from Eqs.~\eqref{S=I} and \eqref{I1} we have,
\[
S(x,y)  \le \frac{\sqrt A}{2\pi} \Big( C_0 + \log(\sqrt{4A} + \sqrt{|x-y|^2+4A}) - \log |x-y| \Big)
\]
and, in view of Eqs.~\eqref{Teps} and \eqref{sep-supp},
\begin{equation}
\label{A<}
r_\eps - \bar d - \varrho \le \sqrt A \le r_\eps+ \bar d + \varrho\,, \quad |x-y| \le 2\varrho  \quad  \forall\, x,y\in \Lambda_{i,\eps}(t) \quad  \forall\, t \in [0,T_\eps]\,.
\end{equation}
Therefore, recalling Eqs.~\eqref{reps} and \eqref{Ei=}, there exists $C_2' = C_2'(\alpha,|a|,\bar d, \varrho)>0$ such that
\[
\begin{split}
E_i(t) & \le \frac 12 \int\!\rmd x \!\int\! \rmd y\, \omega_{i,\eps}(x,t) \omega_{i,\eps}(y,t)  \sqrt A \log( |x-y|^{-1}) \nonumber \\ & \quad + C_2' (\log|\log\eps|)|\log\eps| \quad \forall\, t\in [0,T_\eps] \quad \forall\,\eps\in (0,\eps_0)\,,
\end{split}
\]
which can be recast as
\[
E_i(t) \le  G_i^{(1)}(t) - G_i^{(2)}(t)+ G_i^{(3)}(t)  + C_2' (\log|\log\eps|)|\log\eps| \quad \forall\, t\in [0,T_\eps] \quad \forall\,\eps\in (0,\eps_0)\,,
\]
where
\[
\begin{split}
G_i^{(1)}(t) & = \frac 12 \int\!\rmd x \!\int\! \rmd y\, \omega_{i,\eps}(x,t) \omega_{i,\eps}(y,t) \sqrt A \log\Big( \frac1\eps\Big)\,, \\ G_i^{(2)}(t) & = \frac 12 \int\!\rmd x \!\int\! \rmd y\, \omega_{i,\eps}(x,t) \omega_{i,\eps}(y,t) \sqrt A \log \Big(\frac{|x-y|}{\eps}\Big)\id(|x-y| \ge \eps) \,,  \\ G _i^{(3)}(t) & = \frac 12 \int\!\rmd x \!\int\! \rmd y\, \omega_{i,\eps}(x,t) \omega_{i,\eps}(y,t)\sqrt A \log\Big(\frac\eps{|x-y|} \Big) \id(|x-y| < \eps)\,.
\end{split}
\]
By Eq.~\eqref{A<} it follows that there is $C_2'' = C_2''(\alpha,|a|,\bar d, \varrho)>0$ such that
\[
G_i^{(1)}(t) \le \frac\alpha2 a_i^2 |\log\eps|^2 + C_2'' |\log\eps| \quad \forall\, t\in [0,T_\eps] \quad \forall\, \eps\in (0,\eps_0)\,.
\]
Concerning $G_i^{(3)}(t)$, again by Eq.~\eqref{A<} we have
\[
G_i^{(3)}(t) \le \frac 12(r_\eps+\bar d+\varrho)  \int\!\rmd x \, |\omega_{i,\eps}(x,t)| \int\! \rmd y\,|\omega_{i,\eps}(y,t)| \log\Big(\frac\eps{|x-y|} \Big) \id(|x-y| < \eps)\,,
\]
and the integral with respect to the variable $y$ can be estimated performing a symmetrical rearrangement of the vorticity around the point $x$. More precisely, by Eqs.~\eqref{Mgamma}, \eqref{initial}, \eqref{cons-omr_ni}, and \eqref{sep-supp},
\begin{equation}
\label{omt<}
|\omega_{i,\eps}(y,t)| \le \frac{r_\eps+\bar d+\varrho}{r_\eps-\eps} \frac{M}{\eps^2} \quad \forall\, t\in [0,T_\eps]\,,
\end{equation}
so that, by Eq.~\eqref{ait} and since $\omega_{\eps,i}(\cdot,t)$ does not change sign, if $\bar r$ is such that 
\begin{equation}
\label{br=}
\frac{r_\eps+\bar d+\varrho}{r_\eps-\eps} \frac{M}{\eps^2} \,\pi \bar r^2 = |a_i|
\end{equation}
then
\begin{gather*}
\int\! \rmd y\, |\omega_{i,\eps}(y,t)| \log\Big(\frac\eps{|x-y|} \Big) \id(|x-y| \le \eps) \le \frac{r_\eps+\bar d+\varrho}{r_\eps-\eps} \frac{M}{\eps^2} \int_0^{\bar r\wedge \eps}\!\rmd r\, 2\pi r \log\Big(\frac\eps r\Big) \\ = \frac{2 |a_i|}{\bar r^2}\left(\frac{(\bar r\wedge \eps)^2}4 - \frac{(\bar r\wedge \eps)^2}2\log\frac{(\bar r\wedge \eps)}\eps\right).
\end{gather*}
Hence, the above integral is bounded uniformly with respect to $\eps$. Therefore, again recalling Eq.~\eqref{reps}, there exists $C_2''' = C_2'''(\alpha,|a|,\bar d, \varrho)>0$ such that
\[
G_i^{(3)}(t) \le C_2''' |\log\eps|\,.
\]

Gathering together the above estimates, we conclude that
\[
\begin{split}
\sum_i E_i(t)  & \le \frac \alpha 2 \sum_i a_i^2 |\log\eps|^2 - \sum_i G_i^{(2)}(t) \\ & \quad + NC_2' (\log|\log\eps|)|\log\eps| + N (C_2''+C_2''')|\log\eps|\,.
\end{split}
\]
Comparing with Eq.~\eqref{E>} we deduce that
\begin{equation}
\label{G<}
\sum_i G_i^{(2)}(t) \le (C_1+NC_2') (\log|\log\eps|)|\log\eps| + N(C_2''+C_2''') |\log\eps|\,.
\end{equation}
But, by Eqs.~\eqref{A<} and \eqref{reps},
\[
G_i^{(2)}(t) \ge \frac12 (\alpha|\log\eps|-\bar d-\varrho)\,  \mc G_i(t)\,,
\]
and Eq.~\eqref{conc1} follows from Eq.~\eqref{G<} for a suitable choice of $C_2$.
\end{proof}

\subsection{Mass concentration and bound on the moment of inertia}
\label{sec:3.2}

As a consequence of Proposition \ref{prop:conc1}, we next prove that the mass of each vortex ring is concentrated in a disk of vanishing size as $\eps \to 0$.

\begin{theorem}
\label{thm:conc}
There exist constants $C_j=C_j(\alpha,|a|,\bar d, \varrho)>0$, $j=3,4$, and points $q^{i,\eps}(t)\in \bb R^2$, $t\in [0,T_\eps]$, $\eps \in (0,\eps_0)$, such that if $R>\exp(C_3 \log|\log\eps|)$ then, for any $\eps \in (0,\eps_0)$,
\begin{align}
\label{eq:conc}
\frac{a_i}{|a_i|} \int_{\Sigma(q^{i,\eps}(t)|\eps R)}\!\rmd x\, \omega_{i,\eps}(x,t) \ge |a_i| - \frac{C_4 \log|\log\eps|}{\log R} \quad \forall\, t\in [0,T_\eps] \quad \forall\, i = 1,\ldots,N \,.
\end{align}
\end{theorem}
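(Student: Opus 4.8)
The plan is to read off both the concentration point and the quantitative estimate directly from the bound on $\mc G_i(t)$ furnished by Proposition \ref{prop:conc1}, via a standard averaging‑plus‑Chebyshev argument. Since each $\omega_{i,\eps}(\cdot,t)$ keeps a fixed sign, one has $\omega_{i,\eps}(x,t)\,\omega_{i,\eps}(y,t) = |\omega_{i,\eps}(x,t)|\,|\omega_{i,\eps}(y,t)| \ge 0$, so I would first write
\[
\mc G_i(t) = \int\!\rmd x\, |\omega_{i,\eps}(x,t)|\, g_i(x,t)\,, \qquad g_i(x,t) := \int\!\rmd y\, |\omega_{i,\eps}(y,t)| \log\Big(\frac{|x-y|}{\eps}\Big)\, \id(|x-y|\ge \eps)\,.
\]
The integrand defining $g_i$ is nonnegative (the logarithm is $\ge 0$ on $\{|x-y|\ge\eps\}$) and $g_i(\cdot,t)$ is finite, since by Eq.~\eqref{sep-supp} the support $\Lambda_{i,\eps}(t)$ lies in a fixed bounded region and by Eq.~\eqref{omt<} the density is bounded.

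Next I would extract the concentration point. By Eq.~\eqref{ait} we have $\int\!\rmd x\,|\omega_{i,\eps}(x,t)| = |a_i|$, so $\mc G_i(t)/|a_i|$ is precisely the average of $g_i(\cdot,t)$ against the probability measure $|a_i|^{-1}|\omega_{i,\eps}(\cdot,t)|\,\rmd x$. Since $g_i \ge 0$, its essential infimum over $\Lambda_{i,\eps}(t)$ does not exceed this average, so (up to an arbitrarily small enlargement of the constant) I may fix a point $q^{i,\eps}(t)\in \Lambda_{i,\eps}(t)$ with
\[
g_i(q^{i,\eps}(t),t) \le \frac{\mc G_i(t)}{|a_i|} \le \frac{C_2}{|a_i|}\,\log|\log\eps|\,,
\]
the last inequality being Proposition \ref{prop:conc1}. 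It is essential that this choice be made once and for all, independently of $R$, so that a single centre serves for all radii, as the statement requires.

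The estimate then follows from a Chebyshev‑type truncation. For any $R\ge 1$, restricting the integral defining $g_i(q^{i,\eps}(t),t)$ to the region $\{|q^{i,\eps}(t)-y|\ge \eps R\}$ and using that there $\log(|q^{i,\eps}(t)-y|/\eps) \ge \log R$, I obtain
\[
\log R \int_{|q^{i,\eps}(t)-y|\ge \eps R}\!\rmd y\, |\omega_{i,\eps}(y,t)| \le g_i(q^{i,\eps}(t),t) \le \frac{C_2}{|a_i|}\,\log|\log\eps|\,,
\]
whence the mass of the $i$‑th ring lying outside $\Sigma(q^{i,\eps}(t)|\eps R)$ is at most $C_2\, |a_i|^{-1}\log|\log\eps|/\log R$. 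Recalling that $\tfrac{a_i}{|a_i|}\,\omega_{i,\eps} = |\omega_{i,\eps}|$ and subtracting this tail from the total mass $|a_i|$ yields Eq.~\eqref{eq:conc} with $C_4 := C_2/\min_i |a_i|$. The threshold $R > \exp(C_3\log|\log\eps|)$, for $C_3$ chosen with $C_3 \ge C_4/\min_i|a_i|$, serves only to make the right‑hand side of Eq.~\eqref{eq:conc} positive, i.e.\ to render the bound non‑vacuous; the inequality itself holds for every $R\ge 1$.

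I expect no genuine obstacle in this step: the analytic difficulty has already been absorbed into Proposition \ref{prop:conc1}, and what remains is the routine passage from a bound on the logarithmic self‑interaction to a pointwise mass‑concentration statement. The only points demanding care are the nonnegativity of $\mc G_i(t)$ and the finiteness of $g_i$ (which legitimize the averaging), and the $R$‑independence of $q^{i,\eps}(t)$ (which is what allows one fixed centre to control the tail mass simultaneously at all scales).
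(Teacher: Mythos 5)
Your proof is correct, but it follows a genuinely different, and in fact shorter, route than the paper's. The paper proves Theorem \ref{thm:conc} by a coordinate-wise median/strip argument: it picks $x_1^*$ splitting the mass into $M_1,M_2,M_3$, uses the bound \eqref{conc1} to force first $2M_1M_3\log(2L_1)\le C_2\log|\log\eps|$ and then $M_2\ge a/8$ once $L_1\ge L_1^*=\frac12\exp\big(\frac{4C_2}{a^2}\log|\log\eps|\big)$, bounds the mass outside the strip $\{|x_1-x_1^*|\le 2\eps L_1\}$, and repeats the argument in the $x_2$-direction on the truncated vorticity $\tilde\omega$, finally combining the two strips into a disk of radius $2\sqrt2\,\eps L$; in that scheme the threshold $R>\exp(C_3\log|\log\eps|)$ is a genuine hypothesis of the argument (it guarantees $L\ge L^*$, hence $M_2\ge a/8$, before the second truncation step), not merely a non-vacuity condition. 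You instead extract the center in one stroke, choosing $q^{i,\eps}(t)$ where the nonnegative potential $g_i(\cdot,t)$ lies below its average against the probability measure $|a_i|^{-1}|\omega_{i,\eps}(\cdot,t)|\,\rmd x$ --- legitimate because $\omega_{i,\eps}$ keeps a fixed sign, so the integrand of $\mc G_i$ is nonnegative, and since $g_i(\cdot,t)$ is continuous and $\Lambda_{i,\eps}(t)$ is compact the infimum over the support is attained, so not even the ``small enlargement of the constant'' is needed --- and then apply a single Chebyshev truncation. What your route buys: the estimate \eqref{eq:conc} for \emph{every} $R>1$ (with $C_4=C_2/\min_i|a_i|$), the threshold on $R$ serving only to make the right-hand side positive, and a manifestly $R$-independent center; this last point requires a moment's thought in the paper's construction, where $x_2^*$ is defined through $\tilde\omega$, which depends on $L_1$ and hence, in principle, on the final radius. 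The paper's route gains nothing essential in exchange --- it too relies on the fixed sign of $\omega_{i,\eps}$ --- so your argument is a clean simplification. One cosmetic caveat: your $C_4$ depends on $\min_i|a_i|$ rather than on $|a|$ alone, but the paper's constants carry the same implicit dependence (its proof uses $a=|a_i|$ and absorbs factors $a^{-1}$, $a^{-2}$ for each $i$ into $C_3,C_4$), so this does not conflict with the statement.
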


\begin{proof}
In what follows we omit the explicit dependence on $i$ and $t$ by introducing the shortened notation
\[
\omega(x) := \frac{a_i}{|a_i|} \omega_{i,\eps}(x,t) =|\omega_{i,\eps}(x,t)| \,, \quad a= |a_i|\,.
\] 
Since $\int\!\rmd x\, \omega(x) = |a_i|$ we can find $x_1^*$ and $L_1>1$ such that
\[
M_1 := \int_{x_1< x_1^*-\eps L_1}\!\rmd x\,  \omega(x) \le \frac a2\,, \qquad  M_3 :=\int_{x_1> x_1^*+\eps L_1}\!\rmd x\,  \omega(x)\le \frac a2\,.
\] 
Setting
\[
M_2 := \int_{|x_1-x_1^*| \le \eps L_1}\!\rmd x\,  \omega(x)\,,
\]
from Eq.~\eqref{conc1}, by neglecting the vorticity in the central region, we deduce that $2M_1M_3 \log(2L_1) \le C_2 \log|\log\eps|$. Therefore,
\[
\begin{split}
a^2 & = (M_1+M_2+M_3)^2 \le \frac{a^2}2 + 2 M_2^2 + 2M_2(M_1+M_3) + 2 M_1 M_3 \\ & = \frac{a^2}2 + 2 a M_2 + 2 M_1M_3 \le \frac{a^2}2 + 2 a M_2 + C_2 \frac{\log|\log\eps|}{\log(2L_1)}\,,
\end{split}
\]
whence
\begin{equation}
\label{stco}
M_2 \ge \frac a4 - \frac{C_2 \log|\log\eps|}{2a \log(2L_1)}\,.
\end{equation}
In particular,
\begin{equation}
\label{a8}
M_2 \ge  \frac a8 \quad \forall \, L_1 \ge L_1^* := \frac 12 \exp\Big(\frac{4C_2}{a^2}\log|\log\eps|\Big)\,.
\end{equation}
Letting now
\[
M_1' := \int_{x_1< x_1^*- 2\eps L_1}\!\rmd x\,  \omega(x)\,, \quad  M_3' :=\int_{x_1> x_1^*+ 2\eps L_1}\!\rmd x\,  \omega(x)\,,
\]
from Eq.~\eqref{conc1}, neglecting some positive terms and using \eqref{a8}, we obtain
\[
\frac a8 (M_1'+M_3') \log L_1 \le  C_2 \log|\log\eps| \quad \forall \, L_1 \ge L_1^*\,,
\]
whence
\begin{equation}
\label{w>}
M_2' := \int_{|x_1-x_1^*| \le 2\eps L_1}\!\rmd x\,  \omega(x) \ge a -  \frac{8C_2\log|\log\eps|}{a\log L_1} \quad \forall \, L_1 \ge L_1^*\,.
\end{equation}
We can now repeat the same argument in the $x_2$-direction for the function
\[
\tilde \omega(x) = \omega(x) \id(|x_1-x_1^*| \le 2\eps L_1)\,,
\]
when $L_1>L_1^*$. It follows that there is $x_2^*$ such that
\begin{equation}
\label{tildw>}
\int_{|x_2-x_2^*| \le 2\eps L_2}\!\rmd x\,  \tilde \omega(x) \ge M_2' -  \frac{8C_2\log|\log\eps|}{M_2'\log L_2} \quad \forall \, L_2 \ge L_2^*\,,
\end{equation}
with now
\[
L_2^* := \frac 12 \exp\Big(\frac{4C_2}{(M_2')^2}\log|\log\eps|\Big) \le \frac 12 \exp\Big(\frac{256 C_2}{a^2}\log|\log\eps|\Big) \,,
\]
where in the last inequality we used that $M_2'\ge M_2 \ge a/8$ by Eq.~\eqref{a8}.

Therefore, letting $x^* = (x_1^*,x_2^*)$ and choosing
\[
L_1 = L_2 = L > \frac 12 \exp\Big(\frac{256 C_2}{a^2}\log|\log\eps|\Big),
\]
from Eqs.~\eqref{w>} and \eqref{tildw>} we get
\[
\begin{split}
\int_{\Sigma(x^*|2\eps \sqrt 2 L)} \! \rmd x\, \omega(x) & \ge \int_{|x_2-x_2^*| \le 2\eps L}\!\rmd x\,  \tilde \omega(x) \ge  a -  \frac{8C_2\log|\log\eps|}{a\log L} - \frac{8C_2\log|\log\eps|}{M_2'\log L}  \\ & \ge a - \frac{72 C_2\log|\log\eps|}{a\log L}\,,
\end{split}
\]
where we used again $M_2'\ge a/8$ in the last inequality. Coming back to the original notation, Eq.~\eqref{eq:conc} follows with $q^{i,\eps}(t) = x^*$ and suitable choices of $C_3,C_4>0$.
\end{proof}

We denote by $B^{i,\eps}(t)$ the center of vorticity of the $i$-th vortex ring, defined by
\begin{equation}
\label{c.m.}
B^{i,\eps}(t) := \frac{1}{a_i} \int\! \rmd x\, x\,\omega_{i,\eps}(x,t)\,, 
\end{equation}
and by $J_{i,\eps}(t)$ the corresponding moment of inertia, i.e.,
\begin{equation}
\label{J}
J_{i,\eps}(t) : = \int \!\rmd x\,  |x-B^{i,\eps}(t)|^2 |\omega_{i,\eps}(x,t)| = \frac{a_i}{|a_i|} \int \!\rmd x\,  |x-B^{i,\eps}(t)|^2 \omega_{i,\eps}(x,t) \,.
\end{equation}

From Theorem \ref{thm:conc}, we deduce that the moment of inertia vanishes as $\eps\to 0$. This is the content of the following theorem.

\begin{theorem}
\label{thm:J<}
Given $\gamma\in (0,1)$ there exists $\eps_\gamma \in (0,\eps_0)$ such that
\begin{equation}
\label{eq:J<}
J_{i,\eps}(t) \le \frac{1}{|\log\eps|^\gamma} \quad \forall\, t\in [0,T_\eps] \quad \forall\,\eps\in (0,\eps_\gamma)\,.
\end{equation}
\end{theorem}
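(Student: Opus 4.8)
The plan is to read off the decay of the moment of inertia directly from the mass concentration established in Theorem \ref{thm:conc}, avoiding any delicate tracking of the center of vorticity itself. The crucial observation is the parallel-axis identity: since $\omega_{i,\eps}(\cdot,t)$ has constant sign and $B^{i,\eps}(t)$ defined in Eq.~\eqref{c.m.} is the barycenter of the positive measure $|\omega_{i,\eps}(\cdot,t)|\,\rmd x$ of total mass $|a_i|$, for every $c\in\bb R^2$ one has
\[
\int\!\rmd x\, |x-c|^2\, |\omega_{i,\eps}(x,t)| = J_{i,\eps}(t) + |a_i|\,|B^{i,\eps}(t)-c|^2 \ge J_{i,\eps}(t)\,,
\]
which follows by expanding the square and using $\int\!\rmd x\,(x-B^{i,\eps}(t))\,|\omega_{i,\eps}(x,t)| = 0$. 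Hence it suffices to bound the left-hand side for the convenient choice $c=q^{i,\eps}(t)$, the concentration point furnished by Theorem \ref{thm:conc}, and I never need to locate $B^{i,\eps}(t)$ precisely.

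First I would fix a radius $R>\exp(C_3\log|\log\eps|)$ and split $\int\!\rmd x\,|x-q^{i,\eps}(t)|^2|\omega_{i,\eps}(x,t)|$ according to whether $x$ lies inside or outside the disk $\Sigma(q^{i,\eps}(t)|\eps R)$. On the inner region $|x-q^{i,\eps}(t)|\le \eps R$, so using the conservation of mass Eq.~\eqref{ait} that part contributes at most $|a_i|(\eps R)^2$. On the outer region I would use that, up to time $T_\eps$, the support $\Lambda_{i,\eps}(t)$ is contained in $\Sigma(\zeta^i(t)|\varrho)$ by Eq.~\eqref{Teps} and $q^{i,\eps}(t)$ lies in the bounding box of that support (it is the balance point constructed in the proof of Theorem \ref{thm:conc}), so that $|x-q^{i,\eps}(t)|$ is bounded by a constant $D=D(\bar d,\varrho)$ there; the outer mass is exactly what Theorem \ref{thm:conc} controls, namely it is at most $C_4\log|\log\eps|/\log R$. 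Altogether I arrive at
\[
J_{i,\eps}(t) \le |a_i|(\eps R)^2 + D^2\,\frac{C_4\log|\log\eps|}{\log R} \qquad \forall\, t\in[0,T_\eps]\,,
\]
uniformly in $i$.

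It then remains to optimize over $R$. Choosing, for instance, $R=\eps^{-1/2}$ makes the first term $|a_i|\eps$ and gives $\log R = \tfrac12|\log\eps|$, so the second term is of order $\log|\log\eps|/|\log\eps|$; the admissibility constraint $R>\exp(C_3\log|\log\eps|)$ reduces to $\tfrac12|\log\eps|>C_3\log|\log\eps|$, valid for all small $\eps$. Thus $J_{i,\eps}(t)\le C\big(\eps + \log|\log\eps|/|\log\eps|\big)$ uniformly in $t\in[0,T_\eps]$ and $i$. Since for any $\gamma\in(0,1)$ one has both $\eps\le|\log\eps|^{-\gamma}$ and, more importantly, $\log|\log\eps|\le|\log\eps|^{1-\gamma}$ once $\eps$ is small enough, each term is eventually bounded by $|\log\eps|^{-\gamma}$, yielding Eq.~\eqref{eq:J<} for a suitable $\eps_\gamma$.

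There is no serious obstacle in this argument; the only point demanding care is the bookkeeping of the nested logarithms. The slowly vanishing outer-mass contribution behaves like $\log|\log\eps|/|\log\eps|$, and it is precisely the requirement that this beat the prescribed rate $|\log\eps|^{-\gamma}$ that forces $\gamma<1$ in the statement. I would therefore double-check that the polynomially small inner term does not dictate the choice of $R$ and that the admissibility window $R>\exp(C_3\log|\log\eps|)$ is compatible with $R=\eps^{-1/2}$ for the whole range $\eps\in(0,\eps_\gamma)$.
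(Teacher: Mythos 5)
Your proof is correct and follows essentially the same route as the paper's: both exploit that $J_{i,\eps}(t)=\min_{q}\int\!\rmd x\,|x-q|^2|\omega_{i,\eps}(x,t)|$ (your parallel-axis identity is this minimization in disguise), take $q=q^{i,\eps}(t)$ from Theorem \ref{thm:conc}, split the second moment inside/outside $\Sigma(q^{i,\eps}(t)|\eps R)$, and bound the outer factor $|x-q^{i,\eps}(t)|$ by a constant using Eq.~\eqref{sep-supp} together with the fact that the concentration disk meets $\Lambda_{i,\eps}(t)$. The only cosmetic difference is your choice $R=\eps^{-1/2}$ in place of the paper's $R_\eps=\exp(|\log\eps|^{\gamma'})$ with $\gamma'\in(\gamma,1)$; both are admissible since they exceed $\exp(C_3\log|\log\eps|)$, and yours even yields the slightly sharper bound $O(\log|\log\eps|/|\log\eps|)$, which implies Eq.~\eqref{eq:J<} for every $\gamma\in(0,1)$.
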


\begin{proof}
Without loss of generality we assume hereafter $a_i>0$, i.e., $\omega_{i,\eps}(t) \ge 0$ so that 
\[
J_{i,\eps}(t) : = \int \!\rmd x\,  |x-B^{i,\eps}(t)|^2 \omega_{i,\eps}(x,t)\,.
\]
Given $\gamma\in (0,1)$, we choose $\gamma' \in (\gamma,1)$ and let
\begin{equation}
\label{eq:Sgt}
\Sigma_{i,\eps}(t) := \Sigma(q^{i,\eps}(t)|\eps R_\eps)\,, \quad R_\eps := \exp(|\log\eps|^{\gamma'})\,.
\end{equation}
By Theorem \ref{thm:conc}, provided $\eps\in (0,\eps_0)$ is chosen sufficiently small to have $R_\eps >\exp(C_3\log|\log\eps|)$, we can apply Eq.~\eqref{eq:conc} with $R=R_\eps$ getting
\begin{equation}
\label{stmass}
\int_{ \Sigma_{i,\eps}(t)}\!\rmd y\, \omega_\eps(y,t) \ge a_i - \frac{C_4 \log|\log\eps| }{|\log\eps|^{\gamma'}}  \quad \forall\, t\in [0,T_\eps]\,.
\end{equation}
Now, by the definition of center of vorticity,
\[
\begin{split}
J_{i,\eps}(t) & = \min_{q\in\bb R^2} \int\! \rmd x\,  |x-q|^2\omega_{i,\eps}(x,t)  \le  \int\! \rmd x\, |x-q^{i,\eps}(t)|^2\omega_{i,\eps}(x,t) \\ & = \int_{\Sigma_{i,\eps}(t)}\! \rmd x\, |x-q^{i,\eps}(t)|^2\omega_{i,\eps}(x,t) + \int_{\Sigma_{i,\eps}(t)^\complement}\! \rmd x\, |x-q^{i,\eps}(t)|^2\omega_{i,\eps}(x,t)  \\ & \le a_i (\eps R_\eps)^2 + \frac{C_4 \log|\log\eps| }{|\log\eps|^{\gamma'}}\max_{x\in\Lambda_{i,\eps}(t)} |x-q_\eps(t)|^2.
\end{split}
\]
Now, for $t\in [0,T_\eps]$ and $\eps$ small enough,
\[
\max_{x\in\Lambda_{i,\eps}(t)} |x-q^{i,\eps}(t)|^2 \le 2 \max_{x\in\Lambda_{i,\eps}(t)} |x|^2 + 2|q^{i,\eps}(t)|^2 \le 2(\bar d + \varrho)^2 + 2(\bar d +\varrho+\eps R_\eps)^2,
\]
where we used Eq.~\eqref{sep-supp} and that, in view of Eq.~\eqref{stmass}, $\Sigma_{i,\eps}(t) \cap\Lambda_{i,\eps}(t) \ne \emptyset$. Then, the lemma follows from the above estimates.
\end{proof}

\section{Iterative procedure}
\label{sec:4}

As already observed, our goal is to show that the time $T_\eps$ does not vanish as $\eps\to 0$. To this purpose, we will prove that there is $T_\varrho'\in (0,T]$ such that the condition on the supports $\Lambda_{i,\eps}(t)$ in the definition of $T_\eps$ can be enforced up to time $T_\varrho' \wedge T_\eps$ for any $\eps$ small enough. By continuity, this implies that $T_\eps\ge T_\varrho'$ for any $\eps$ small enough (and Theorem \ref{thm:1} will follow with $T_\varrho = T_\varrho'$).

A key step in this strategy, which is the content of the present section, is to prove that the amount of vorticity $\omega_{i,\eps}(x,t)$ outside any disk centered in $B^{i,\eps}(t)$ and whose radius is fixed independent of $\eps$ is indeed extremely small, say $\eps^\ell$ up to a time $\bar T_\ell \wedge T_\eps$.

To this end, a naive application of the iterative procedure adopted in the quoted papers fails in this case, because of the worst estimate Eq.~\eqref{eq:J<} on the moment of inertia. To overcome this difficulty, we notice that this estimate is sufficient to apply an iterative argument based on a larger space step, which gives a weaker estimate. But this estimate is good enough to implement a new iterative argument, now based on the correct smaller space step, which leads to the result.

The following lemma, whose proof is given in Appendix \ref{app:a}, will be repeatedly used in the sequel.
 
\begin{lemma}
\label{lem:Hdec}
Recall $H(x,y) = (H_1(x,y),H_2(x,y))$ is defined in Eqs.~\eqref{H1}, \eqref{H2}. There exists $C_H = C_H(\alpha,\bar d, \varrho) >0$ such that, for any $i\ne j$, $t \in [0,T]$, and $\eps\in (0,\eps_0)$,
\begin{equation}
\label{stimH}
|H(x,y)| + \|D_xH(x,y)\| \le C_H\quad \forall\, x\in\Sigma(\zeta^i(t)|\varrho) \quad\forall\, y \in\Sigma(\zeta^j(t)|\varrho)
\end{equation}
($D_xH(x,y)$ denotes the Jacobian matrix of $H(x,y)$ with respect to the variable $x$). Moreover, $H(x,y)$ admits the decomposition,
\begin{equation}
\label{sH}
H(x,y) = K(x-y) + L(x,y) + \mc R(x,y), 
\end{equation}
where $K(\cdot)$ is defined in \eqref{Kern},
\begin{equation}
\label{sL}
L(x,y) = \frac{1}{4\pi (r_\eps+x_2)} \log\frac {1+|x-y|}{|x-y|}\begin{pmatrix} 1 \\ 0 \end{pmatrix},
\end{equation}
and, for a suitable constant $\bar C>0$,
\begin{equation}
\label{sR}
|\mc R(x,y)| \le \bar C \frac{1+ r_\eps + x_2+\sqrt{A} \big(1+ |\log A|\big)}{(r_\eps + x_2)^2}\,,
\end{equation}
with $A$ as in Eq.~\eqref{Ab}.
\end{lemma}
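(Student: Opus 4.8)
The goal is to analyze the kernel $H(x,y)$ defined by the $\theta$-integrals in Eqs.~\eqref{H1}, \eqref{H2}, extract its leading singular behavior as $x\to y$, and show it coincides with the planar point-vortex kernel $K(x-y)$ plus a drift correction $L(x,y)$ and a controlled remainder $\mc R(x,y)$. Let me think about how I'd structure this.

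The key quantities: $H_1$ has numerator $(r_\eps+y_2)(r_\eps+y_2-(r_\eps+x_2)\cos\theta)$ and $H_2$ has $(r_\eps+y_2)(x_1-y_1)\cos\theta$, both over $[|x-y|^2+2(r_\eps+x_2)(r_\eps+y_2)(1-\cos\theta)]^{3/2}$.

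The bound Eq.~\eqref{stimH} for separated supports: when $|x-y|\ge 2\varrho$ and both points are bounded ($|x|,|y|\le \bar d+\varrho$), the denominator is bounded below and the integrand is smooth, so $|H|$ and $\|D_xH\|$ are bounded. That's the easy part — just direct estimation with the separation preventing the singularity.

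The decomposition is the substantive part. Here $A=(r_\eps+x_2)(r_\eps+y_2)$ is large (order $r_\eps^2\sim|\log\eps|^2$). The standard approach: substitute $s=|x-y|/\sqrt{A}$ which is small, write the $\theta$-integral $\frac{1}{2\pi}\int_0^\pi \frac{\cos\theta}{[s^2+2(1-\cos\theta)]^{1/2}}d\theta$ type integrals, and use the asymptotic expansion of the complete elliptic integrals for small $s$ — this is exactly the $I_0(s)\approx\log\frac{2+\sqrt{s^2+4}}{s}$ behavior already invoked in Eq.~\eqref{I1}. The $\log(1/|x-y|)$ singularity comes out, and the $\nabla^\perp\log|x-y|$ structure of $K$ should emerge from differentiating appropriately. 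Let me think about which component gives $K$ and which gives $L$. The kernel $K(x-y)=-\frac{1}{2\pi}\nabla^\perp\log|x-y| = -\frac{1}{2\pi}\frac{(x_2-y_2,-(x_1-y_1))}{|x-y|^2}$, wait with the perp convention $v^\perp=(v_2,-v_1)$, so $\nabla^\perp f = ((\partial f)_2, -(\partial f)_1)$... the singular part $\sim 1/|x-y|$. The drift term $L$ is the non-singular-but-log piece $\frac{1}{4\pi(r_\eps+x_2)}\log\frac{1+|x-y|}{|x-y|}$, which is the finite-radius correction responsible for the self-induced axial velocity. The remainder $\mc R$ collects everything of lower order, bounded by Eq.~\eqref{sR} which is $O(1/r_\eps)\sim O(1/|\log\eps|)$ after dividing through.

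The plan is as follows. First I would record the elliptic-integral asymptotics: for the denominators in Eqs.~\eqref{H1}–\eqref{H2}, rescale via $A$ from Eq.~\eqref{Ab} and write each $\theta$-integral in terms of functions of $s=|x-y|/\sqrt A$ analogous to $I_0(s)$ in Eq.~\eqref{I1}, together with the companion integrals obtained by inserting $\cos\theta$ and $1$ in the numerator. The crucial inputs are the small-$s$ expansions, where the leading term is logarithmic ($\sim\log(1/s)$) and the corrections are $O(s^2\log s)$; these are classical and can be cited from \cite{MN} as in Eq.~\eqref{I1}.

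Next, I would substitute these expansions into $H_1$ and $H_2$ and collect terms by order. The dominant contribution produces the singular kernel $\frac{(x-y)^\perp}{|x-y|^2}$ structure, which I would match to $K(x-y)$ using Eq.~\eqref{Kern}; the point here is that the $\cos\theta$-weighting in the numerators, combined with $\partial_x$ acting on the $\log$ of the denominator, reproduces the $\nabla^\perp\log|x-y|$ form. The next-order contribution, coming from the finite value of $r_\eps$ (i.e., the curvature of the ring), yields the axial drift term $L(x,y)$ of Eq.~\eqref{sL}: this is where the factor $\frac{1}{4\pi(r_\eps+x_2)}\log\frac{1+|x-y|}{|x-y|}$ appears, carrying the $(1,0)^\top$ direction because the self-induced velocity of a curved filament is axial. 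Everything remaining is gathered into $\mc R(x,y)$, and I would bound it termwise to obtain Eq.~\eqref{sR}, tracking the powers of $A$ and $r_\eps+x_2$ carefully and using $\sqrt A\le r_\eps+x_2+\text{const}$.

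I expect the main obstacle to be the bookkeeping in the remainder estimate Eq.~\eqref{sR}, rather than any single hard inequality. The delicate point is isolating \emph{exactly} the $\log\frac{1+|x-y|}{|x-y|}$ combination in $L$ so that the remainder is genuinely $O(1/r_\eps)$ uniformly, including in the regime $|x-y|\to 0$ where both $K$ and $L$ are singular: one must verify that the difference $H-K-L$ has its singularities cancel to the stated order, leaving only the integrable/bounded logarithmic growth encoded by the $(1+|\log A|)$ factor. Keeping the $x$-dependence uniform (since $x_2$ ranges over an $O(\varrho)$ window around $0$ while $r_\eps\to\infty$) is what forces the precise form of the denominators $(r_\eps+x_2)^2$ in Eq.~\eqref{sR}. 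Since this is routine but lengthy, I would relegate it to Appendix~\ref{app:a} as the authors do, presenting only the expansion strategy in the main text.
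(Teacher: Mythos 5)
Your outline matches the architecture of the paper's proof in Appendix \ref{app:a} (rescale by $\sqrt A$, expand the $\theta$-integrals, match leading terms to $K$ and $L$, bound the remainder termwise), but the key analytic input is misidentified, and as written the extraction of the $K$ term would not go through. The velocity kernel \eqref{H1}--\eqref{H2} is governed not by $I_0$ but by the companion integrals
\[
I_1(s)=\int_0^\pi\!\rmd\theta\,\frac{\cos\theta}{[s^2+2(1-\cos\theta)]^{3/2}}\,,\qquad
I_2(s)=\int_0^\pi\!\rmd\theta\,\frac{1-\cos\theta}{[s^2+2(1-\cos\theta)]^{3/2}}\,,
\]
through the exact identity
\[
H(x,y)=-\frac{I_1\big(|x-y|/\sqrt A\big)}{2\pi(r_\eps+x_2)}\,\frac{(x-y)^\perp}{\sqrt A}
+\frac{I_2\big(|x-y|/\sqrt A\big)}{2\pi(r_\eps+x_2)}\sqrt{\frac{r_\eps+y_2}{r_\eps+x_2}}\begin{pmatrix}1\\0\end{pmatrix},
\]
and the leading behavior of $I_1$ is \emph{algebraic}, not logarithmic: $I_1(s)=s^{-2}+\frac14\log\frac{s}{1+s}+\frac{c_1(s)}{1+s}$ with $c_1$ uniformly bounded on $(0,\infty)$ (these expansions are in \cite{Mar99}, not \cite{MN}). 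The $s^{-2}$ term, multiplied by the prefactor above, yields $K(x-y)$ of \eqref{Kern} directly, at the cost only of the remainder $\frac{1}{2\pi}\bigl(1-\sqrt{(r_\eps+y_2)/(r_\eps+x_2)}\bigr)\frac{(x-y)^\perp}{|x-y|^2}$, which is $O\big((r_\eps+x_2)^{-1}\big)$ by $\bigl|1-\sqrt{(r_\eps+y_2)/(r_\eps+x_2)}\bigr|\le|x-y|/(r_\eps+x_2)$. Your alternative mechanism --- that the $|x-y|^{-2}$ singularity ``emerges from $\partial_x$ acting on the log of the denominator,'' i.e.\ from differentiating the $I_0$ expansion behind \eqref{I1} --- is a genuine gap given what you cite: \eqref{I1} is a sup-norm bound on the error $I_0(s)-\log\frac{2+\sqrt{s^2+4}}{s}$, and a sup-norm bound on an error gives no control on its derivative. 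Making that route rigorous would require derivative estimates on the $I_0$ remainder, which amounts to proving the $I_1,I_2$ expansions anyway.

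A second, smaller mismatch: you propose small-$s$ asymptotics with $O(s^2\log s)$ corrections, whereas \eqref{sR} is a bound valid for all $x,y$, and the paper obtains it from expansions of $I_1,I_2$ valid \emph{uniformly} in $s\in(0,\infty)$ with correction terms $c_i(s)/(1+s)$. With those in hand, the remainder splits into six explicit pieces --- including the swap $\log\frac{1+s}{s}\mapsto\log\frac{1+|x-y|}{|x-y|}$ inside $L$, which costs at most $\frac12|\log A|$ and accounts for the $(1+|\log A|)$ factor in \eqref{sR}, and the dropping of the factor $\sqrt{(r_\eps+y_2)/(r_\eps+x_2)}$ --- each bounded by elementary inequalities. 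In the regime where the lemma is actually applied ($|x-y|$ bounded, $\sqrt A\sim\alpha|\log\eps|$, so $s$ small) your restricted expansion would suffice, so this second point is about matching the statement as given rather than a fatal flaw; the substantive gap is the first one.
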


We decompose the velocity field Eq.~\eqref{u=} according to  Eq.~\eqref{sH}, writing
\begin{equation}
\label{decom_u}
u(x,t) = (K*\omega_{i,\eps})(x,t) + u^i_L(x,t) + u^i_{\mc R}(x,t) + F^i(x,t)\,,
\end{equation}
where $(K*\omega_{i,\eps})(\cdot, t)$ denotes the convolution of $K$ and $\omega_{i,\eps}(\cdot,t)$,
\begin{equation}
\label{uL=w}
u^i_L(x,t) := \int\!\rmd y\, L(x,y)\, \omega_{i,\eps}(y,t) = w^i_L(x,t) \begin{pmatrix} 1 \\ 0 \end{pmatrix},
\end{equation}
with
\begin{equation}
\label{wL=}
w^i_L(x,t) := \frac{1}{4\pi (r_\eps + x_2)} \int\!\rmd y\, \omega_{i,\eps}(y,t)\log\frac {1+|x-y|}{|x-y|}\,,
\end{equation}
and 
\[ \quad u^i_{\mc R}(x,t) := \int\!\rmd y\, \mc R(x,y)\, \omega_{i,\eps}(y,t)\,,\quad  F^i(x,t) := \sum_{j\ne i}  \int\!\rmd y\, H(x,y)\, \omega_{j,\eps}(y,t)\,.
\]

By Eq.~\eqref{sep-disks} and in view of Eqs.~\eqref{stimH} and \eqref{sR}, for any $\eps\in (0,\eps_0)$,
\begin{equation}
\label{stimF}
|F^i(x,t)| + \|D_xF^i(x,t)\| \le C_F \quad \forall\, x\in \Sigma(\zeta^i(t)|\varrho) \quad  \forall\, t \in [0,T_\eps]\,,
\end{equation}
with $C_F := |a| C_H$, and there is $C_{\mc R} = C_{\mc R}(\alpha,|a|,\bar d, \varrho) >0$ such that
\begin{equation}
\label{stimR}
|u^i_{\mc R}(x,t)| \le \frac{C_{\mc R}\log|\log\eps|}{|\log\eps|} \quad \forall\, x\in \bigcup_j \Sigma(\zeta^j(t)|\varrho) \quad  \forall\, t \in [0,T_\eps]\,.
\end{equation}
Moreover, since $s \mapsto\log[(1+s)/s]$ is decreasing, the integral appearing in the definition of $w^i_L(x,t)$ can be estimated performing a symmetrical rearrangement of the vorticity around the point $x$. Therefore, recalling Eqs.~\eqref{sep-disks} and \eqref{omt<}, if $(x,t) \in \Sigma(\zeta^i(t)|\varrho) \times [0,T_\eps]$ then
\begin{align*}
|w^i_L(x,t)| & \le \frac{1}{4\pi(r_\eps-\bar d-\varrho)} \int\!\rmd y\, \log\frac {1+|x-y|}{|x-y|} |\omega_{i,\eps}(y,t)| \\ & \le \frac{1}{4\pi(r_\eps-\bar d-\varrho)} \frac{r_\eps+\bar d+\varrho}{r_\eps-\eps} \frac{M}{\eps^2}  \int_0^{\bar r}\!\rmd r\, 2\pi r \, \log\frac{1+r}{r} \\ & =  \frac{M(r_\eps+\bar d+\varrho)}{2(r_\eps-\bar d-\varrho)(r_\eps-\eps)\eps^2} \bigg\{\frac{\bar r^2}{2} \log\frac{1+\bar r}{\bar r} + \frac 12 \int_0^{\bar r}\!\rmd r\, \frac{r}{1+r} \bigg\},
\end{align*}
with $\bar r$ as in Eq.~\eqref{br=}. Therefore there is $C_L = C_L(\alpha,|a|,\bar d, \varrho)>0$ such that, for any $\eps\in (0,\eps_0)$,
\begin{equation}
\label{uL<}
|u^i_L(x,t)| = |w^i_L(x,t)| \le \frac{|a_i|}{4\pi\alpha} + \frac{C_L}{|\log\eps|} \quad \forall\, x\in \Sigma(\zeta^i(t)|\varrho) \quad \forall\, t\in [0,T_\eps]\,.
\end{equation}

\begin{proposition}
\label{prop:1}
Let
\[
m^i_t(R) := \int_{\Sigma(B^{i,\eps}(t)|R)^\complement}\!\rmd x\, |\omega_{i,\eps}(x,t)| =  \frac{a_i}{|a_i|} \int_{\Sigma(B^{i,\eps}(t)|R)^\complement}\!\rmd x\, \omega_{i,\eps}(x,t)
\]
denote the amount of vorticity of the $i$-th ring outside the disk $\Sigma(B^{i,\eps}(t)|R)$ at time $t$. Then, given $R>0$, for each $\ell>0$ there is $\widetilde T_\ell \in (0, T]$ such that 
\begin{equation}
\label{smt1}
m^i_t(R) \le \frac{1}{|\log\eps|^{\ell}} \quad \forall\, t\in[0, \widetilde T_\ell  \wedge T_\eps] \quad \forall\, i = 1,\ldots,N \,.
\end{equation}
for any $\eps\in (0,\eps_0)$ sufficiently small.
\end{proposition}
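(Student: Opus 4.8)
The plan is to control the time derivative of the out-of-disk mass $m^i_t(R)$ via the weak formulation \eqref{weq} applied to a smooth radial cutoff, and then close a differential inequality using the concentration input of Theorem \ref{thm:J<} together with the velocity decomposition \eqref{decom_u}. First I would fix a smooth nonincreasing function $W\colon\bb R\to[0,1]$ with $W(s)=1$ for $s\le 1$ and $W(s)=0$ for $s\ge 2$, and define, for a radius $R$, the test function $f(x,t)=W_R(|x-B^{i,\eps}(t)|)$ with $W_R(s)=W(2s/R)$, so that $f$ measures the mass concentrated near the (moving) center of vorticity $B^{i,\eps}(t)$ and $1-f$ controls $m^i_t(R)$. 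The quantity I actually track is a mollified version of $m^i_t(R)$; differentiating it through \eqref{weq} produces a term $\int\rmd x\,\omega_{i,\eps}\,[\,u\cdot\nabla f+\partial_t f\,]$, and the gradient $\nabla f$ is supported only in the annulus $R/2\le |x-B^{i,\eps}(t)|\le R$.

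The core of the estimate is to bound the relative velocity $u(x,t)-\dot B^{i,\eps}(t)$ on that annulus. Using \eqref{decom_u}, the field splits into the self-interaction convolution $K*\omega_{i,\eps}$, the drift $u^i_L$, the remainder $u^i_{\mc R}$, and the external field $F^i$. The drift $u^i_L$ is nearly constant (it equals $\tfrac{a_i}{4\pi\alpha}(1,0)+O(1/|\log\eps|)$ by \eqref{uL<}) and the external field $F^i$ is Lipschitz with constant $C_F$ by \eqref{stimF}, so when subtracted against the corresponding near-constant pieces of $\dot B^{i,\eps}$ both contribute Lipschitz-in-$R$ terms that feed linearly into the differential inequality. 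The remainder $u^i_{\mc R}$ is uniformly $O((\log|\log\eps|)/|\log\eps|)$ by \eqref{stimR}. The genuinely singular contribution is the self-interaction term $K*\omega_{i,\eps}$, and this is exactly where the moment-of-inertia bound enters: on the annulus $|x-B^{i,\eps}(t)|\ge R/2$, the Biot--Savart integral against the vorticity mass lying \emph{inside} a small disk around $B^{i,\eps}(t)$ is $O(1/R)$ times the enclosed mass, while the contribution from the far mass is controlled by $m^i_t(R)$ itself; the Chebyshev-type bound $\int_{\Sigma(B^{i,\eps}(t)|r)^\complement}|\omega_{i,\eps}|\le J_{i,\eps}(t)/r^2\le |\log\eps|^{-\gamma}/r^2$ from Theorem \ref{thm:J<} is what makes the tail summable and small.

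Assembling these pieces yields a differential inequality of the schematic form $\tfrac{\rmd}{\rmd t}m^i_t(R)\le C\, m^i_t(R)+ (\text{small})$, where the small term is a fixed negative power of $|\log\eps|$ coming from the moment-of-inertia tail and the remainder estimate, and the linear factor $C$ absorbs the Lipschitz contributions of $F^i$ and the self-field on the annulus. Integrating via Gr\"onwall over a short time interval $[0,\widetilde T_\ell\wedge T_\eps]$, and using that the initial mass outside $\Sigma(B^{i,\eps}(0)|R)$ is zero for $\eps$ small (since $\Lambda_{i,\eps}(0)\subset\Sigma(\zeta^i|\eps)$ is far inside the disk), gives $m^i_t(R)\le |\log\eps|^{-\ell}$ on a time interval whose length $\widetilde T_\ell$ can be chosen independent of $\eps$ by taking $\gamma$ close enough to $1$ to beat the prescribed power $\ell$. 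The main obstacle, as the authors flag in the preamble to this section, is precisely that the available moment-of-inertia bound \eqref{eq:J<} degrades like $|\log\eps|^{-\gamma}$ rather than a genuine power of $\eps$; this forces the iteration to be carried out in two stages (a coarse step producing an intermediate, weaker bound, then a refined step on the correct small scale), and getting the first coarse estimate strong enough to seed the second is the delicate point.
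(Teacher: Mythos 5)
Your setup matches the paper's: a mollified radial cutoff around the moving center $B^{i,\eps}(t)$, the weak formulation \eqref{weq}, the decomposition \eqref{decom_u} with \eqref{stimF}, \eqref{stimR}, \eqref{uL<}, the moment-cancellation trick for the self-interaction on the annulus, and Chebyshev with Theorem \ref{thm:J<}. But the closure mechanism you propose cannot yield the stated conclusion. Your Gr\"onwall scheme produces an inequality of the schematic form $\frac{\rmd}{\rmd t} m^i_t(R) \le C\, m^i_t(R) + s_\eps$ with source $s_\eps$ controlled by the moment-of-inertia tail, i.e.\ $s_\eps = O(|\log\eps|^{-\gamma})$; since $m^i_0(R)=0$, integration gives at best $m^i_t(R) \lesssim |\log\eps|^{-\gamma}$. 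Theorem \ref{thm:J<} only holds for $\gamma\in(0,1)$ --- there is no freedom to take ``$\gamma$ close enough to $1$ to beat the prescribed power $\ell$'', because the proposition must hold for \emph{every} $\ell>0$, in particular $\ell\ge 1$. A single differential inequality with a $|\log\eps|^{-\gamma}$ source is structurally incapable of producing $|\log\eps|^{-\ell}$. A second, related defect: the derivative of the mollified mass at radius $R$ is controlled by the mass outside the \emph{smaller} disk of radius $R-h$ (the inequality goes the wrong way to close on a single quantity), so the linear term $C\,m^i_t(R)$ in your schematic inequality is not actually available.

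The paper's mechanism is different and essential. One iterates the integral inequality $\mu_t(R_j,h) \le \mu_0(R_j,h) + A_\eps(R_j,h)\int_0^t \rmd s\, \mu_s(R_{j+1},h)$ over a nested family of radii $R_j$ decreasing from $R-h$ to $R/2$, with space step $h = R/(2n+2)$ and $n = \lfloor \log|\log\eps|\rfloor$ iterations. Since the initial data are supported in $\Sigma(\zeta^i|\eps)$, all the terms $\mu_0(R_j,h)$ vanish, and only the last term survives, seeded by the Chebyshev bound $\mu_s(R_{n+1},h)\le 9/(R^2|\log\eps|^\gamma)$. The factorial gain gives $m^i_t(R) \le \frac{9}{R^2|\log\eps|^\gamma}\left(\frac{\rme A_* t}{R}\right)^{n+1}$, and choosing $\widetilde T_\ell = (R/A_*)\,\rme^{-\ell-1}\wedge T$ makes the bracket at most $\rme^{-\ell}$, whence $(\rme A_* t/R)^{n+1} \le |\log\eps|^{-\ell}$. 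In other words, the arbitrary power $\ell$ is bought by the shortness of the time interval raised to the number $n+1\approx\log|\log\eps|$ of iterations --- note $\widetilde T_\ell$ shrinks as $\ell$ grows --- not by the moment-of-inertia exponent, which only supplies the seed. This iteration (and the count $n\sim\log|\log\eps|$, limited precisely by the weak bound \eqref{eq:J<} entering the coefficient $A_\eps(R,h)$ through the $1/(|\log\eps|^\gamma R^2 h)$ term) is the missing idea; the two-stage structure you correctly mention at the end refers to running this same scheme twice (here with step $h\sim R/\log|\log\eps|$, then in Proposition \ref{prop:2} with $h \sim R/|\log\eps|$), not to a Gr\"onwall argument.
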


\begin{proof}
Without loss of generality we assume hereafter $a_i=|a_i|$, i.e., $\omega_{i,\eps}(t) \ge 0$, so that 
\[
m^i_t(R) := \int_{\Sigma(B^{i,\eps}(t)|R)^\complement}\!\rmd x\, \omega_{i,\eps}(x,t)\,.
\]
In what follows, $h$ and $R$ are two positive parameters to be fixed later and such that $R\ge 2h$. Let $x\mapsto W_{R,h}(x)$, $x\in \bb R^2$, be a non-negative smooth function, depending only on $|x|$, such that
\begin{equation}
\label{W1}
W_{R,h}(x) = \begin{cases} 1 & \text{if $|x|\le R$}, \\ 0 & \text{if $|x|\ge R+h$}, \end{cases}
\end{equation}
and, for some $C_W>0$,
\begin{equation}
\label{W2}
|\nabla W_{R,h}(x)| < \frac{C_W}{h}\,,
\end{equation}
\begin{equation}
\label{W3}
|\nabla W_{R,h}(x)-\nabla W_{R,h}(x')| < \frac{C_W}{h^2}\,|x-x'|\,. 
\end{equation}
The quantity
\begin{equation}
\label{mass 1}
\mu_t(R,h) = \int\! \rmd x \, \big[1-W_{R,h}(x-B^{i,\eps}(t))\big]\, \omega_{i,\eps}(x,t)\,,
\end{equation}
is a mollified version of $m^i_t$, satisfying
\begin{equation}
\label{2mass 3}
\mu_t(R,h) \le m^i_t(R) \le \mu_t(R-h,h)\,,
\end{equation}
so that it is enough to prove \eqref{smt1} with $\mu_t$ in place of $m^i_t$. To this end, we study the variation in time of $\mu_t(R,h)$ by applying \eqref{weq} with test function $f(x,t) = 1-W_{R,h}(x-B^{i,\eps}(t))$, getting
\[
\frac{\rmd}{\rmd t} \mu_t(R,h) = - \int\! \rmd x\, \nabla W_{R,h}(x-B^{i,\eps}(t)) \cdot [u(x,t) - \dot B^{i,\eps}(t)]\,\omega_{i,\eps}(x,t)\,.
\]
The time derivative of the center of vorticity can be computed by applying again Eq.~\eqref{weq} (with now test function $f(x,t) =x$), so that
\begin{align}
\label{bpunto}
\dot B^{i,\eps}(t) = \frac{1}{a_i} \int\!\rmd x\, \big[ u^i_L(x,t) + u^i_{\mc R}(x,t) + F^i(x,t)\big] \omega_{i,\eps}(x,t)\,,
\end{align}
having used the decomposition Eq.~\eqref{decom_u} and that $\int\!\rmd x\, \omega_{i,\eps}(x,t) (K*\omega_{i,\eps})(x,t) = 0$ by the antisymmetry of $K$. We thus conclude that
\begin{equation}
\label{mass 4}
\frac{\rmd}{\rmd t} \mu_t(R,h) =  - A_1 - A_2 - A_3\,,
\end{equation}
with
\begin{equation*}
\begin{split}
A_1 & = \int\! \rmd x\, \nabla W_{R,h}(x-B^{i,\eps}(t)) \cdot (K*\omega_{i,\eps})(x,t)\,  \omega_{i,\eps}(x,t)  \\ & = \frac 12 \int\! \rmd x \! \int\! \rmd y\, [\nabla W_{R,h}(x-B^{i,\eps}(t)) - \nabla W_{R,h}(y-B^{i,\eps}(t))] \\ & \qquad  \cdot K(x-y) \, \omega_{i,\eps}(x,t)\,  \omega_{i,\eps}(y,t) \\ A_2 & = \int\! \rmd x\, \nabla W_{R,h}(x-B^{i,\eps}(t)) \cdot \big[u^i_L(x,t) + u^i_{\mc R} (x,t) \big] \omega_{i,\eps}(x,t) \\ & \;\; - \frac{1}{a_i}\int\! \rmd x\, \nabla W_{R,h}(x-B^{i,\eps}(t)) \cdot \int\! \rmd y \,\big[u^i_L(y,t) + u^i_{\mc R} (y,t) \big]  \omega_{i,\eps}(y,t)\, \omega_{i,\eps}(x,t)\,, \\ A_3 & = \frac{1}{a_i} \int\! \rmd x\, \nabla W_{R,h}(x-B^{i,\eps}(t)) \cdot \int\! \rmd y \,\big[F^i(x,t) - F^i (y,t) \big]  \omega_{i,\eps}(y,t)\, \omega_{i,\eps}(x,t)\,,
\end{split}
\end{equation*}
where the second expression of $A_1$ is due to the antisymmetry of $K$.

Concerning $A_1$, we introduce the new variables $x'=x-B^{i,\eps}(t)$, $y'=y-B^{i,\eps}(t)$, define $\tilde\omega_{i,\eps}(z,t) := \omega_{i,\eps}(z+B^{i,\eps}(t),t)$, and let
\[
f(x',y') = \frac 12 \tilde\omega_{i,\eps}(x',t)\, \tilde\omega_{i,\eps}(y',t) \, [\nabla W_{R,h}(x')-\nabla W_{R,h}(y')] \cdot K(x'-y') \,,
\]
so that $A_1 = \int\!\rmd x' \! \int\!\rmd y'\,f(x',y')$. We observe that $f(x',y')$ is a symmetric function of $x'$ and $y'$ and that, by \eqref{W1}, a necessary condition to be different from zero is if either $|x'|\ge R$ or $|y'|\ge R$. Therefore,
\begin{equation*}
\begin{split}
A_1  &= \bigg[ \int_{|x'| > R}\!\rmd x' \! \int\!\rmd y' + \int\!\rmd x' \! \int_{|y'| > R}\!\rmd y' -  \int_{|x'| > R}\!\rmd x' \! \int_{|y'| > R}\!\rmd y'\bigg]f(x',y') \\ & = 2 \int_{|x'| > R}\!\rmd x' \! \int\!\rmd y'\,f(x',y')  -  \int_{|x'| > R}\!\rmd x' \! \int_{|y'| > R}\!\rmd y'\,f(x',y') \\ & = A_1' + A_1'' + A_1'''\,,
\end{split}
\end{equation*}
with 
\begin{equation*}
\begin{split}
A_1' & = 2 \int_{|x'| > R}\!\rmd x' \! \int_{|y'| \le \frac{R}{2}}\!\rmd y'\,f(x',y') \,, \quad A_1'' = 2 \int_{|x'| > R}\!\rmd x' \! \int_{|y'| > \frac{R}{2}}\!\rmd y'\,f(x',y')\,, \\ A_1''' & = -  \int_{|x'| > R}\!\rmd x' \! \int_{|y'| > R}\!\rmd y'\,f(x',y')\,.
\end{split}
\end{equation*}
By the assumptions on $W_{R,h}$, we have $\nabla W_{R,h}(z) = \eta_h(|z|) z/|z|$ with $\eta_h(|z|) =0$ for $|z| \le R$. In particular, $\nabla W_{R,h}(y') = 0$ for $|y'| \le R/2$, hence
\[
A_1' =  \int_{|x'| > R}\!\rmd x' \, \tilde\omega_{i,\eps}(x',t) \eta_h(|x'|) \,\frac{x'}{|x'|} \cdot  \int_{|y'| \le \frac{R}{2}}\!\rmd y'\, K(x'-y') \, \tilde\omega_{i,\eps}(y',t)\,.
\]
In view of  \eqref{W2}, $|\eta_h(|z|)| \le C_W/h$, so that 
\begin{equation}
\label{a1'}
|A_1'| \le \frac{C_W}{h} m^i_t(R) \sup_{|x'| > R} |H_1(x')|\,,
\end{equation}
with
\[
H_1(x') = \frac{x'}{|x'|}\cdot  \int_{|y'| \le \frac{R}{2}}\!\rmd y'\, K(x'-y') \, \tilde\omega_{i,\eps}(y',t) \,.
\]
Now, recalling \eqref{Kern} and using that $x'\cdot (x'-y')^\perp=-x'\cdot y'^\perp$, we get,
\begin{equation}
\label{in H_11}
H_1(x') = \frac{1}{2\pi} \int_{|y'|\leq \frac{R}{2}}\! \rmd y'\, \frac{x'\cdot y'^\perp}{|x'||x'-y'|^2}\,  \tilde\omega_{i,\eps}(y',t) \,.
\end{equation}
By \eqref{c.m.}, $\int\! \rmd y'\,  y'^\perp\,  \tilde\omega_{i,\eps}(y',t) = 0$, so that
\begin{equation}
\label{in H_13}
H_1(x')  = H_1'(x')-H_1''(x')\,, 
\end{equation}
where
\begin{eqnarray*}
&& H_1'(x') = \frac{1}{2\pi}  \int_{|y'|\le \frac{R}{2}}\! \rmd y'\, \frac {x'\cdot y'^\perp}{|x'|}\, \frac {y'\cdot (2x'-y')}{|x'-y'|^2 \ |x'|^2} \,  \tilde\omega_{i,\eps}(y',t) \,, \\ && H_1''(x')= \frac{1}{2\pi} \int_{|y'|> \frac{R}{2}}\! \rmd y'\, \frac{x'\cdot y'^\perp}{|x'|^3}\,  \tilde\omega_{i,\eps}(y',t) \,.
\end{eqnarray*}
We notice that if $|x'| > R$ then $|y'| \le \frac{R}{2}$ implies $|x'-y'|\ge \frac{R}{2}$ and $|2x'-y'|\le |x'-y'|+|x'|$. Therefore, for any $|x'| > R$,
\[
|H_1'(x')| \le \frac 1\pi \bigg[\frac{1}{|x'|^2 R} + \frac{2}{|x'|R^2} \bigg]  \int_{|y'|\leq \frac{R}{2}} \! \rmd y'\, |y'|^2 \,  \tilde\omega_{i,\eps}(y',t) \le \frac{3 J_{i,\eps}(t)}{\pi R^3}\,.
\]
To bound $H_1''(x')$, by Chebyshev's inequality, for any $|x'| > R$ we have,
\[
|H_1''(x')| \le \frac{1}{2\pi |x'|^2} \int_{|y'|> \frac{R}{2}}\! \rmd y'\, |y'| \tilde\omega_{i,\eps}(y',t) \le \frac{J_{i,\eps}(t)}{ \pi R^3} \,.
\]
From Eqs.~\eqref{a1'}, \eqref{in H_13}, and the previous estimates, we conclude that
\begin{equation}
\label{H_14b}
|A_1'| \le \frac{4C_W J_{i,\eps}(t)}{\pi R^3  h} m^i_t(R)\,.
\end{equation}
Now, by \eqref{W3} and then applying the Chebyshev's inequality,
\begin{align}
\label{badterm}
|A_1''| + |A_1'''| & \le \frac{C_W}{\pi h^2} \int_{|x'| \ge R}\!\rmd x' \! \int_{|y'| \ge \frac{R}{2}}\!\rmd y'\,\tilde\omega_{i,\eps}(y',t) \,  \tilde\omega_{i,\eps}(x',t) \nonumber \\ & = \frac{C_W}{\pi h^2}m^i_t(R)   \int_{|y'| \ge \frac{R}{2}}\!\rmd y'\, \tilde\omega_{i,\eps}(y',t)  \le \frac{4C_W J_{i,\eps}(t)}{\pi R^2h^2} m^i_t(R)\,.
\end{align}
In conclusion, 
\begin{equation}
\label{a1s}
|A_1| \le   \frac{4C_W}{\pi} \left( \frac{1}{R^3 h} +  \frac{1}{R^2 h^2}\right) J_{i,\eps}(t) m^i_t(R)\,.
\end{equation}

Concerning $A_2$ and $A_3$, we observe that by \eqref{W1} the integrand is different from zero only if $R\le |x-B^{i,\eps}(t)|\le R+h$ and $x,y\in \Lambda_{i,\eps}(t) \subset \Sigma(\zeta^i(t)|\varrho)$. Therefore, by Eqs.~\eqref{stimR}, \eqref{uL<}, using again the variables $x'=x-B^{i,\eps}(t)$, $y'=y-B^{i,\eps}(t)$, and that $\int\!\rmd y\, \omega_{i,\eps}(y,t)=a_i$, see Eq.~\eqref{ait},
\begin{equation}
\label{a2s}
|A_2| \le \frac{2C_W}{h}\left[\frac{|a_i|}{4\pi\alpha} + \frac{C_{\mc R}\log|\log\eps|+C_L}{|\log\eps|} \right] m^i_t(R)\,,
\end{equation}
while, from the bounds on $F^i$ and its Lipschitz constant in Eq.~\eqref{stimF}, 
\begin{align}
\label{a3s}
|A_3| & \le \frac{2C_W C_F}{a_ih} \int_{|x'|\ge R}\!\rmd x'  \tilde\omega_{i,\eps}(x',t)  \int_{|y'|> R}\!\rmd y'\, \tilde\omega_{i,\eps}(y',t) \nonumber \\ & \quad + \frac{C_WC_F}{a_ih} \int_{R \le |x'|\le R+h}\!\rmd x'  \tilde\omega_{i,\eps}(x',t) \int_{|y'| \le R}\!\rmd y'\,|x'- y'| \,  \tilde\omega_{i,\eps}(y',t) \nonumber \\ & \le \frac{2C_W C_F J_{i,\eps}(t)}{a_iR^2h} m^i_t(R) +  C_WC_F \bigg(1+\frac{2R}h\bigg) m^i_t(R)\,,
\end{align}
where we used that $|x'-y'| \le 2R+h$ in the domain of integration of the last integral and the Chebyshev's inequality in the first one.

From Eqs.~\eqref{a1s}, \eqref{a2s}, \eqref{a3s}, and Theorem \ref{thm:J<} we deduce that
\begin{equation}
\label{2mass 4''}
\frac{\rmd}{\rmd t} \mu_t(R,h) \le A_\eps(R,h) m^i_t(h)\quad \forall\, t\in [0,T_\eps] \,,
\end{equation}
where, for each $\gamma \in (0,1)$,
\begin{align}
\label{mass 4bis}
A_\eps(R,h) &= \frac{2C_W}{h} \bigg[ C_FR + C_F \frac h2 + \frac{|a_i|}{4\pi\alpha} + \frac{C_{\mc R}\log|\log\eps|+C_L}{|\log\eps|} + \frac{C_F}{a_i|\log\eps|^{\gamma}R^2}  \nonumber \\ & \qquad\qquad +  \frac{1}{|\log\eps|^{\gamma}R^3} + \frac{1}{|\log\eps|^{ \gamma}R^2  h} \bigg],
\end{align}
for any $\eps \in (0,\eps_\gamma)$ with $\eps_\gamma$ as in Theorem \ref{thm:J<}. Therefore, by Eqs.~\eqref{2mass 3} and \eqref{2mass 4''},
\begin{equation}
\label{mass 14'}
\mu_t(R,h) \le \mu_0(R,h) + A_\eps(R,h) \int_{0}^t \rmd s\, \mu_s(R-h,h) \quad \forall\, t\in [0,T_\eps] \,.
\end{equation}
We iterate the last inequality $n = \lfloor\log|\log\eps|\rfloor$ times,\footnote{$\lfloor z\rfloor$ denotes the integer part of the positive number $z$.} from $R_0 = R - h$ to $R_n = R -(n+1)h = R/2$. Since $h = R/(2n+2)$ and $R_j\in [R/2, R]$, from the explicit expression Eq.~\eqref{mass 4bis} and using that $|a_i|  < |a|$, it is readily seen that if $\eps$ is sufficiently small then
\[
A_\eps(R_j,h) \le A_* \frac nR \quad \forall\, j=0,\ldots,n\,,
\]
with
\begin{equation}
\label{a*}
A_*= 4 C_W \left(C_FR +\frac{|a|}{4\pi\alpha}\right).
\end{equation}
Therefore, for any $\eps$ small enough and $t\in [0,T_\eps]$,
\[
\begin{split}
\mu_t(R-h,h) & \le \mu_0(R-h,h) + \sum_{j=1}^n \mu_0(R_j,h) \frac{(A_*nt/R)^j}{j!} \\ & \quad + \frac{(A_*n/R)^{n+1}}{n!} \int_0^t\!\rmd s\,  (t-s)^n\mu_s(R_{n+1},h) \,.
\end{split}
\]
Since $\Lambda_{i,\eps}(0) \subset \Sigma(\zeta^i_0|\eps)$, if $\eps$ is  sufficiently small then $\mu_0(R_j,h)=0$ for any $j=0,\ldots,n$. Therefore, recalling also Eq.~\eqref{2mass 3}, for any $\eps$ small enough,
\begin{align}
\label{mass 15'}
m^i_t(R) & \le \mu_t(R-h,h) \le \frac{(A_*n/R)^{n+1}}{n!} \int_0^t\!\rmd s\,  (t-s)^n\mu_s(R_{n+1},h) \nonumber \\ &  \le \frac{9}{R^2|\log\eps|^\gamma}  \frac{(A_*nt/R)^{n+1}}{(n+1)!} \le \frac{9}{R^2|\log\eps|^\gamma}  \left(\frac{\rme A_* t}{R}\right)^{n+1} \quad \forall\, t\in [0,T_\eps]\,,
\end{align}
where we used the Chebyshev's inequality and Theorem \ref{thm:J<} in the third inequality, to estimate
\[
\mu_s(R_{n+1},h) \le m^i_s(R_{n+1}) = m^i_s(R/2) \le \frac{J_{i,\eps}(s)}{(R/2-h)^2} \le \frac{9}{R^2|\log\eps|^\gamma}\,, 
\]
and the Stirling approximation in the last one. Since $n = \lfloor\log|\log\eps|\rfloor$ Eq.~\eqref{mass 15'} implies the bound \eqref{smt1} for any $\eps$ sufficiently small choosing, e.g., $\widetilde T_\ell = (R/A_*)\rme^{-\ell-1} \wedge T$.
\end{proof}

\begin{proposition}
\label{prop:2}
Let $m^i_t(R)$ be as in Proposition \ref{prop:1}. Then, given $R>0$, for each $\ell>0$ there is $\bar T_\ell \in (0, T]$ such that 
\begin{equation}
\label{smt}
m^i_t(R) \le \eps^{\ell} \quad \forall\, t\in[0, \bar T_\ell  \wedge T_\eps]
\end{equation}
for any $\eps\in (0,\eps_0)$ sufficiently small.
\end{proposition}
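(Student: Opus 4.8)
The plan is to re-run the iterative scheme of the proof of Proposition \ref{prop:1} verbatim, keeping the mollified mass $\mu_t(R,h)$ and the identity \eqref{mass 4}, $\frac{\rmd}{\rmd t}\mu_t(R,h) = -A_1-A_2-A_3$, together with all the estimates on $A_2$, $A_3$, and on the ``good'' part of $A_1$ already obtained there. The only structural change is the choice of a much smaller space step, $h = R/(2n+2)$ with $n = \lceil|\log\eps|\rceil$, iterating from $R$ down to $R/2$; taking $n$ of order $|\log\eps|$ rather than $\log|\log\eps|$ is precisely what promotes the polynomial-in-$|\log\eps|$ decay of Proposition \ref{prop:1} into the power $\eps^\ell$, via a factor $\lambda^{n+1}$ with $\lambda<1$.

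The obstruction, as anticipated in the introduction to this section, is that with such a small $h$ the bound \eqref{badterm} on the tail--tail self-interaction $A_1''+A_1'''$ ceases to be affordable: estimated as in Proposition \ref{prop:1} through the moment of inertia, it contributes a term of order $J_{i,\eps}(t)/(R^2h^2)$ to the coefficient $A_\eps(R,h)$, which by Theorem \ref{thm:J<} and $h\sim R/|\log\eps|$ is of order $|\log\eps|^{2-\gamma}/R^2$, no longer dominated by the leading term $A_*n/R\sim|\log\eps|$. The key idea is to bypass the moment of inertia here and feed back the decay already proved in Proposition \ref{prop:1}. Since $\int_{|y'|>R/2}\tilde\omega_{i,\eps}(y',t)\,\rmd y' = m^i_t(R/2)$ exactly, applying Proposition \ref{prop:1} at the fixed radius $R/4$ with a fixed exponent $\ell'>2$ gives $m^i_t(R_j/2)\le m^i_t(R/4)\le|\log\eps|^{-\ell'}$ for $t\le\widetilde T_{\ell'}\wedge T_\eps$, whence
\[
|A_1''| + |A_1'''| \le \frac{C_W}{\pi h^2}\, m^i_t(R)\, m^i_t(R/2) \le \frac{C_W}{\pi h^2\,|\log\eps|^{\ell'}}\, m^i_t(R),
\]
so the associated coefficient is of order $|\log\eps|^{2-\ell'}\to 0$. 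The genuinely $J$-dependent pieces that survive (the $H_1'$ part of $A_1'$ and the first term of $A_3$) carry only a factor $1/h\sim|\log\eps|/R$, hence are of order $|\log\eps|^{1-\gamma}=o(|\log\eps|)$ and remain dominated by $A_*n/R$.

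With these replacements one recovers, exactly as in Proposition \ref{prop:1}, a uniform bound $A_\eps(R_j,h)\le\widetilde A_*\,n/R$ for a suitable constant $\widetilde A_*$ and all $\eps$ small, now valid for $n\sim|\log\eps|$. I would then integrate and iterate the differential inequality $\frac{\rmd}{\rmd t}\mu_t(R,h)\le A_\eps(R,h)\,\mu_t(R-h,h)$ exactly $n$ times. The boundary terms $\mu_0(R_j,h)$ vanish because $\Lambda_{i,\eps}(0)\subset\Sigma(\zeta^i|\eps)$ and $R_j\ge R/2\gg\eps$, and the seed $\mu_s(R/2,h)\le m^i_s(R/2)\le|a|$ can be taken trivial. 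Following the computation leading to \eqref{mass 15'} and using Stirling, this yields
\[
m^i_t(R) \le |a|\left(\frac{\rme\,\widetilde A_*\,t}{R}\right)^{n+1}.
\]
Choosing $\bar T_\ell\le\widetilde T_{\ell'}\wedge T$ small enough that $\rme\,\widetilde A_* t/R\le\rme^{-\ell-1}$ for $t\le\bar T_\ell$, and recalling $n+1\ge|\log\eps|$, one gets $m^i_t(R)\le|a|\,\eps^{\ell+1}\le\eps^{\ell}$ for every $\eps$ small and $t\in[0,\bar T_\ell\wedge T_\eps]$, which is the claim.

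The main obstacle is thus the tail--tail self-interaction $A_1''+A_1'''$: with the fine step needed to reach $\eps^\ell$ it cannot be controlled by Theorem \ref{thm:J<}, and the entire reason for splitting the argument into Propositions \ref{prop:1} and \ref{prop:2} is to first generate, through the coarse step, a decay $m^i_t\le|\log\eps|^{-\ell'}$ strong enough to tame this term in the fine-step iteration.
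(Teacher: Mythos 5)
Your proposal is correct and follows essentially the same route as the paper: a two-stage bootstrap in which Proposition \ref{prop:1} applied at radius $R/4$ replaces the moment-of-inertia bound on the tail--tail term $A_1''+A_1'''$, making the fine iteration with $n\sim|\log\eps|$ steps and $h\sim R/|\log\eps|$ affordable. The only (immaterial) deviations are your choice of exponent $\ell'>2$ where the paper takes $\ell=2$, and your trivial seed bound $\mu_s(R/2,h)\le|a|$ where the paper reuses the Chebyshev bound $9/(R^2|\log\eps|^\gamma)$; both yield $m^i_t(R)\le\eps^\ell$ for $\eps$ small.
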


\begin{proof}
The strategy used in the proof of Proposition \ref{prop:1} would give the stronger estimate Eq.~\eqref{smt} if we could choose $n = \lfloor |\log\eps|\rfloor$. But this means $h  \sim |\log\eps|^{-1}$, which seems not acceptable since it implies that the last term in the right-hand side of Eq.~\eqref{mass 4bis} diverges as $\eps$ vanishes. This dangerous term comes from Eq.~\eqref{badterm}, where the term $ \int_{|y'| \ge \frac{R}{2}}\!\rmd y'\, \tilde\omega_{i,\eps}(y',t) = m^i_t(R/2)$ is bounded from above by the moment of inertia. But now, Proposition \ref{prop:1} applied with $R/4$ in place of $R$ and, e.g., $\ell=2$, gives
\[
m^i_t(R/4) \le \frac{1}{|\log\eps|^2} \quad \forall\,  t\in[0, \widetilde T_2  \wedge T_\eps]
\]
for any $\eps$ small enough. Therefore, besides the bound Eq.~\eqref{badterm} (which holds for any $t\in [0,T_\eps]$), we also have
\[
|A_1''| + |A_1'''| \le \frac{C_W}{\pi |\log\eps|^2 h^2} m^i_t(R) \quad  \forall\,  t\in[0, \widetilde T_2  \wedge T_\eps]\,.
\]
We thus arrive, in place of Eqs.~\eqref{mass 14'}, to the integral inequality
\[
\mu_t(R',h) \le \mu_0(R',h) + A_\eps(R',h) \int_{0}^t \rmd s\, \mu_s(R'-h,h) \quad \forall\, t\in [0,\widetilde T_2  \wedge T_\eps] \,,
\]
with now
\[
\begin{split}
A_\eps(R',h) &= \frac{2C_W}{h} \bigg[ C_FR' + C_F\frac h2 + \frac{|a_i|}{4\pi\alpha} + \frac{C_{\mc R}\log|\log\eps|+C_L}{|\log\eps|} + \frac{C_F}{a_i|\log\eps|^{\gamma}(R')^2}  \nonumber \\ & \qquad\qquad +  \frac{1}{|\log\eps|^{\gamma}(R')^3} + \frac{1}{|\log\eps|^2  h} \bigg]
\end{split}
\]
for any $R'\ge R/2$ and $\eps$ small enough. This inequality can be iterated  $n = \lfloor|\log\eps|\rfloor$ times, from $R'_0 = R-h$ to $R'_n = R -(n+1)h = R/2$, and arguing as done in Proposition \ref{prop:1} we obtain, for any $\eps$ small enough,
\[
m^i_t(R) \le\frac{9}{R^2|\log\eps|^\gamma} \left(\frac{\rme A_* t}{R}\right)^{n+1} \quad \forall\, t\in [0,\widetilde T_2 \wedge T_\eps] \,,
\]
which implies the bound Eq.~\eqref{smt} for any $\eps$ sufficiently small choosing $\bar T_\ell = (R/A_*) \rme^{-\ell-1} \wedge \widetilde T_2$.
\end{proof}

\begin{remark}
\label{rem:4.1}
For later discussion, we give an explicit lower bound of the threshold $\bar T_\ell$ when $\ell>2$. From the proof of Proposition \ref{prop:1}, $\widetilde T_\ell = (R/A_*) \rme^{-\ell-1} \wedge T$, $A_*$ as in Eq.~\eqref{a*}, that with $R/4$ in place of $R$ and $\ell=2$ gives
\[
\widetilde T_2 =  \frac{R\rme^{-3}}{4C_W} \left(C_F R +\frac{|a|}{\pi\alpha}\right)^{-1} \wedge T.
\]
Therefore Eq.~\eqref{smt} holds for any $i=1,\ldots, N$, choosing, e.g.,
\[
\bar T_\ell = \frac{R\rme^{- \ell -1}}{4C_W} \left(C_F R +\frac{|a|}{\pi\alpha}\right)^{-1} \wedge T.
\]
\end{remark}

\section{Localization of vortices support}
\label{sec:5}

To enforce the condition on the support of the vortex rings in Eq.~\eqref{Teps}, we first show that these supports remain confined inside small disks centered in the corresponding centers of vorticity. To  this end, we need to evaluate the force acting on the fluid particles furthest from the center of vorticity.

\begin{lemma}
\label{lem:5.1}
Recall the definition Eq.~\eqref{Teps} of $T_\eps$ and define
\begin{equation}
\label{Rt}
R_t:= \max\{|x-B^{i,\eps}(t)|\colon x\in \Lambda_{i,\eps}(t)\}.
\end{equation}
Let $x(t)$ be the solution to \eqref{eqchar_n} with initial condition $x(0) = x_0 \in \Lambda_{i,\eps}(0)$ and suppose at time $t\in (0,T_\eps)$ it happens that 
\begin{equation}
\label{hstimv}
|x(t)-B^{i,\eps}(t)| = R_t.
\end{equation}
Then, at this time $t$, for each fixed $\gamma\in(0,1)$ and any $\eps$ small enough,
\begin{equation}
\label{stimv}
\frac{\rmd}{\rmd t} |x(t)- B^{i,\eps}(t)| \le 2 C_F R_t + \frac{|a_i|}{\pi\alpha} +  \frac{4}{\pi|\log\eps|^\gamma R_t^3} + \sqrt{\frac{M m^i_t(R_t/2)}{\eps^2}}\,,
\end{equation}
with $M$ as in Eq.~\eqref{Mgamma} and $C_F$ as in Eq~\eqref{stimF}.
\end{lemma}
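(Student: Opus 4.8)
The plan is to differentiate $t\mapsto|x(t)-B^{i,\eps}(t)|$ along the characteristic and to estimate the radial component of the relative velocity term by term, reusing the velocity decomposition Eq.~\eqref{decom_u} and the bounds already established. Since $x(t)$ solves Eq.~\eqref{eqchar_n} and, at the time under consideration, $|x(t)-B^{i,\eps}(t)|=R_t>0$, one has
\[
\frac{\rmd}{\rmd t}|x(t)-B^{i,\eps}(t)| = \frac{x(t)-B^{i,\eps}(t)}{|x(t)-B^{i,\eps}(t)|}\cdot\big[u(x(t),t)-\dot B^{i,\eps}(t)\big].
\]
Inserting Eq.~\eqref{decom_u} for $u$ together with the expression Eq.~\eqref{bpunto} for $\dot B^{i,\eps}(t)$---in which the self-convolution $K*\omega_{i,\eps}$ is absent because of the antisymmetry of $K$---the relative velocity becomes $(K*\omega_{i,\eps})(x(t),t)$ plus the three ``centered'' fields $g(x(t),t)-a_i^{-1}\int\!\rmd y\,g(y,t)\,\omega_{i,\eps}(y,t)$ with $g=u^i_L,u^i_{\mc R},F^i$. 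I would then bound the four contributions separately.

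For the regular fields the estimates are direct. Since $t\in(0,T_\eps)$, both $x(t)$ and the whole support $\Lambda_{i,\eps}(t)$ lie in $\Sigma(\zeta^i(t)|\varrho)$, so the Lipschitz bound Eq.~\eqref{stimF} gives $|F^i(x(t),t)-F^i(y,t)|\le C_F|x(t)-y|$ for every $y\in\Lambda_{i,\eps}(t)$; as $|x(t)-y|\le|x(t)-B^{i,\eps}(t)|+|y-B^{i,\eps}(t)|\le 2R_t$ by the very definition Eq.~\eqref{Rt} of $R_t$, averaging against $\omega_{i,\eps}/a_i$ yields a contribution bounded by $2C_FR_t$. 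The fields $u^i_L$ and $u^i_{\mc R}$ are instead controlled pointwise on $\Sigma(\zeta^i(t)|\varrho)$ by Eqs.~\eqref{uL<} and \eqref{stimR}, so their centered versions are bounded by $2\big(\tfrac{|a_i|}{4\pi\alpha}+\tfrac{C_L}{|\log\eps|}\big)$ and by $2\tfrac{C_{\mc R}\log|\log\eps|}{|\log\eps|}$ respectively; for $\eps$ small the vanishing remainders are absorbed and these two pieces together contribute at most $\tfrac{|a_i|}{\pi\alpha}$.

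The substantial part is the self-interaction $\tfrac{x'}{|x'|}\cdot(K*\omega_{i,\eps})(x(t),t)$, where I set $x'=x(t)-B^{i,\eps}(t)$, $y'=y-B^{i,\eps}(t)$, $|x'|=R_t$, and $\tilde\omega_{i,\eps}(y',t)=\omega_{i,\eps}(y'+B^{i,\eps}(t),t)$, and split the $y'$-integral at $|y'|=R_t/2$. On $|y'|\le R_t/2$ one has $|x'-y'|\ge R_t/2$, and the integrand is exactly the quantity $H_1(x')$ analysed in the proof of Proposition~\ref{prop:1}: using the vanishing of the first moment $\int\!\rmd y'\,y'^\perp\tilde\omega_{i,\eps}(y',t)=0$ (definition of the center of vorticity) together with Chebyshev's inequality, it is bounded by $\tfrac{4J_{i,\eps}(t)}{\pi R_t^3}$, hence by $\tfrac{4}{\pi|\log\eps|^\gamma R_t^3}$ after invoking Theorem~\ref{thm:J<}. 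On $|y'|>R_t/2$ the kernel may be singular and no cancellation is available, but the escaped mass there is $m^i_t(R_t/2)$; bounding $|K(x'-y')|=\tfrac1{2\pi|x'-y'|}$ and rearranging the vorticity symmetrically around $x'$, using the bound $\|\omega_{i,\eps}(\cdot,t)\|_\infty\le\tfrac{r_\eps+\bar d+\varrho}{r_\eps-\eps}\tfrac{M}{\eps^2}$ from Eq.~\eqref{omt<}, produces $\tfrac1{\sqrt\pi}\sqrt{\tfrac{r_\eps+\bar d+\varrho}{r_\eps-\eps}}\,\sqrt{\tfrac{M\,m^i_t(R_t/2)}{\eps^2}}$, which is at most $\sqrt{M\,m^i_t(R_t/2)/\eps^2}$ for $\eps$ small since the prefactor tends to $\pi^{-1/2}<1$.

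Collecting the four bounds reproduces exactly the right-hand side of Eq.~\eqref{stimv}. The \textbf{main obstacle} is the far-field part of the self-interaction: the Biot--Savart kernel is near-singular there (the contributing particles sit within distance $\sim R_t$ of $x'$), so the moment cancellation exploited for $|y'|\le R_t/2$ is unavailable and one must instead trade the smallness of $m^i_t(R_t/2)$ against the $L^\infty$ control on the vorticity. The symmetric-rearrangement estimate is precisely the device that converts $\|\omega_{i,\eps}\|_\infty$ and $m^i_t(R_t/2)$ into the square-root term, and it is also here that the requirement ``$\eps$ small enough'' enters, in order to keep the numerical constant no larger than one.
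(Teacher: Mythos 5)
Your proposal is correct and follows essentially the same route as the paper's proof: the same differentiation along the characteristic with the decomposition Eq.~\eqref{decom_u} and Eq.~\eqref{bpunto}, the same splitting of the self-interaction at radius $R_t/2$ (the paper phrases the far region as the annulus $\Sigma(B^{i,\eps}(t)|R_t)\setminus\Sigma(B^{i,\eps}(t)|R_t/2)$, which coincides with your $|y'|>R_t/2$ since the vorticity vanishes for $|y'|>R_t$), the identification of the near-field term with $H_1(x')$ from Proposition~\ref{prop:1} bounded via Theorem~\ref{thm:J<}, and the same symmetric-rearrangement estimate trading $m^i_t(R_t/2)$ against the $L^\infty$ bound Eq.~\eqref{omt<}, with the vanishing remainders absorbed into $|a_i|/(\pi\alpha)$ for $\eps$ small.
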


\begin{proof}
Letting $x=x(t)$, from \eqref{decom_u} and \eqref{bpunto} we have,
\[
\begin{split}
\frac{\rmd}{\rmd t} |x- B^{i,\eps}(t)| & = \big(u(x,t) - \dot B^{i,\eps}(t)\big) \cdot \frac{x-B^{i,\eps}(t)}{|x-B^{i,\eps}(t)|} \\ & = (K*\omega_{i,\eps})(x,t) \cdot  \frac{x-B^{i,\eps}(t)}{|x-B^{i,\eps}(t)|} + U(x,t)\,,
\end{split}
\]
with
\[
\begin{split}
U(x,t) & = \bigg[ u^i_L(x,t) + u^i_{\mc R}(x,t)  -  \frac{1}{a_i} \int\!\rmd y\, \, \big( u^i_L(y,t) + u^i_{\mc R}(y,t)\big)  \omega_{i,\eps}(y,t) \\ & \quad + \frac{1}{a_i} \int\!\rmd y\, \, \big[F^i(x,t) - F^i(y,t)\big] \omega_{i,\eps}(y,t) \bigg] \cdot \frac{x-B^{i,\eps}(t)}{|x-B^{i,\eps}(t)|}\,.
\end{split}
\]
To evaluate the first term in the right-hand side, we split the domain of integration into the disk $\mc D=\Sigma(B^{i,\eps}(t)|R_t/2)$ and the annulus $\mc A= \Sigma(B^{i,\eps}(t)|R_t)\setminus\Sigma(B^{i,\eps}(t)|R_t/2)$. Then,
\begin{equation}
\label{in A_1,A_2}
(K*\omega_{i,\eps})(x,t) \cdot \frac{x-B^{i,\eps}(t)}{|x-B^{i,\eps}(t)|} = H_\mc D + H_\mc A,
\end{equation}
where
\[
H_\mc D(x) =  \frac{x-B^{i,\eps}(t)}{|x-B^{i,\eps}(t)|} \cdot \int_{\mc D}\! \rmd y\, K(x-y)\, \omega_{i,\eps}(y,t) 
\]
and
\[
H_\mc A(x)=  \frac{x-B^{i,\eps}(t)}{|x-B^{i,\eps}(t)|} \cdot \int_{\mc A}\! \rmd y\, K(x-y)\, \omega_{i,\eps}(y,t).
\]

We observe that $H_\mc D(x)$ is exactly equal to the integral $H_1(x')$ appearing in Eq.~\eqref{a1'}, provided $x'=x-B^{i,\eps}(t)$ and $R=R_t$. Moreover, to obtain Eq.~\eqref{H_14b} we had to bound $H_1(x')$ for $|x'|\ge R$, which is exactly what we need now, as $|x-B^{i,\eps}(t)|=R_t$.  This estimate, adapted to the present context becomes
\begin{equation}
\label{H_14}
|H_\mc D| \le \frac{4 J_{i,\eps}(t)}{\pi R_t^3} \le \frac{4}{\pi|\log\eps|^\gamma R_t^3}\,,
\end{equation}
where the last inequality holds for given $\gamma\in (0,1)$ and any $\eps$ small enough according to Eq.~\eqref{eq:J<}. Regarding $H_\mc A$, by the definition Eq.~\eqref{Kern} we have,
\begin{equation*}
|H_\mc A| \le \frac{1}{2\pi} \int_{\mc A}\! \rmd y\, \frac 1{|x-y|} \, |\omega_{i,\eps}(y,t)|\,.
\end{equation*}
Since the integrand is monotonically unbounded as $y\to x$, the maximum possible value of the integral can be obtained performing a symmetrical rearrangement of the vorticity around the point $x$. In view of Eq.~\eqref{omt<} and since $m^i_t(R_t/2)$ is equal  to the total amount of vorticity in $\mc A$, this rearrangement reads,
\[
|H_\mc A| \le \frac{r_\eps+\bar d+\varrho}{r_\eps-\eps} \frac{M}{2\pi\eps^2} \int_{\Sigma (0|\rho_*)}\!\rmd y'\, \frac{1}{|y'|} =\frac{r_\eps+\bar d+\varrho}{r_\eps-\eps} \frac{M}{\eps^2}  \rho_*, 
\]
where the radius $\rho_*$ is such that 
\[
\frac{r_\eps+\bar d+\varrho}{r_\eps-\eps} \frac{M}{\eps^2}\pi\rho_*^2 = m^i_t(R_t/2)\,.
\]
Therefore,
\begin{equation}
\label{h2}
|H_\mc A| \le \sqrt{\frac{r_\eps+\bar d+\varrho}{r_\eps-\eps} \frac{Mm^i_t(R_t/2)}{\pi\eps^2}} \le  \sqrt{\frac{M m^i_t(R_t/2)}{\eps^2}},
\end{equation}
where the second inequality is valid for $\eps$ small enough. Finally, by Eqs.~\eqref{stimR}, \eqref{uL<} (recall $x=x(t) \in \Lambda_{i,\eps}(t) \subset \Sigma(\zeta^i(t)|\varrho)$) and the bound on the Lipschitz constant of $F^i$ in Eq.~\eqref{stimF},
\[
|U(x,t)|  \le 2 \bigg[C_FR_t + \frac{|a_i|}{4\pi\alpha} + \frac{C_{\mc R}\log|\log\eps|+C_L}{|\log\eps|}\bigg].
\]
From this, Eqs.~\eqref{H_14} and \eqref{h2}, the estimate Eq.~\eqref{stimv} follows provided $\eps$ is chosen sufficiently small.
\end{proof}

\begin{proposition}
\label{prop:5.1}
There exists $T_\varrho' \in (0,T]$ such that, for any $\eps$ small enough,
\begin{equation}
\label{supp_pro}
\Lambda_{i,\eps}(t) \subset \Sigma(B^{i,\eps}(t)|\varrho/2) \quad \forall\, t\in [0, T_\varrho'\wedge T_\eps]\,.
\end{equation}
\end{proposition}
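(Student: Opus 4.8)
The plan is to control the growth of the maximal radius $R_t$ defined in Eq.~\eqref{Rt} and to show that it cannot reach $\varrho/2$ within a suitably short, $\eps$-independent time. Since the vorticity is transported along the flow Eq.~\eqref{eqchar_n}, the support $\Lambda_{i,\eps}(t)$ is the image of $\Lambda_{i,\eps}(0)$ under the characteristic flow, so that $R_t = \max\{|x(t)-B^{i,\eps}(t)| \colon x(0)\in\Lambda_{i,\eps}(0)\}$ is a maximum of absolutely continuous functions. Its upper (Dini) derivative at any $t$ is therefore bounded by $\frac{\rmd}{\rmd t}|x(t)-B^{i,\eps}(t)|$ evaluated along a trajectory realizing the maximum, i.e.\ one satisfying Eq.~\eqref{hstimv}; hence Lemma~\ref{lem:5.1} provides a pointwise differential inequality for $R_t$. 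Since $\Lambda_{i,\eps}(0)\subset\Sigma(\zeta^i|\eps)$ and $B^{i,\eps}(0)\in\overline{\Sigma(\zeta^i|\eps)}$ by convexity (the center of vorticity being a convex combination of points of that disk), we start from $R_0\le 2\eps$, which for $\eps$ small is far below $\varrho/2$. It thus suffices to prove $R_t<\varrho/2$ on $[0,T_\varrho'\wedge T_\eps]$.

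The difficulty is that the right-hand side of Eq.~\eqref{stimv} contains two terms that are not harmless at small radius: the term $\frac{4}{\pi|\log\eps|^\gamma R_t^3}$, which blows up as $R_t\to0$, and the term $\sqrt{M m^i_t(R_t/2)/\eps^2}$, whose argument $R_t/2$ is not fixed, so that Proposition~\ref{prop:2} cannot be invoked directly. The key observation is that we only ever need the differential inequality in the \emph{outer} regime $R_t\ge\varrho/4$. There the first term is bounded by $\frac{4}{\pi|\log\eps|^\gamma(\varrho/4)^3}\to0$ by the moment-of-inertia estimate Eq.~\eqref{eq:J<}, while $m^i_t(R_t/2)\le m^i_t(\varrho/8)$ because $m^i_t$ is nonincreasing in its argument; applying Proposition~\ref{prop:2} with the \emph{fixed} radius $R=\varrho/8$ and, say, $\ell=3$ gives $m^i_t(\varrho/8)\le\eps^3$ for $t\in[0,\bar T_3\wedge T_\eps]$, whence $\sqrt{M m^i_t(R_t/2)/\eps^2}\le\sqrt{M\eps}\to0$. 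Using also $R_t\le\varrho/2$ to bound $2C_FR_t$, we conclude that for fixed $\gamma$ and all $\eps$ small there is a constant $K=K(\alpha,|a|,\varrho,C_F)$ with $\frac{\rmd}{\rmd t}R_t\le K$ whenever $R_t\in[\varrho/4,\varrho/2]$ and $t\le\bar T_3\wedge T_\eps$.

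Fix now $T_\varrho'\le\bar T_3$ with $T_\varrho'<\varrho/(4K)$, and argue by contradiction: suppose $R_t$ first attains $\varrho/2$ at some $\tau_1<T_\varrho'\wedge T_\eps$. Since $R_0<\varrho/4<\varrho/2=R_{\tau_1}$ and $R_t$ is continuous, let $\tau_0<\tau_1$ be the last time before $\tau_1$ at which $R_{\tau_0}=\varrho/4$; then $R_t\ge\varrho/4$ throughout $[\tau_0,\tau_1]$, so the outer-regime bound applies there and yields $\varrho/4=R_{\tau_1}-R_{\tau_0}\le K(\tau_1-\tau_0)\le K\,T_\varrho'<\varrho/4$, a contradiction. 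Hence $R_t<\varrho/2$ for all $t\in[0,T_\varrho'\wedge T_\eps]$, which is precisely Eq.~\eqref{supp_pro}. The main obstacle, and the reason the argument must be cast as a last-crossing/continuity argument rather than a single Gronwall estimate on $[0,t]$, is exactly the singular term $R_t^{-3}$ together with the $R_t$-dependent argument of the mass term: these preclude any useful control of $R_t$ while it is small. The device that circumvents this is to note that reaching $\varrho/2$ forces $R_t$ to traverse the corridor $[\varrho/4,\varrho/2]$, on which both dangerous terms are rendered negligible — one by Eq.~\eqref{eq:J<}, the other by Proposition~\ref{prop:2} applied at the fixed radius $\varrho/8$ — so that the inner regime never needs to be estimated.
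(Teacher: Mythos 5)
Your proof is correct and follows essentially the same route as the paper: a last-crossing (corridor) argument in which Proposition~\ref{prop:2} applied at a fixed radius with $\ell=3$ and the moment-of-inertia bound Eq.~\eqref{eq:J<} neutralize the two dangerous terms in Lemma~\ref{lem:5.1}, yielding an $\eps$-independent lower bound on the time $R_t$ needs to traverse the annulus up to $\varrho/2$. The only (immaterial) differences are that the paper compares $R_t$ with the solution of the linear ODE Eq.~\eqref{stimrbis} via a barrier/first-crossing argument, using the corridor $[\varrho/5,\varrho/2]$ with mass controlled at radius $\varrho/10$ and extracting the traversal time from the explicit Gronwall formula Eq.~\eqref{rt12}, whereas you bound the upper Dini derivative of $R_t$ by a constant $K$ on $[\varrho/4,\varrho/2]$ with mass controlled at $\varrho/8$.
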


\begin{proof}
Let $\bar T_3$ be as in Proposition \ref{prop:2} with the choice $R=\varrho/10$ and $\ell=3$. Recalling the definition Eq.~\eqref{Rt}, we set
\[
t_1 := \sup\{t\in [0,\bar T_3 \wedge T_\eps] \colon R_s \le \varrho/2 \;\, \forall s \in [0,t]\}\,,
\]
If $t_1 = \bar T_3 \wedge T_\eps$, Eq.~\eqref{supp_pro} is proved with $T_\varrho' = \bar T_3$. Otherwise, if $t_1<\bar T_3 \wedge T_\eps$ we define
\[
t_0 =  \inf\{t\in [0,t_1]\colon R_s > \varrho/5 \;\;\forall\, s\in [t,t_1] \}\,.
\]
We observe that $t_0>0$ for any $\eps$ small enough since $R_0\le \eps$. Moreover, $R_{t_1} = \varrho/2$, $R_{t_0} = \varrho/5$, and $R_t \in [\varrho/5,\varrho/2]$ for any $t\in [t_0,t_1]$. In particular, $m_t(R_t/2)\le m_t(\varrho/10) \le \eps^3$ for any $t\in [t_0,t_1]$ and $\eps$ small enough. Clearly, to prove Eq.~\eqref{supp_pro} it is enough to show that there exists  $T_\varrho'\in (0,\bar T_3]$ such that $t_1-t_0 \ge T_\varrho'\wedge T_\eps$ for any $\eps$ small enough.

To this end, we notice that by Lemma \ref{lem:5.1}, if $\eps$ is small enough then $\Lambda_{i,\eps}(t) \subset \Sigma(B^{i,\eps}(t)|R(t))$ for any $t\in (t_0,t_1)$, with $R(t)$ solution to 
\begin{equation}
\label{stimrbis}
\dot R(t) =  2 C_FR(t) + \frac{|a|}{\pi\alpha} + \frac{4}{\pi|\log\eps|^\gamma R(t)^3} + g_\eps(t) \,, \quad R(t_0) = \varrho/4\,,
\end{equation}
where $g_\eps(t)$ is any smooth function which is an upper bound for the last term in Eq.~\eqref{stimv}. Indeed, this is true for $t=t_0$ and suppose, by absurd, there were a first time $t_*\in (t_0,t_1)$ such that $|x(t_*)-B^{i,\eps}(t_*)| = R(t_*)$ for some fluid particle initially located at $x(0) = x_0\in \Lambda_{i,\eps}(0)$. Then $R(t_*) = R_{t_*}$ in view of Eq.~\eqref{Rt}, and hence, by Eq~\eqref{stimv} and using that $|a_i|  < |a|$, the radial velocity of $x(t)- B^{i,\eps}(t)$ at $t=t_*$ would be strictly smaller than $\dot R(t_*)$, in contradiction with the definition of $t_*$ as the first time at which the graph of $t \mapsto |x(t)-B^{i,\eps}(t)|$ crosses the one of $t\mapsto R(t)$.

Since $m_t(R_t/2) \le \eps^3$ we can choose $g_\eps(t) \le 2\sqrt {M\eps}$ for any $t\in [t_0,t_1]$ and $\eps$ small enough. Therefore, by \eqref{stimrbis},
\begin{equation*}
\dot R(t) \le 2 C_F R(t) + \frac{|a|}{\pi\alpha} +  \frac{4}{\pi|\log\eps|^\gamma (\varrho/4)^3} + 2\sqrt{M\eps} \quad \forall\, t\in [t_0,t_1]\,,
\end{equation*}
where we also used that $R(t) \ge R(t_0)=\varrho/4$ to estimate from above the nonlinear term in the right-hand side of Eq.~\eqref{stimrbis}. This means that, e.g., $\dot R(t) \le 2C_F R(t) + |a|/\alpha$ for any $ t\in [t_0,t_1]$ and $\eps$ small enough, whence
\[
R(t_1) \le \rme^{2C_F(t_1-t_0)} R(t_0) + \frac{|a|}{2C_F \alpha}\big(\rme^{2C_F(t_1-t_0)}  -1 \big)\,,
\]
i.e.,
\begin{equation}
\label{rt12}
t_1-t_0 \ge  \frac{1}{2C_F} \log\left(\frac{2C_F\alpha R(t_1) + |a|}{2C_F\alpha R(t_0) + |a|} \right),
\end{equation}
Therefore, as $R(t_1) \ge  \varrho/2$ and $R(t_0) = \varrho/4$, the claim follows with
\[
T_\varrho' =  \frac{1}{2C_F} \log\left( \frac{4C_F\alpha\varrho + 4|a|}{2C_F\alpha\varrho + 4 |a|} \right) \wedge \bar T_3\,,
\]
where, according to what discussed in Remark \ref{rem:4.1}, we can choose
\[
\bar T_3 =  \frac{\varrho\,\rme^{-4}}{4C_W} \left(C_F \varrho +\frac{10|a|}{\pi\alpha}\right)^{-1} \wedge T\,.
\]
The proposition is proved.
\end{proof}

\section{Conclusion of the proof of Theorem \ref{thm:1}}
\label{sec:6}

In this section we prove the following limits,
\begin{gather}
\label{secl} 
\lim_{\eps \to 0} \max_{i=1,\ldots,N} \max_{t\in [0, T_\eps]}\big| B^{i,\eps}(t) - \zeta^i(t) \big| = 0\,, \\ \label{pril} \lim_{\eps\to 0}  \max_{i=1,\ldots,N} \max_{t\in [0,T_\eps]} |B^{i,\eps}(t)-q^{i,\eps}(t)| = 0\,,
\end{gather}
with $q^{i,\eps}(t)$ as in Theorem \ref{thm:conc}. This concludes the proof of the main theorem. Indeed, from Eq.~\eqref{secl} and Proposition \ref{prop:5.1} it follows by continuity that $T_\eps\ge T_\varrho'$ for any $\eps$ small enough. Therefore, in view of Eq.~\eqref{pril} and applying Theorem \ref{thm:conc} with, e.g., $R=R_\eps=\exp\sqrt{|\log\eps|}$, the statement of Theorem \ref{thm:1} is proved with $T_\varrho=T_\varrho'$, $\zeta^{i,\eps}(t) = q^{i,\eps}(t)$, and $\varrho_\eps = \eps R_\eps$.

\begin{proof}[Proof of Eq.~(\ref{secl})]
In what follows, we shall denote by $C$ a generic positive constant, whose numerical value may change from line to line. Let
\[
\Delta(t) := \sum_i |B^{i,\eps}(t) - \zeta^i(t)|^2\,, \quad t\in [0,T_\eps]\,.
\]
From Eqs.~\eqref{ode}, \eqref{bpunto}, and noticing that
\[
F^i(x,t) = \sum_{j\ne i} \left[(K*\omega_{j,\eps})(x,t) + u^j_L(x,t) + u^j_{\mc R}(x,t)\right],
\]
we have
\[
\dot \Delta(t) = 2\sum_i (B^{i,\eps}(t) - \zeta^i(t)) \cdot (\dot B^{i,\eps}(t) - \dot \zeta^i(t))  = 2\sum_{p=1}^4 \sum_i (B^{i,\eps}(t) - \zeta^i(t)) \cdot D^i_p(t)\,,
\]
where
\begin{gather*}
D^i_1(t) = \frac{1}{a_i} \sum_{j\ne i} \int\!\rmd x \! \int\!\rmd y\, \left[ K(x-y) -  K(\zeta^i(t)-\zeta^j(t))  \right] \omega_{i,\eps}(x,t)\, \omega_{j,\eps}(y,t) \,, \\ D^i_2(t) = \frac{1}{a_i} \int\!\rmd x\, u^i_L(x,t)\, \omega_{i,\eps}(x,t) - \frac{a_i}{4\pi\alpha} \begin{pmatrix} 1 \\ 0 \end{pmatrix},  \\ D^i_3(t) = \frac{1}{a_i} \sum_j \int\!\rmd x\, u^j_{\mc R}(x,t) \,\omega_{i,\eps}(x,t)  \,, \quad  D^i_4(t) = \frac{1}{a_i} \sum_{j\ne i} \int\!\rmd x \, u^j_L(x,t)  \, \omega_{i,\eps}(x,t)\,.
\end{gather*}
By Eqs.~\eqref{Kern} and \eqref{sep-disks}
\begin{align*}
|D^i_1(t)| & \le \frac{C}{\varrho^2|a_i|} \sum_{j\ne i} \int\!\rmd x \! \int\!\rmd y\, \left(|x-\zeta^i(t)| + |y-\zeta^j(t)| \right) |\omega_{i,\eps}(x,t)\, \omega_{j,\eps}(y,t)| \\ & \le \frac{C}{\varrho^2} \sum_{j\ne i} |a_j| \left(|B^{i,\eps}(t) - \zeta^i(t)| + |B^{j,\eps}(t) -\zeta^j(t)| + \sqrt{\frac{J_{i,\eps}(t)}{|a_i|}}+   \sqrt{\frac{J_{j,\eps}(t)}{|a_j|}} \right),
\end{align*}
where in the last inequality we used the Cauchy-Schwarz inequality and Eq.~\eqref{J}. Therefore,
\begin{equation}
\label{dp1}
\sum_i (B^{i,\eps}(t) - \zeta^i(t)) \cdot D^i_1(t) \le \frac{C\sqrt N|a|}{\varrho^2} \left(\Delta(t) + \sqrt{\sum_i \frac{J_{i,\eps}(t)}{|a_i|}}\sqrt{\Delta(t)} \right).
\end{equation}

Regarding $D^i_2(t)$, in view of Eqs.~\eqref{uL=w}, \eqref{wL=} and using  that each $\omega_{i,\eps}(x,t)$ is a non-negative or non-positive function,
\[
|D^i_2(t)| =  \left| \frac{1}{a_i} \int\!\rmd x\, |w^i_L(x,t)| \, \omega_{i,\eps}(x,t) - \frac{|a_i|}{4\pi\alpha}  \right|.
\]
By Eq.~\eqref{uL<},
\begin{equation}
\label{dp2_1}
\frac{1}{a_i} \int\!\rmd x\, |w^i_L(x,t)| \, \omega_{i,\eps}(x,t) \le \frac{|a_i|}{4\pi\alpha} + \frac{C_L}{|\log\eps|}\,.
\end{equation}
For a lower bound to the integral in the left-hand side above, we consider the disk
\begin{equation}
\label{sre}
\Sigma_{i,\eps}(t) := \Sigma(q^{i,\eps}(t)|\eps R_\eps)\,, \quad R_\eps := \exp(\sqrt{|\log\eps|\log|\log\eps|})\,,
\end{equation}
with center $q^{i,\eps}(t)$ as in Theorem \ref{thm:conc}. Using  Eq.~\eqref{sep-supp} and that $s \mapsto\log[(1+s)/s]$, $s>0$, is decreasing, if $x\in \Sigma_{i,\eps}(t)$ and $t\in [0,T_\eps]$ then, by definition Eq.~\eqref{wL=},
\[
|w^i_L(x,t)|  \ge \frac{\log[(1+2\eps R_\eps)/(2\eps R_\eps)]}{4\pi(r_\eps+ \bar d+\varrho)}\int_{ \Sigma_{i,\eps}(t)}\!\rmd y\, |\omega_\eps(y,t)|\,,
\]
whence
\[
\frac{1}{a_i} \int\!\rmd x\, |w^i_L(x,t)| \, \omega_{i,\eps}(x,t)  \ge \frac{\log[(1+2\eps R_\eps)/(2\eps R_\eps)]}{4\pi(r_\eps+ \bar d+\varrho)} \frac{1}{|a_i|} \left[\int_{ \Sigma_{i,\eps}(t)}\!\rmd y\,  |\omega_{i,\eps}(y,t)|\right]^2.
\]
By Theorem \ref{thm:conc}, provided $\eps$ is chosen sufficiently small in order to have $R_\eps >\exp(C_3\log|\log\eps|)$, we can apply Eq.~\eqref{eq:conc} with $R=R_\eps$ getting
\begin{equation}
\label{stmass1}
\int_{ \Sigma_{i,\eps}(t)}\!\rmd y\, |\omega_\eps(y,t)| \ge |a_i| - C_4 \sqrt{\frac{\log|\log\eps|}{|\log\eps|}}  \quad \forall\, t\in [0,T_\eps]\,.
\end{equation}
Since
\[
\left|\frac1{|\log\eps|} \log\frac {1+2\eps R_\eps}{2\eps R_\eps}  -1\right| \le 	\frac{C\log R_\eps}{|\log\eps|} = C \sqrt{\frac{\log|\log\eps|}{|\log\eps|}}  \,,
\]
and recalling $r_\eps = \alpha |\log\eps|$, we conclude that there is $C_5 = C_5(\alpha,|a|,\bar d, \varrho)>0$ such that, for any $\eps$ sufficiently small,
\begin{equation}
\label{uL>}
\frac{1}{a_i} \int\!\rmd x\, |w^i_L(x,t)| \, \omega_{i,\eps}(x,t) \ge \frac{|a_i|}{4\pi\alpha} - C_5\sqrt{\frac{\log|\log\eps|}{|\log\eps|}} \quad  \forall\, t\in [0,T_\eps]\,.
\end{equation}
By Eqs.~\eqref{dp2_1} and \eqref{uL>}, for any $\eps$ small enough,
\begin{equation}
\label{dp2}
\sum_i (B^{i,\eps}(t) - \zeta^i(t)) \cdot D^i_2(t) \le C_5\sqrt N \sqrt{\frac{\log|\log\eps|}{|\log\eps|}} \sqrt{\Delta(t)}\,.
\end{equation}

Concerning $D^i_3(t)$, by \eqref{stimR} we deduce that
\begin{equation}
\label{dp3}
\sum_i (B^{i,\eps}(t) - \zeta^i(t)) \cdot D^i_3(t) \le \frac{C_{\mc R} N^{3/2} \log|\log\eps|}{|\log\eps|} \sqrt{\Delta(t)}\,.
\end{equation}

Finally, by Eqs.~\eqref{uL=w}, \eqref{wL=}, and using again Eq.~\eqref{sep-supp} and that $s \mapsto\log[(1+s)/s]$ is decreasing, if $j \ne i$ then
\[
|u^j_L(x,t)| = |w^j_L(x,t)| \le \frac{|a_j|}{4\pi(r_\eps-\bar d-\varrho)} \log \frac {1+2\varrho}{2\varrho} \quad \forall\, x\in \Lambda_{i,\eps}(t) \quad  \forall\, t\in [0,T_\eps]\,,
\]
whence, by Eq.~\eqref{reps},  for any $\eps$ small enough,
\begin{equation}
\label{dp4}
\sum_i (B^{i,\eps}(t) - \zeta^i(t)) \cdot D^i_4(t) \le \frac{CN^{3/2}|a|}{ (\alpha|\log\eps| -\bar d-\varrho)} \log \frac {1+2\varrho}{2\varrho} \sqrt{\Delta(t)}\,.
\end{equation}

Given $\theta\in (0,1)$, by the bounds Eqs.~\eqref{dp1}, \eqref{dp2}, \eqref{dp3}, \eqref{dp4}, and applying Theorem \ref{thm:J<} (with $\gamma\in(\theta,1)$), we conclude that, for any $\eps$ small enough,
\[
\dot\Delta(t) \le  \frac{C\sqrt N|a|}{\varrho^2} \Delta(t) + \frac{1}{|\log\eps|^{\theta/2}}\sqrt{\Delta(t)} \quad \forall\, t\in [0,T_\eps]\,.
\]
Since $T_\eps\le T$ and $\Delta(0) \le 4N\eps^2$, the differential inequality above implies Eq.~\eqref{secl}.
\end{proof}

\begin{proof}[Proof of Eq.~(\ref{pril})] 
By Theorem \ref{thm:conc} with $R=R_\eps=\exp\sqrt{|\log\eps|}$,
\[
\begin{split}
|B^{i,\eps}(t) -q^{i,\eps}(t)| & \le \frac{1}{a_i}\int\!\rmd x\, |x-q^{i,\eps}(t)|\, \omega_{i,\eps}(x,t) \\ & \le \eps R_\eps + \frac{1}{a_i}\int_{\Sigma(q^{i,\eps}(t)|\eps R_\eps)^{\complement}}\!\rmd x\, |x-q^{i,\eps}(t)|\, \omega_{i,\eps}(x,t) \\ & \le  \eps R_\eps + \frac{C_4\log|\log\eps|}{|a_i|\sqrt{|\log\eps|}} |B^{i,\eps}(t)-q^{i,\eps}(t)| \\ & \qquad + \frac{1}{a_i}\int_{\Sigma(q^{i,\eps}(t)|\eps R_\eps)^{\complement}}\!\rmd x\, |x-B^{i,\eps}(t)|\, \omega_{i,\eps}(x,t)\,.
\end{split}
\]
Assuming $\eps$ so small to have $|a_i|\sqrt{|\log\eps|} \ge 2 C_4\log|\log\eps|$, we get (by applying in the end the Cauchy-Schwarz inequality)
\[
\begin{split}
|B^{i,\eps}(t) -q^{i,\eps}(t)| & \le 2  \eps R_\eps +  \frac{2}{a_i}\int_{\Sigma(q^{i,\eps}(t)|\eps R_\eps)^{\complement}}\!\rmd x\, |x-B^{i,\eps}(t)|\, \omega_{i,\eps}(x,t) \\ & \le 2 \eps R_\eps + 2 \sqrt{|a_i|J_{i,\eps}(t)}\,,
\end{split}
\]
and Eq.~\eqref{pril} follows by Theorem \ref{thm:J<}.
\end{proof}

\section{An example of leapfrogging vortex rings}
\label{sec:7}

When we have two vortex rings only, the dynamics of their centers of vorticity (in the limit $\eps\to 0$) can be completely studied, giving rise, for suitable values of the initial data, to the so called \textit{leapfrogging} dynamics, which was first described by Helmholtz \cite{H,H1}, as already discussed in the Introduction.

Although Theorem \ref{thm:1} guarantees convergence for short times only, in the special case of two vortex rings with large enough main radii we are able to extend the time of convergence in order to cover several crossings between the rings. As already noticed in the Introduction, this is completely consistent with the physical phenomenon, since the leapfrogging motion of two vortex rings is observed experimentally and numerically up to a few crossings, after which the rings dissolve and lose their shape.

Let us then describe the dynamical system Eq.~\eqref{ode} for $N=2$ and suppose that their vortex intensities satisfy $a_1+a_2\ne 0$. Adopting the new variables,
\[
x = \zeta^1-\zeta^2\,, \qquad \qquad y = \frac{a_1\zeta^1+ a_2 \zeta^2}{a_1+a_2}\,,
\]
the equations take the form
\begin{equation}
\label{xyeq}
\left\{\begin{aligned} & \dot x=-\frac{a_1+a_2}{2\pi}\nabla^\perp\log |x| +\frac{a_1-a_2}{4\pi \alpha}\begin{pmatrix} 1 \\ 0 \end{pmatrix}, \\
&\dot y =\frac{a_1^2+a_2^2}{4\pi\alpha(a_1+a_2)} \begin{pmatrix} 1 \\ 0 \end{pmatrix}. \end{aligned}\right.
\end{equation}
The barycenter $y$ performs a rectilinear uniform motion, while the evolution of the relative position $x$ is governed by the canonical equations $\dot x = \nabla^\perp \mc H(x)$ of Hamiltonian
\[
\mc H(x) = -\frac{a_1+a_2}{4\pi} \log |x|^2 +\frac{a_1-a_2}{4\pi\alpha}x_2\,,
\qquad x=(x_1, x_2)\,.
\]
Hereafter, for the sake of concreteness, we furthermore assume $a_1>|a_2|$, the other cases can be treated analogously.

\begin{figure}
\includegraphics[scale=.30,angle=0,draft=false]{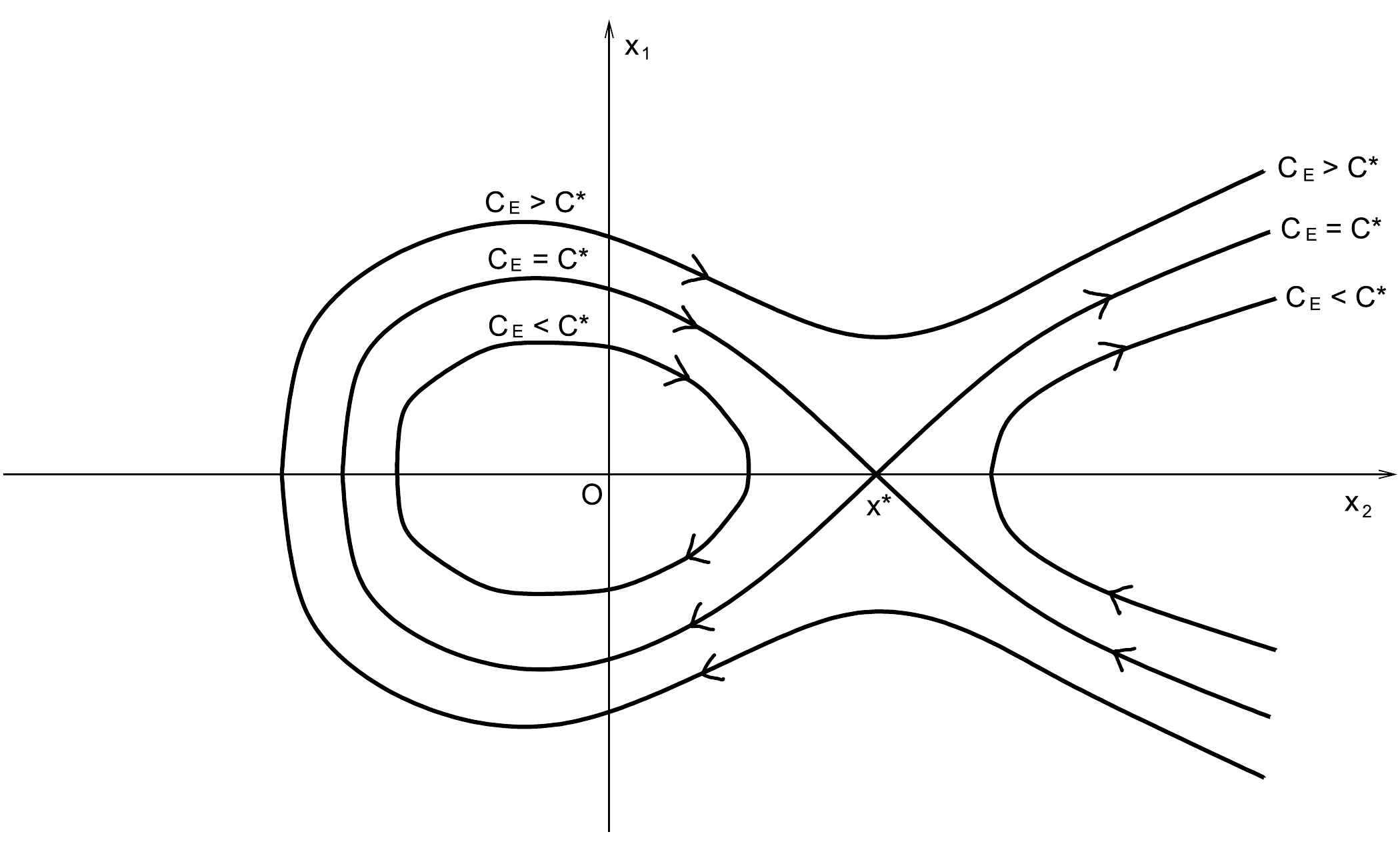}
\caption{Phase portrait of the dynamical system describing the motion of the relative position $x$ between the two rings.}
\label{fig:1}
\end{figure}

The phase portrait of this Hamiltonian system can be obtained by drawing the energy level sets $\{x\colon \mc H(x) = E\}$ (invariant sets, each one composed by a finite union of phase curves). To this end, we recast the equation $\mc H(x) = E$ in the form
\[
x_1= \pm f(x_2)\,, \quad\textnormal{with} \quad f(x_2) = \sqrt{C_E\exp\left( \frac{ x_2}{\alpha a} \right)-x_2^2}\,,
\]
where
\[
a=\frac{a_1+a_2}{a_1-a_2}\,, \qquad C_E=\exp\left(-\frac{4\pi E}{a_1+a_2} \right).
\]
There is a unique equilibrium, corresponding to the critical point $x^*=(0, 2\alpha a)$ of $\mc H$ and we set
\[
C^*=\exp\left(-\frac{4\pi \mc H(x^*)}{a_1+a_2}\right) =\left(\frac{2\alpha a}{ \rme} \right)^2.
\]
It is easily seen that
\[
\mathrm{Dom}(f) = \{ x_2 \colon |x_2| \le \sqrt{C_E} \exp(x_2/2\alpha a)\} = \begin{cases} [\eta_1,\eta_2] \cup [\eta_3, +\infty) & \text{if } C_E < C^* \\  [\bar \eta, +\infty) & \text{if } C_E \ge C^* \end{cases}
\]
with $\eta_1 < 0 < \eta_2 < x^*_2 < \eta_3$ and $\bar\eta < 0$. It follows that the phase portrait looks like qualitatively as depicted in Figure \ref{fig:1}. We notice that one ring overtakes the other when $x_1 = 0$ and $\dot x_1 \ne 0$. In particular, the periodic motions occurring for $0<C_E<C^*$ (whose orbits are the closed curves in Figure \ref{fig:1}), correspond to the leapfrogging behavior, in which the rings pass through each other alternately.

Since along the orbit we have
\[
\dot x_2 =  \frac{a_1+a_2}{2\pi}\frac{x_1}{|x|^2} = \pm\frac{(a_1+a_2)\, \rme^{-x_2/\alpha a} }{2\pi C_E} \sqrt{C_E \, \rme^{x_2/\alpha a} - x_2^2}\,,
\]
the period of a close orbit on the level $C_E<C^*$ is given by
\[
T_E = 2\int_{\eta_1}^{\eta_2}\! \frac{\rmd x_2}{|\dot x_2|} = \frac{4\pi C_E}{a_1+a_2} \int_{\eta_1}^{\eta_2} \! \rmd x_2\, \frac{\rme^{x_2/\alpha a}}{\sqrt{C_E\,\rme^{x_2/\alpha a} - x_2^2}}\,,
\]
with $\eta_1<0<\eta_2$ as before, i.e., the two smallest roots of the equation
$C_E \rme^{x_2/\alpha a}-x_2^2=0$. We also note that, for small values of the positive constant $C_E$,  we have $\eta_{1,2}\approx \mp \sqrt{C_E}$, and
\[
T_ E \approx \frac{4\pi C_E}{a_1+a_2}\int_{-\sqrt{C_E}}^{\sqrt{C_E}}\! \frac{ \rmd x_2}{\sqrt{C_E - x_2^2}} = \frac{4\pi^2 C_E}{a_1+a_2}\,,
\]
which goes to $0$ as $C_E\to 0$ (i.e., $E\to +\infty$, note that $\mc H(x)$ diverges as $x\to 0$).

The time threshold $T_\varrho$ in Theorem \ref{thm:1} can be seen to be bounded by a constant multiple of $C_F^{-1}$ (recall that $T_\varrho = T_\varrho'$ with $T_\varrho'$ as in Proposition \ref{prop:5.1}). On the other hand, $C_F$ is an upper bound for the velocity field (and its Lipschitz constant) produced by one ring and acting on the second one, so it depends on the distance between the centers of vorticity of the two rings as a constant multiple of $(|a_1|+|a_2|)/\varrho^2$ (at short distances), and $\varrho$ is of order $|\eta_{1,2}|$ in the periodic motion considered above. Therefore, $T_\varrho$ and $T_E$ are of the same order also when $T_E$ is small, and a direct inspection easily shows that $T_\varrho <T_E$. Thus, a mere application of Theorem \ref{thm:1} guarantees at most one overtaking between the rings during the time interval $[0,T_\varrho]$ (to this end, it is enough to choose the initial data on the orbit close enough to the point $(0,\eta_1)$ or $(0,\eta_2)$). 

We next show that the result can be improved in the case of rings with large main radii, i.e., when the parameter $\alpha$ is chosen large enough (with respect to the distance between the centers of vorticity). 

The key observation is that when $\alpha\to +\infty$ the system Eq.~\eqref{xyeq} reduces to the standard planar vortex model, i.e., $\dot y=0$ and $\mc H(x) = -\frac{a_1+a_2}{4\pi} \log |x|^2$, so that each level set $\{x\colon \mc H(x) = E\}$ consists of a circular orbit traveled at constant speed, with period
\begin{equation}
\label{period2}
\mc T_E = \frac{4\pi^2 R_E^2}{a_1+a_2}\,,
\end{equation}
where $R_E= \sqrt{C_E} = \exp\big[-2\pi E/(a_1+a_2)\big]$ is the radius of the orbit (note that $C^*\to +\infty$ as $\alpha\to + \infty$).
We omit the proof of the Lemma \ref{lem:alphagrande} below, which easily follows from the previous observation and standard arguments in the theory of ordinary differential equations.

\begin{lemma}
\label{lem:alphagrande}
Given $\varrho>0$, fix $E>0$ such that $R_E>4\varrho$, an integer $k\in \bb N$, and let $T = (k+1) \mc T_E$ with $\mc T_E$ as in Eq.~\eqref{period2}. Then there exits $\alpha_0>0$ such that for any $\alpha \ge \alpha_0$ we have $C_E < C^*$  and the corresponding periodic motion $t\mapsto x_E(t)$ solution to Eq.~\eqref{xyeq}$_a$ satisfies
\begin{equation}
\label{T1}
\min_{t\in [0,T]} |x_E(t)| \ge 4\varrho\,, \qquad kT_E < T\,.
\end{equation}
\end{lemma}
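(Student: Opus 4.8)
The statement is essentially a continuity/perturbation result: for large $\alpha$ the planar (circular) dynamics dominates, and the true Hamiltonian flow of Eq.~\eqref{xyeq}$_a$ inherits, for any finite time horizon, the qualitative features of the limiting circular orbits. The strategy is to fix the target energy $E$ (hence the circular radius $R_E=\sqrt{C_E}$) and the number of crossings $k$, set $T=(k+1)\mc T_E$, and then show that all the required inequalities hold for $\alpha$ large. Concretely, I would proceed by comparing, on the compact time interval $[0,T]$, the solution $x_E(t)$ of the full system with the solution of the $\alpha\to+\infty$ system, which is an exact circle of radius $R_E$ traveled with period $\mc T_E$.

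\textbf{First steps.} I would first record that the drift term in Eq.~\eqref{xyeq}$_a$, namely $\frac{a_1-a_2}{4\pi\alpha}(1,0)^{\mathsf T}$, is $O(1/\alpha)$ and vanishes uniformly as $\alpha\to+\infty$, so the vector field of Eq.~\eqref{xyeq}$_a$ converges, uniformly on any annulus bounded away from the origin, to the purely rotational field $-\frac{a_1+a_2}{2\pi}\nabla^\perp\log|x|$. Since the level set $\{\mc H(x)=E\}$ for the limiting Hamiltonian is exactly the circle $|x|=R_E$ with $R_E>4\varrho$, the orbit stays in the compact annulus $\{2\varrho\le|x|\le 2R_E\}$, away from the singularity at the origin, where the field is smooth with bounded derivatives independent of $\alpha$. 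By continuous dependence of solutions of ODEs on a parameter (Gr\"onwall), the distance $\sup_{t\in[0,T]}|x_E(t)-x_\infty(t)|$ between the full orbit and the limiting circular orbit is $O(1/\alpha)$, where $x_\infty$ denotes the exact circle. This immediately gives $\min_{t\in[0,T]}|x_E(t)|\ge R_E-O(1/\alpha)\ge 4\varrho$ for $\alpha$ large, the first inequality in Eq.~\eqref{T1}.

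\textbf{Periodicity and period comparison.} For the condition $C_E<C^*$, I note that $C^*=(2\alpha a/\rme)^2\to+\infty$ as $\alpha\to+\infty$ while $C_E=R_E^2$ is fixed, so $C_E<C^*$ holds automatically for all large $\alpha$; this places the chosen level set in the regime of closed (periodic) orbits, confirming that $x_E(t)$ is genuinely periodic with some period $T_E(\alpha)$. The remaining inequality $kT_E<T=(k+1)\mc T_E$ follows from the convergence of the period: since $T_E(\alpha)\to\mc T_E$ as $\alpha\to+\infty$ (the period depends continuously on the vector field, which converges, on orbits staying in the fixed compact annulus), one has $T_E(\alpha)<\frac{k+1}{k}\mc T_E$ for $\alpha$ large, which is exactly $kT_E<(k+1)\mc T_E=T$. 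Taking $\alpha_0$ large enough to secure all three conditions simultaneously completes the argument.

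\textbf{Main obstacle.} The only delicate point is justifying convergence of the \emph{period} $T_E(\alpha)\to\mc T_E$, rather than mere closeness of orbits on a fixed interval: a small perturbation of a vector field can, in principle, distort the period. Here this is controlled because the limiting orbit is a nondegenerate periodic orbit (a regular level set of $\mc H$ on which $\nabla\mc H\ne0$, away from the equilibrium $x^*$), so the period is a smooth function of the orbit and depends continuously on the vector field through the transit-time integral $T_E=\oint \rmd\ell/|\dot x|$; since $|\dot x|$ is bounded away from zero on the annulus uniformly in $\alpha$, this integral converges. This is precisely the ``standard arguments in the theory of ordinary differential equations'' alluded to, and it is why the authors omit the detailed proof.
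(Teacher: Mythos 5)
Your proof is correct and is essentially the argument the paper intends: the authors explicitly omit the proof of this lemma, remarking that it ``easily follows'' from the observation that as $\alpha\to+\infty$ the system Eq.~\eqref{xyeq} reduces to the planar point-vortex model with circular orbits of period $\mc T_E$, together with standard ODE arguments --- which is precisely your scheme (the $O(1/\alpha)$ drift vanishing uniformly away from the origin, $C^*=(2\alpha a/\rme)^2\to+\infty$ so that $C_E<C^*$ automatically, Gr\"onwall closeness of orbits on $[0,T]$, and continuity of the period via the transit-time integral with speed bounded below on the annulus). One minor tightening worth noting: the a priori confinement of $x_E(t)$ in the annulus (and in fact the bound $\min_t|x_E(t)|\ge 4\varrho$ for all time, not merely on $[0,T]$) is obtained most directly not from Gr\"onwall but from the exact level-set identity $|x_E(t)|^2=C_E\,\rme^{x_{E,2}(t)/(\alpha a)}$ on the closed component, where $x_{E,2}(t)\in[\eta_1,\eta_2]$ and $\eta_{1,2}\to \mp R_E$ as $\alpha\to+\infty$; this also supplies the initial closeness of $x_E(0)$ to the circle that your comparison argument tacitly uses, and the convergence $T_E\to\mc T_E$ can alternatively be read off the explicit period integral displayed in Section \ref{sec:7} by dominated convergence.
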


In the sequel, we fix a solution $t\mapsto (\zeta^1(t),\zeta^2(t))$ in such a way that $\zeta^1(t) - \zeta^2(t) = x_E(t)$, with $x_E(t)$ as in Lemma \ref{lem:alphagrande}. Therefore, in view of Eq.~\eqref{T1}, if we choose $\varrho$ and $T$ as in the aforementioned lemma then Eq.~\eqref{T} holds in this case for any $\alpha\ge \alpha_0$. Moreover, from the expression of $\dot y$ in Eq.~\eqref{xyeq}, the parameter $\bar d$ in Eq.~\eqref{dbarra} is uniformly bounded for $\alpha\ge \alpha_0$. Taking advantage of this uniformity, we now show that if $\alpha$ is large enough the proof of Theorem \ref{thm:1} can be improved, pushing the time threshold $T_\varrho$ up to $T$, so that at least $2k$ overtakings between the rings take place during the time interval of convergence.

We fix an integer $n \gg 1$ to be specified later and let $\alpha = \alpha_n := \alpha_0 n$. The strategy develops according to the following steps.

\medskip
\noindent
\textit{Step 0.} Letting $\bar T_3$ be as in Proposition \ref{prop:2} with the choice $R = \varrho_n := \varrho/n$, we can argue as done in Proposition \ref{prop:5.1} to deduce that there is $T_0' \in (0,T]$ such that 
\begin{equation}
\label{st1}
\Lambda_{i,\eps}(t) \subset \Sigma(B^{i,\eps}(t)|3\varrho_n) \quad \forall\, t\in [0, T_0' \wedge T_\eps]\,.
\end{equation}
To this end, we adapt the proof of that proposition by defining, in this case,
\[
t_1 := \sup\{t\in [0,\bar T_3 \wedge T_\eps] \colon R_s \le 3\varrho_n \;\, \forall s \in [0,t]\}\,,
\]
and (whenever $t_1<\bar T_3 \wedge T_\eps$)
\[
t_0 =  \inf\{t\in [0,t_1]\colon R_s > \varrho_n \;\;\forall\, s\in [t,t_1] \}\,.
\]
Therefore, choosing now $R(t_0)= 2\varrho_n$ in Eq.~\eqref{stimrbis}, from Eq.~\eqref{rt12} we deduce that Eq.~\eqref{st1} holds with
\[
T_0' = \frac{1}{2C_F} \log\left(\frac{6C_F\alpha_n\varrho_n + |a|}{4C_F\alpha_n\varrho_n + |a|} \right) \wedge \bar T_3 = \frac{1}{2C_F} \log\left(\frac{6C_F\alpha_0\varrho + |a|}{4C_F\alpha_0\varrho + |a|} \right) \wedge \bar T_3\,,
\]
where, in view of Remark \ref{rem:4.1},
\[
\bar T_3 = \frac{\varrho_n\,\rme^{-4}}{4C_W} \left(C_F \varrho_n +\frac{|a|}{\pi\alpha_n}\right)^{-1} \wedge T = \frac{\varrho\,\rme^{-4}}{4C_W} \left(C_F \varrho +\frac{|a|}{\pi\alpha_0}\right)^{-1} \wedge T\,.
\]

\medskip
\noindent
\textit{Step 1.} If $T_0'=T$ we are done, otherwise, from Step 0 and Eqs.~\eqref{secl}, \eqref{pril} we have $T_\eps>T_0'$ for any $\eps$ small enough, and whence
\[
\Lambda_{i,\eps}(T_0') \subset \Sigma(B^{i,\eps}(T_0')|3\varrho_n)\,.
\]
Then, we can adapt to the present context the arguments of Propositions \ref{prop:1} and \ref{prop:2} to deduce that, for any $\eps$ small enough,
\begin{equation}
\label{4rhon}
m^i_t(4\varrho_n) \le \eps^{\ell} \quad \forall\, t\in[T_0', (T_0' + \bar T_\ell)  \wedge T_\eps]\,.
\end{equation}
More precisely:

(i) We follow the proof of Proposition \ref{prop:1}, with $R-\varrho_n/4$ in place of $R/2$ in the computations leading to Eq.~\eqref{2mass 4''}, and iterate Eq.~\eqref{mass 14'} from $3\varrho_n + \varrho_n/2 -h$ to $3\varrho_n + \varrho_n/4$, getting in this way $m^i_t(3\varrho_n+\varrho_n/2) \le |\log\eps|^{-2}$ for $t\in[0, (T_0' + \widetilde T_2)  \wedge T_\eps]$. Moreover, since in this case $R = 3\varrho_n + \varrho_n/2$ and $h=\varrho_n/(4n+4)$,
\[
\widetilde T_2 =  \frac{\varrho_n\rme^{-3}}{8C_W} \left(C_F \left(3\varrho_n+\frac{\rho_n}2\right) +\frac{|a|}{\pi\alpha_n}\right)^{-1} \wedge (T-T_0')\,.
\]

(ii) Using (i), we can adapt the proof of Proposition \ref{prop:2}, iterating now from $4\varrho_n -h$ to $4\varrho_n - \varrho_n/4$. Recalling Remark \ref{rem:4.1} (adapted to the present context, in particular with $\widetilde T_2$ as above), Eq.~\eqref{4rhon} for $\ell>2$ thus holds with
\[
\begin{split}
\bar T_\ell & =  \frac{\varrho_n\rme^{- \ell -1}}{8C_W} \left(C_F 4\varrho_n +\frac{|a|}{\pi\alpha_n}\right)^{-1} \wedge (T-T_0') \\ & = \frac{\varrho\rme^{- \ell -1}}{8C_W} \left(C_F 4\varrho +\frac{|a|}{\pi\alpha_0}\right)^{-1} \wedge  (T-T_0')\,.
\end{split}
\]

Using Eq.~\eqref{4rhon}, we can now adjust the reasoning of Section \ref{sec:5} to prove that, for any $\eps$ small enough,
\begin{equation}
\label{st2}
\Lambda_{i,\eps}(t) \subset \Sigma(B^{i,\eps}(t)|6\varrho_n) \quad \forall\, t\in [T_0', (T_0'+T_1') \wedge T_\eps]\,,
\end{equation}
with $T_1' \in (0,T-T_0']$ as detailed below. More precisely:

(i) We modify the claim of Lemma \ref{lem:5.1} by replacing Eq.~\eqref{stimv} with
\[
\frac{\rmd}{\rmd t} |x(t)- B^{i,\eps}(t)| \le 2 C_F R_t + \frac{|a_i|}{\pi\alpha} +  \frac{6}{\pi|\log\eps|^\gamma (R_t\wedge \varrho_n)^3} + \sqrt{\frac{M m^i_t(R^n_t)}{\eps^2}}\,,
\]
where $R^n_t = (R_t-(\varrho_n/2)) \vee (R_t/2)$. To this end, it is enough to change the proof of Lemma \ref{lem:5.1} by splitting $(K*\omega_{i,\eps})(x,t)$ as in Eq.~\eqref{in A_1,A_2} but choosing now $\mc D=\Sigma(B^{i,\eps}(t)|R^n_t)$ and $\mc A= \Sigma(B^{i,\eps}(t)|R_t)\setminus\Sigma(B^{i,\eps}(t)|R^n_t)$. We omit the details.

(ii) Letting $\bar T_3$ be as in Eq.~\eqref{4rhon} for $\ell=3$, we prove Eq.~\eqref{st2} following the proof of Proposition \ref{prop:5.1} by defining, in this case,
\[
t_1 := \sup\{t\in [T_0',(T_0'+\bar T_3) \wedge T_\eps] \colon R_s \le 6\varrho_n \;\, \forall s \in [0,t]\}\,,
\]
and (whenever $t_1<(T_0'+\bar T_3) \wedge T_\eps$)
\[
t_0 =  \inf\{t\in [T_0',t_1]\colon R_s > 4\varrho_n+\varrho_n/2 \;\;\forall\, s\in [t,t_1] \}\,.
\]
We remark that if $t\in [t_0,t_1]$ then $m^i_t(R^n_t) \le m^i_t(4\varrho_n) \le \eps^3$ by Eq.~\eqref{4rhon}. Therefore, choosing now $R(t_0)= 5\varrho_n$ in Eq.~\eqref{stimrbis}, from Eq.~\eqref{rt12} we deduce that Eq.~\eqref{st2} holds with
\[
T_1' = \frac{1}{2C_F} \log\left(\frac{12C_F\alpha_n\varrho_n + |a|}{10C_F\alpha_n\varrho_n + |a|} \right) \wedge \bar T_3 = \frac{1}{2C_F} \log\left(\frac{12C_F\alpha_0\varrho + |a|}{10C_F\alpha_0\varrho + |a|} \right) \wedge \bar T_3\,,
\]
where
\[
\bar T_3 =  \frac{\varrho\,\rme^{-4}}{8C_W} \left(C_F 4\varrho +\frac{|a|}{\pi\alpha_0}\right)^{-1} \wedge  (T-T_0')\,.
\]

\medskip
\noindent
\textit{Step 2.} If $T_0'+T_1' =T$ we are done, otherwise from Eqs.~\eqref{secl}, \eqref{pril}, and \eqref{st2} we have $T_\eps>T_0'+T_1'$ for any $\eps$ small enough, and whence
\[
\Lambda_{i,\eps}(T_0'+T_1') \subset \Sigma(B^{i,\eps}(T_0'+T_1')|6\varrho_n)\,.
\]
Therefore, analogously to what done in the Step 1, this implies that, for any $\eps$ small enough,
\[
m^i_t(7\varrho_n) \le \eps^{\ell} \quad \forall\, t\in[T_0'+T_1', (T_0' +T_1'+ \bar T_\ell)  \wedge T_\eps]\,,
\]
with
\[
\bar T_\ell = \frac{\varrho\rme^{- \ell -1}}{8C_W} \left(C_F 7\varrho +\frac{|a|}{\pi\alpha_0}\right)^{-1} \wedge  (T-T_0'-T_1')\,,
\]
whence
\[
\Lambda_{i,\eps}(t) \subset \Sigma(B^{i,\eps}(t)|9\varrho_n) \quad \forall\, t\in [T_0'+T_1', (T_0'+T_1'+T_2') \wedge T_\eps]\,,
\]
with
\[
T_2' = \frac{1}{2C_F} \log\left(\frac{18C_F\alpha_n\varrho_n + |a|}{16C_F\alpha_n\varrho_n + |a|} \right) \wedge \bar T_3 = \frac{1}{2C_F} \log\left(\frac{18C_F\alpha_0\varrho + |a|}{16C_F\alpha_0\varrho + |a|} \right) \wedge \bar T_3\,,
\]
and
\[
\bar T_3 =  \frac{\varrho\,\rme^{-4}}{8C_W} \left(C_F 7\varrho +\frac{|a|}{\pi\alpha_0}\right)^{-1} \wedge  (T-T_0'-T_1')\,.
\]

\medskip
\noindent
\textit{Step $j$.}
The above procedure can be iterated inductively in the following manner. If at the end of the $(j-1)$th step we still have $T_0'+ \cdots + T_{j-1}' < T$ (otherwise we are done) and $3j\varrho_n < \varrho$ then $T_\eps>T_0'+ \cdots + T_{j-1}'$ for any $\eps$ small enough, so that
\[
\Lambda_{i,\eps}(T_0'+ \cdots + T_{j-1}') \subset \Sigma(B^{i,\eps}(T_0'+ \cdots + T_{j-1}')|3j \varrho_n)\,,
\]
which allows for a further iteration, giving first
\[
m^i_t((3j+1)\varrho_n) \le \eps^{\ell} \quad \forall\, t\in[T_0' + \cdots + T_{j-1}' , (T_0' + \cdots + T_{j-1}' + \bar T_\ell)  \wedge T_\eps]\,,
\]
with
\[
\bar T_\ell = \frac{\varrho\rme^{- \ell -1}}{8C_W} \left(C_F (3j+1) \varrho +\frac{|a|}{\pi\alpha_0}\right)^{-1} \wedge  [T-(T_0'+\cdots +T_{j-1}')]\,,
\]
and then
\[
\Lambda_{i,\eps}(t) \subset \Sigma(B^{i,\eps}(t)|(3j+3)\varrho_n) \quad \forall\, t\in [T_0'+\cdots +T_{j-1}', (T_0'+ \cdots + T_j') \wedge T_\eps]\,,
\]
with 
\[
T_j' = \frac{1}{2C_F} \log\left(\frac{2(3j+3)C_F\alpha_0\varrho + |a|}{2(3j+2) C_F\alpha_0\varrho + |a|} \right) \wedge \bar T_3
\]
and
\[
\bar T_3 =  \frac{\varrho\,\rme^{-4}}{8C_W} \left(C_F (3j+1) \varrho +\frac{|a|}{\pi\alpha_0}\right)^{-1} \wedge  [T-(T_0'+ \cdots + T_{j-1}')]\,.
\]

\medskip
\noindent
\textit{Conclusion.} The maximum number of possible iterations is given by $j_* = j_n \wedge j_T$, where
\begin{gather*}
j_n = \max\{j\colon 3(j+1)\varrho_n \le \varrho/2\} = \left\lfloor \frac n 6 -1\right\rfloor, \\
j_T = \max\{ 0<j \le j_n \colon T_0'+T_1'+ \ldots + T_{j-1}' < T\}\,.
\end{gather*}
From the explicit expression of $T_j'$, if $j<j_T$ then
\[
T_j' = A_j := \frac{1}{2C_F} \log\left(\frac{2(3j+3)C_F\alpha_0\varrho + |a|}{2(3j+2) C_F\alpha_0\varrho + |a|} \right) \wedge  \frac{\varrho\,\rme^{-4}}{8C_W} \left(C_F (3j+1) \varrho +\frac{|a|}{\pi\alpha_0}\right)^{-1}.
\]
Since $A_j = O(j^{-1})$ for $j$ large, whence $\sum_{j=0}^{j_n} A_j = O(\log j_n) = O(\log n)$, by choosing $n$ (i.e., $\alpha=\alpha_n$) large enough we get $j_*< j_n$, which means $\sum_{j=0}^{j_*} T_j' = T$, i.e., the convergence holds up to the chosen time $T>kT_E$.
\qed

\appendix

\section{Proof of Lemma \ref{lem:Hdec}}
\label{app:a}

Eq.~\eqref{stimH} easily follows from Eqs.~\eqref{H1}, \eqref{H2}, and \eqref{sep-disks}, we omit the details. Concerning the decomposition Eq.~\eqref{sH}, we observe that
\[
\begin{split}
H(x,y) & = - \frac{1}{2\pi(r_\eps+x_2)} I_1\left(\frac{|x-y|}{\sqrt{A}}\right) \frac{(x-y)^\perp }{\sqrt{A}} \\ & \quad + \frac{1}{2\pi(r_\eps+x_2)} I_2\left(\frac{|x-y|}{\sqrt{A}}\right)\sqrt{\frac{r_\eps+y_2}{r_\eps+x_2}} \begin{pmatrix} 1 \\ 0 \end{pmatrix},
\end{split}
\]
where, for any $s>0$,
\[
I_1(s) = \int_0^\pi\!\rmd\theta\, \frac{\cos\theta}{[s^2 + 2(1-\cos\theta)]^{3/2}}\,, \quad I_2(s) = \int_0^\pi\!\rmd\theta\, \frac{1-\cos\theta}{[s^2+2(1-\cos\theta)]^{3/2}}\,.
\]
By an explicit computation, see, e.g., the Appendix in \cite{Mar99}, for any $s>0$,
\[
I_1(s) = \frac{1}{s^2} + \frac 14 \log\frac{s}{1+s} + \frac{c_1(s)}{1+s}, \quad I_2(s) = -\frac 12 \log\frac{s}{1+s} + \frac{c_2(s)}{1+s},
\]
with $c_1(s)$, $c_2(s)$ uniformly bounded for $s\in (0,+\infty)$. Therefore, the  kernel $\mc R(x,y)$ defined by \eqref{sH} is given by
\[
\mc R (x,y) = \sum_{j=1}^6 \mc  R^j(x,y),
\]
with, letting $a = |x-y|/\sqrt A$, 
\begin{align*}
R^1(x,y) & = \frac{1}{2\pi} \bigg(1 - \sqrt{\frac{r_\eps+y_2}{r_\eps+x_2}} \bigg) \frac{(x-y)^\perp }{|x-y|^2}\,, \\ R^2(x,y) & =  \frac{1}{8\pi} \bigg(\log\frac{1+a}{a}\bigg) \frac{(x-y)^\perp }{(r_\eps+x_2)\sqrt{A}}\,, \\ R^3(x,y) & = \frac{1}{4\pi (r_\eps+x_2)} \sqrt{\frac{r_\eps+y_2}{r_\eps+x_2}} \bigg(\log\frac{|x-y|}{1+|x-y|} - \log\frac{a}{1+a}\bigg) \begin{pmatrix} 1 \\ 0 \end{pmatrix}, \\ R^4(x,y) & = \frac{1}{4\pi (r_\eps +x_2)} \bigg(1-\sqrt{\frac{r_\eps+y_2}{r_\eps+x_2}} \bigg) \log\frac{|x-y|}{1+|x-y|} \begin{pmatrix} 1 \\ 0 \end{pmatrix} , \\ R^5(x,y)  & = -\frac{c_1(a)}{2\pi(1+a)} \frac{(x-y)^\perp }{(r_\eps+x_2)\sqrt{A}}\,, \\ R^6(x,y) & = \frac{c_2(a)}{2\pi(1+a)(r_\eps+x_2)}\sqrt{\frac{r_\eps+y_2}{r_\eps+x_2}} \begin{pmatrix} 1 \\ 0 \end{pmatrix}.
\end{align*}
Using that
\[
\bigg|1-\sqrt{\frac{r_\eps+y_2}{r_\eps+x_2}} \bigg| = \frac{|y_2-x_2|}{r_\eps+x_2+\sqrt{A}} \le \frac{|x-y|}{r_\eps+x_2}
\]
and
\[
\bigg|\log\frac{|x-y|}{1+|x-y|} - \log\frac{a}{1+a}\bigg| = \bigg| \log\frac{1+a}{A^{-1/2}+a}\bigg| \le \frac 12 |\log A|,
\]
we have,
\begin{align*}
& |R^1(x,y)| = \frac{1}{2\pi} \bigg|1-\sqrt{\frac{r_\eps+y_2}{r_\eps+x_2}}\bigg| \frac{1}{|x-y|}\le \frac{1}{2\pi (r_\eps+x_2)}, \\
& |R^2(x,y)| = \frac{1}{8\pi (r_\eps+x_2)} \bigg(\log\frac{1+a}{a}\bigg) \frac{|x-y|}{\sqrt{A}} \le \frac{1}{8\pi (r_\eps+x_2)} \, \sup_{s>0}\bigg( s\log\frac{1+s}{s}\bigg), \\
& |R^3(x,y)| + |R^6(x,y)| \le  \frac{1}{4\pi (r_\eps+x_2)} \sqrt{\frac{r_\eps+y_2}{r_\eps+x_2}} \bigg(|\log A| + \sup_{s>0} \frac{2c_2(s)}{1+s}\bigg), \\
& |R^4(x,y)| = \frac{1}{4\pi (r_\eps+x_2)} \bigg|1 - \sqrt{\frac{r_\eps+y_2}{r_\eps+x_2}} \bigg| \log\frac{1+|x-y|}{|x-y|} \\ & \hskip1.5cm \le \frac{1}{4\pi (r_\eps+x_2)^2} \,\sup_{s>0}\bigg( s\log\frac{1+s}{s}\bigg), \\
& |R^5(x,y)| = \frac{|c_1(a)|}{2\pi (1+a)} \frac{|x-y|}{(r_\eps+x_2)\sqrt A} \le \frac{1}{2\pi (r_\eps+x_2)} \, \sup_{s>0}\frac{s c_1(s)}{1+s}.
\end{align*}
In conclusion,
\[
\begin{split}
& |R^1(x,y)| + |R^2(x,y)| + |R^5(x,y)| \le \frac{C}{r_\eps+x_2}, \quad |R^4(x,y)| \le \frac{C}{(r_\eps+x_2)^2}, \\ & |R^3(x,y)| + |R^6(x,y)| \le  \frac{C}{r_\eps+x_2} \sqrt{\frac{r_\eps+y_2}{r_\eps+x_2}} \bigg(1+|\log A|\bigg).
\end{split}
\]
The lemma is thus proven.
\qed

\end{document}